\documentclass{lmcs}

\usepackage[T1]{fontenc}

\usepackage{hyperref}
\usepackage[all]{xy}
\usepackage{amsmath}
\usepackage{amssymb}
\usepackage{graphicx}
\usepackage{xspace}
\usepackage{subcaption}
\usepackage{bm}
\usepackage{faktor}
\usepackage[dvipsnames]{xcolor}

\newcommand{\cplxclass}[1]{{\sf #1}\xspace}
\newcommand{\PTime}{\cplxclass{P}}
\newcommand{\NP}{\cplxclass{NP}}
\newcommand{\PSPACE}{\cplxclass{PSPACE}}
\newcommand{\NPSPACE}{\cplxclass{NPSPACE}}
\newcommand{\NEXP}{\cplxclass{NEXP}}

\newcommand{\probname}[2][black]{\color{#1}\textsc{#2}}
\newcommand{\probref}[2][black]{\hyperref[prob:#2]{\probname[#1]{#2}}}

\newcommand{\atoms}{\mathbb{A}}
\newcommand{\id}{\mathrm{id}}
\DeclareMathOperator{\aut}{Aut}
\DeclareMathOperator{\aage}{Age_{\atoms}}
\DeclareMathOperator{\supp}{supp}

\newcommand{\direction}[1]{{\textsf{#1}}\xspace}
\newcommand{\NN}{\direction{N}}
\renewcommand{\SS}{\direction{S}}
\newcommand{\EE}{\direction{E}}
\newcommand{\WW}{\direction{W}}
\newcommand{\NW}{\direction{NW}}
\newcommand{\NE}{\direction{NE}}
\newcommand{\SE}{\direction{SE}}
\newcommand{\SW}{\direction{SW}}

\newcommand{\Z}{\mathbb{Z}}

\newcommand{\colA}{{\color{red} \sf R}}
\newcommand{\colB}{{\color{green} \sf G}}
\newcommand{\colC}{{\color{blue} \sf B}}

\begin{document}

\title[Karp's NP-complete Problems over FO-Definable Structures]{Karp's NP-Complete Problems over First-Order Definable Structures}
\titlecomment{This is a revised and extended version of the conference paper~\cite{fossacs26}}

\author[A.~Healy]{Aidan Healy\lmcsorcid{0009-0003-1015-9437}}
\author[B.~Klin]{Bartek Klin\lmcsorcid{0000-0001-5793-7425}}

\address{University of Oxford, UK} 
\email{aidan.healy@cs.ox.ac.uk, bartek.klin@cs.ox.ac.uk}

\keywords{Sets with atoms, Nominal sets, NP-complete problems}

\begin{abstract}
We determine the decidability of Karp's NP-complete problems on structures which are first-order definable over the theory of equality, also known as orbit-finite sets with atoms or nominal sets.
\end{abstract}

\maketitle              


\section{Introduction}

We wish to take a range of classical decision problems that are usually considered for finite structures, and study them on the class of structures which are infinite but definable by first-order formulas that use equality only. Precise definitions will follow, but let us begin with two illustrative examples. Here and in the following, fix a countably infinite set $\atoms$, whose elements we call atoms.

\begin{exa}\label{ex:1}
Consider $X={\atoms\choose 2}$, the set of two-element sets of atoms, and the family $\mathcal{S}$ of all three-element subsets of $X$ of the form:
\[
	\{ \{a,b\}, \{a,c\}, \{b,c\} \} \qquad \text{for all distinct}\ a,b,c\in \atoms.
\]
This family admits an {\em exact cover}: there is a sub-family of $\mathcal{S}$ that forms a partition of $X$. However, to find such a cover we must break the pleasant symmetry of $\mathcal{S}$. For example, we may fix an enumeration of $X$, and proceed by induction starting with the empty family: in each step, take the first $\{a,b\}$ that has not been covered yet, choose some $c$ such that neither $\{a,c\}$ nor $\{b,c\}$ have been covered, and add this to the family. The limit of this process is an exact cover.
\end{exa}

\begin{exa}\label{ex:2}
Now let $X$ be the disjoint union of $\atoms\choose 2$ and $\atoms$. The family $\mathcal{S}$ of all subsets of $X$ of the form:
\[
	\{ \{a,b\}, \{a,c\}, a \} \qquad \text{for all distinct}\ a,b,c\in \atoms,
\]
does {\em not} admit an exact cover of $X$. To see why, notice that to cover a pair $\{a,b\}$ we must include a set that contains either $a$ or $b$. Any atom in $\atoms$ can be included only once in this way, so it can be used to cover only two pairs. More generally, $k$ atoms can be used to cover only $2k$ pairs. For $k=6$, this is not enough to cover all the ${6\choose 2}=15$ pairs that are built of the $k$ atoms and need to be covered. 
\end{exa}

Both these arguments are quite simple, but substantial enough to make one wonder whether or not the classical problem \probref{ExactCover} is decidable for structures of this kind. Indeed, one of our main results is that it is not.

By ``structures of this kind'' we mean structures (graphs, hypergraphs, formulas, families...) where all components and relations are defined by first-order formulas that only compare atoms for equality. Such structures are usually infinite, but they are finite up to bijective renaming of atoms. We will find it convenient to describe them within the framework of {\em sets with atoms}, but in the parlance of model theory (see e.g.~\cite{hodges}), they are simply relational structures which are first-order interpretable in $(\atoms,=)$.

The purpose of this paper is to determine the decidability, over such structures, of appropriately extended versions of the classical \NP-complete problems listed by Karp in 1972~\cite{Karp72}.

The idea of transporting computational problems from finite to infinite structures has been explored in several variants, with the main difference being the class of infinite structures considered. The broadest setting is that of {\em recursive structures}~\cite{HH96,HL96}, where nodes are natural numbers and arbitrary decidable relations are permitted. Of course, in this setting all nontrivial questions become undecidable, and the focus is on determining where particular problems lie in the arithmetical hierarchy. 

A smaller class is that of {\em automatic structures}~\cite{KN95,BG00,Gra20}, where nodes are represented as words over a finite alphabet, and relations are recognisable by multi-tape finite-state automata that read those words in parallel. For this class, the model checking problem for first-order logic extended with a limited form of second-order quantification is decidable~\cite{KL10}. As a result, natural extensions of a few of Karp's problems become decidable~\cite{KL10,Koc14}, including \probref{Clique}, \probref{SetPacking} and a variant of~\probref{SetCovering}. Several other problems remain undecidable, including \probref{Hamiltonicity}
and \probref{ExactCover}~\cite{KL10,Koc14}. Some problems that are polynomial-time decidable in the finite case, including \probname{$2$-Sat}, \probname{$2$-colorability}~\cite{Koc14}, and checking whether a graph is connected~\cite{BG04}, are undecidable on automatic structures.

Another restricted class is that of {\em doubly periodic structures}~\cite{Bur84}. These are constructed by placing infinitely many copies of a fixed finite structure on a two-dimensional grid, and imposing additional relations on nodes from neighbouring copies only, in a periodic manner. On this class, \probname{$k$-Sat} and \probname{$k$-Colorability} are decidable for $k=2$ and undecidable for $k>2$~\cite{Fre98}, and one could hope that this could be a setting where the \PTime vs.~\NP gap is blown up to the gap between decidable and undecidable problems. However, all doubly periodic structures are automatic, so decidability results mentioned above hold here as well.

As we said above, our focus is on structures which are first-order interpretable in a pure set. In the following we will simply call such structures {\em definable}. 
All such structures are automatic, since the class of automatic structures is closed under first-order interpretations~\cite{BG00}. They are incomparable with doubly periodic ones. (For example, an infinite clique is definable but not doubly periodic, and an infinite square grid is doubly periodic but not definable.) 

A few classical decision problems have been studied in the framework of definable structures. In~\cite{KKOT15}, it was proved that every Constraint Satisfaction Problem for a fixed finite template is decidable in this setting. This includes \probname{$k$-Sat} and \probname{$k$-colorability}, for every $k$. On the other hand, checking whether there exists a homomorphism from one given definable structure to another is undecidable~\cite{KLOT16}. For solvability of systems of linear equations,~\cite{KKOT15} shows decidability over finite fields and where every equation contains finitely many variables. Both these assumptions are dropped in~\cite{GHL22}, at the price of searching for definable solutions only. In~\cite{GHL25}, linear programming over definable structures is shown to be decidable, and integer linear programming undecidable, again under the assumption that only definable solutions are considered.

The decidability landscape of Karp's \NP-complete problems on definable structures turns out to be surprisingly varied. Out of the original 21 problems:
\begin{itemize}
\item Seven are undecidable, including \probref{CNFSat}, \probref{ExactCover}, and \probref{Hamiltonicity}. We show this by a sequence of reductions, starting from a reduction of the Wang tiling problem to~\probref{ExactCover}. 
\item Eleven are decidable. Two of these, \probref{3-Sat} and \probref{Colorability}, are known from~\cite{KKOT15}, and we generalise the technique used there to cover a few more, including \probref{VertexCover}. Decidability of  \probref{Clique} and \probref{SetPacking} is known for automatic structures (see above), but for definable structures we provide more direct arguments.
\item Three problems essentially rely on adding up unboundedly many numbers, and we see no natural way to extend them to an infinite setting. 
\end{itemize}
We also investigate the complexity of the decidable problems. Under a specific input representation chosen here, they turn out to be either \PSPACE-complete or \NEXP-complete.


\section{Preliminaries: definable structures}\label{sec:prelims}

We assume general familiarity with the classic paper~\cite{Karp72} and the basic concepts discussed there such as graphs, cliques, formulas etc. We will recall along the way the 21 computational problems studied there. In this preliminary section we focus on {\em definable structures}, which we will use as instances for those problems. We will work in the framework of {\em sets with atoms}~\cite{atom-book}, also called {\em nominal sets}~\cite{pitts}. 
Our presentation follows~\cite{KKOT15,KLOT16}; see there and~\cite[Chap.~10]{atom-book} for more details.

Let $\atoms$ be a countably infinite set of {\em atoms}. We want to define sets that are somehow built of atoms and are perhaps infinite, but presented in a finite way and highly symmetric under bijective atom renaming. To this end, given some fixed infinite set of atom variables,
an {\em expression} is either a variable or a formal finite (perhaps empty) union of set-builder expressions of the form
\[
	\{ e \mid x_1,\ldots, x_k\in\atoms,\ \varphi \}
\]
where $e$ is an expression (where the $x_i$ as well as other variables may occur), the $x_i$ are bound variables, and $\varphi$ is a first-order formula over the set of atom variables, with equality as the only relation symbol. Free variables in this expression are those free variables in $e$ and $\varphi$ which are not among the $x_1,\ldots, x_k$. We can, optionally, make free variables explicit by writing them in brackets in the usual way. For instance:
\[
	\{ e(\bar{x},\bar{y}) \mid \bar{x}\in\atoms,\ \varphi(\bar{x},\bar{y}) \},
\]
is another way of writing the previous expression. Here, $\bar{x}$, $\bar{y}$ denote tuples of variables.

For an expression $e$ with free variables $V$, any valuation $\sigma:V\to \atoms$ defines a value $X=e[\sigma]$ in an obvious way, by induction on the structure of $e$. This value is either an atom (if $e$ is merely a variable) or a set. We will call $X$ a {\em definable set with atoms}. If the image of $\sigma$ is some (necessarily finite, and perhaps empty if $e$ has no free variables) $S\subseteq\atoms$, then we may say that $X$ is {\em $S$-definable}. $\emptyset$-definable sets are called {\em equivariant}. For example, the equivariant set $\atoms\choose 2$ from Example~\ref{ex:1} is formally defined by the expression
\[
	X = \{\{x\}\cup\{y\} \mid x,y\in\atoms, x\neq y\},
\]
where the subexpressions $\{x\}$ and $\{y\}$ use a little syntactic sugar: the empty list of bound variables, as well as a formula $\varphi$ which is always true, can be elided. One can simplify descriptions of finite sets a bit further to write e.g.~$\{x,y\}$ for $\{x\}\cup\{y\}$, which makes a formal expression for $\mathcal{S}$ in~Example~\ref{ex:1} look quite like the informal one given there:
\[
	\mathcal{S} = \left\{ \bigl\{ \{x,y\}, \{x,z\}, \{y,z\} \bigr\} \mid x,y,z\in\atoms,\ x\neq y\land x\neq z\land y\neq z \right\}.
\]

The above language of expressions is rudimentary, but using standard set-theoretic machinery it is easy to extend it with pairs, tuples, integers, constants from some given finite sets, specific atoms, etc. For example, an ordered pair $(x,y)$ can be formally defined as the Kuratowski pair $\{x,\{x,y\}\}$, and any expression that uses a specific atom $a\in\atoms$ is shorthand for the same expression where a fixed fresh variable $x$ is used everywhere instead of~$a$, together with a valuation $\sigma$ that maps $x$ to $a$. We will use such syntactic sugar without further warning. In particular, in Examples~\ref{ex:1}-\ref{ex:2}, not only the sets $X$ and $\mathcal{S}$ but also the pair $(X,\mathcal{S})$ is definable (in fact, equivariant).

Furthermore (see~\cite[Chap.~10]{atom-book} for a detailed discussion), it is routine to prove by induction on the structure of expressions that definable sets are closed under Boolean combinations, Cartesian products, images and inverse images of definable functions, quotients under definable equivalence relations, and intersections and unions of definable families, and all these constructions are effectively computable as operations on the defining expressions. Also relations such as set equality and membership between definable sets are decidable. Set-builder expressions can also be safely extended with more syntactic sugar by allowing bound variables to range not only over $\atoms$ but over any definable set, and allowing in $\varphi$ set relations $\in$ and $\subseteq$ (in addition to atom equality) and quantifiers of the form $\exists x\in X$ and $\forall x\in X$, where $X$ is a set defined by an expression. For example, for $(X,\mathcal{S})$ as in Examples~\ref{ex:1}-\ref{ex:2}, the sets
\[
	M = \{(x,Y)\mid x\in X, Y\in\mathcal{S}, x\in Y\} \qquad \text{and} \qquad  \mathcal{F} = \{\{Y\in \mathcal{S}\mid x\in Y\} \mid x\in X\}
\]
are definable (and equivariant). Specifically, for Example~\ref{ex:1}, the corresponding formal definitions (still with just a sprinkle of syntactic sugar introduced before) are:
\begin{align*}
M &= \left\{ \left(\{x,y\}, \bigl\{ \{x,y\}, \{x,z\}, \{y,z\} \bigr\}\right) \mid x,y,z\in\atoms,\ x\neq y\land x\neq z\land y\neq z \right\}, \\
\mathcal{F} &= \left\{ \left\{ \bigl\{ \{x,y\}, \{x,z\}, \{y,z\} \bigr\} \mid z\in\atoms, z\neq x \land z\neq y \right\} \mid x,y\in\atoms, x\neq y \right\}.
\end{align*}
In the following we will use the extended syntax of expressions liberally.

Definable structures are highly symmetric. The group $\aut(\atoms)$ of {\em atom automorphisms} (meaning: arbitrary bijections on $\atoms$) has a canonical action on the class of definable sets: for a definable set $X=e[\sigma]$ and $\pi\in\aut(\atoms)$, define $X\cdot\pi = e[\sigma;\pi]$, where $\sigma;\pi$ denotes function composition: $(\sigma;\pi)(v) = \pi(\sigma(v))$. This amounts to consistently renaming all the atoms throughout $X$ according to $\pi$. For a finite $S\subseteq\atoms$, a $\pi\in\aut(\atoms)$ is called an {\em $S$-automorphism} if $\pi(a)=a$ for all $a\in S$. We say that $S$ {\em supports} a definable set $X$ if $X\cdot\pi=X$ for every $S$-automorphism $\pi$. It is easy to see that every $S$-definable set is supported by $S$, so every definable set has a finite support. Finite supports of a given set are closed under intersection (see~\cite[Prop.~2.3]{pitts} or~\cite[Thm.~4.13]{atom-book}), so every definable set $X$ has a {\em least support}, denoted $\supp(X)$.

We say that sets $X$ and $Y$ are $S$-equivalent if there is an $S$-automorphism $\pi$ such that $X\cdot\pi=Y$. This is an equivalence relation, and its equivalence classes are called {\em $S$-orbits}, or simply {\em orbits} if $S=\emptyset$. Every definable set $X$ is {\em orbit-finite}, i.e., it is a finite union of $\supp(X)$-orbits. For example:
\begin{itemize}
\item $\atoms$ is a single-orbit set. For any finite $S\subseteq \atoms$, the set $S$ has $|S|$ $S$-orbits (every element of $S$ is a singleton orbit), and $\atoms\setminus S$ has one $S$-orbit.
\item For any $k$, the sets $\atoms^{(k)}$ (of non-repeating $k$-tuples) and $\atoms\choose k$ (of sets of size~$k$) are single-orbit sets. More generally, for any finite permutation group $G\leq\textrm{Sym}(k)$, the set $\atoms^{(k)}/G$ of non-repeating $k$-tuples of atoms up to permutations from $G$, is a single-orbit set.
\item The set $\atoms^2$ (of possibly repeating pairs) consists of two orbits: $\atoms^{(2)}\subseteq \atoms^2$ and its complement. More generally, $\atoms^k$ has number of orbits equal to the $k$-th Bell number.
\item The set of finite subsets of $\atoms$ is not orbit-finite, as sets of different sizes fall into different orbits; therefore this set is not definable.
\end{itemize}

There is an alternative but equivalent way to introduce definable sets, where actions of $\aut(\atoms)$ and finite supports are the basic concepts, with orbit-finiteness imposed as an additional condition. It then becomes a representation theorem that every $S$-supported orbit-finite set is (in an $S$-suppported bijection with) an $S$-definable set. This approach is taken in~\cite{atom-book,BFKM24,BKL14}. Other representations exist: as shown in~\cite{BKL14}, every equivariant single-orbit set is in equivariant bijection with a set of the form $\atoms^{(k)}/G$ as mentioned above. This implies that definable structures are first-order interpretable (in the sense of model theory) in the pure set $(\atoms,=)$.

So far we have focused on {\em equality atoms}, where $\atoms$ is a pure set, without any structure imposed on the atoms. This is our main subject of study here, and in the following we will study Karp's problems only on structures definable over equality atoms. However, in Section~\ref{sec:amenability}, as in~\cite{KKOT15}, we will need to make a brief excursion to a richer structure of {\em ordered atoms}, where $\atoms=(\mathbb{Q},\leq)$ is the total order of rationals, with $\aut(\atoms)$ restricted to order-preserving bijections. This extends the language of formulas $\varphi$ in set-builder expressions with a binary order relation $\leq$.  The notions of definability, support and orbit-finiteness are defined as for equality atoms, and the representation theorems mentioned above work analogously. The general framework of sets over any relational structure of atoms is studied in detail in~\cite{BKL14,atom-book}.


\section{Undecidable problems}

We will list Karp's problems that become undecidable over definable structures.

\subsection{Exact cover}

\begin{prob}[\probname{ExactCover}]\label{prob:ExactCover}
\begin{description}
\item[Input] Definable set $X$, definable set $\mathcal{S}$ of subsets of $X$
\item[Question] Is there a pairwise-disjoint subset of $\mathcal{S}$ whose union is $X$?
\end{description}
\end{prob}

In the classical setting, where both $X$ and $\mathcal{S}$ are finite, Karp~\cite{Karp72} proved \NP-hardness of this problem by reduction from graph colorability. Looking for a similar reduction here would be pointless, since colorability of definable graphs is decidable (see~\cite[Thm.~3]{KKOT15} or Theorem~\ref{thm:colorability} below). Instead, we prove undecidability by a reduction from the well-known domino tiling problem, posed by Wang~\cite{Wang61} and shown undecidable by Berger~\cite{Berger66}. 

To describe that problem, fix constants \NN, \SS, \EE and \WW. For a finite set $C$ of colors, a~{\em tile} is a function $t:\{\NN,\SS,\EE,\WW\}\to C$, and for a set $T$ of such tiles, a {\em tiling} of the plane is a~function $\tau:\Z^2\to T$ such that, for all $a,b\in\Z$:
\[
	\tau(a,b)(\NN) = \tau(a,b+1)(\SS) \qquad \text{and} \qquad \tau(a,b)(\EE) = \tau(a+1,b)(\WW).
\]

\medskip

\begin{prob}[\probname{Domino}]\label{prob:Domino}
\begin{description}
\item[Input] A finite set $C$ of colors, a finite set $T$ of tiles over $C$
\item[Question] Is there a tiling with $T$?
\end{description}
\end{prob}

We will reduce~\probref{Domino} to~\probref{ExactCover}. Given an input $(C,T)$, we will construct definable $X$ and $\mathcal{S}$ such that an exact cover of $(X,\mathcal{S})$ exists if and only if a tiling of the plane does.

For some intuition, imagine an infinite clique with all atoms as vertices. The set $X$ will contain four elements for each atom (the {\em vertex elements}), and a few elements for each pair of distinct atoms (the {\em edge elements}). The family $\mathcal{S}$ will contain two kinds of (finite) subsets of $X$. An {\em edge set} will contain all the edge elements associated to a specific pair of atoms. A \emph{tile set}, defined over a quadruple of distinct atoms $a,b,c,d$, will contain some of the vertex elements associated to $a,b,c$ and $d$, as well as some edge elements associated to the pairs $(a,b), (b,c), (c,d)$ and $(d,a)$. We can consider it associated with the quadrilateral $a,b,c,d$.

Only tile sets contain vertex elements, so an exact cover in $\mathcal{S}$ must use infinitely many of them. Each tile set will only partially fill each of its four edges (i.e.~it will not include an entire edge set), so in an exact cover each tile set must match other tile sets complementing it on its edges, in a way that can be unrolled to a tiling of the infinite square grid. Edge sets are used to cover all the edge elements that do not correspond to edges in that grid.

We shall now make these ideas precise.

\begin{thm}\label{thm:undecidability-exact-cover}
\probref{ExactCover} is undecidable.
\end{thm}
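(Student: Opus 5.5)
We reduce \probref{Domino} to \probref{ExactCover}. Given $(C,T)$ we build an equivariant instance $(X,\mathcal{S})$, computably from $(C,T)$, such that an exact cover exists iff a tiling with $T$ exists. Each atom plays the role of a grid \emph{corner}. A tile placed on a square of the grid is a set indexed by its four distinct corners $a,b,c,d$ (in the order SW, SE, NE, NW) and a tile $t\in T$.

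\textbf{Vertex elements.} For each atom $x$ we put $(x,p)$ into $X$ for each role $p\in\{\SW,\SE,\NE,\NW\}$.

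\textbf{Edge elements.} For each ordered pair $(x,y)\in\atoms^{(2)}$ we add the elements $(x,y,o,k,s)$, where $o\in\{H,V\}$ is an orientation, $k\in C$ is a color and $s\in\{0,1\}$ is a side. Write $B_{\{x,y\}}$ for the block of all edge elements over $(x,y)$ or $(y,x)$.

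\textbf{Edge sets.} For each unordered pair $\{x,y\}$, the set $B_{\{x,y\}}$ belongs to $\mathcal{S}$.

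\textbf{Encoding a shared grid edge.} Consider a horizontal grid edge from $x$ (left) to $y$ (right), shared by the tile above it, with south color $c$, and the tile below it, with north color $c'$.
\begin{itemize}
\item The upper tile contributes $\{(x,y,H,c,0)\}\cup\{(x,y,H,k,1)\mid k\neq c\}$. It also contributes all of $(x,y,V,\ast,\ast)$ and $(y,x,\ast,\ast,\ast)$, the ``filler'' part of the block.
\item The lower tile contributes $\{(x,y,H,c',1)\}\cup\{(x,y,H,k,0)\mid k\neq c'\}$.
\end{itemize}
These two contributions are disjoint and together cover $B_{\{x,y\}}$ exactly iff $c=c'$; if $c\neq c'$ they overlap. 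Vertical edges from $x$ (bottom) to $y$ (top) are treated in the same way with $o=V$. There the right tile (matching its $\WW$ color) carries the filler, and the left tile matches its $\EE$ color.

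\textbf{Tile sets.} For distinct $a,b,c,d$ and $t\in T$, the tile set contains:
\begin{itemize}
\item the vertex elements $(a,\SW),(b,\SE),(c,\NE),(d,\NW)$;
\item its bottom-edge contribution on $(a,b)$ as the upper tile with color $t(\SS)$;
\item its top-edge contribution on $(d,c)$ as the lower tile with color $t(\NN)$;
\item its left-edge contribution on $(a,d)$ as the right tile with color $t(\WW)$;
\item its right-edge contribution on $(b,c)$ as the left tile with color $t(\EE)$.
\end{itemize}
All of this is clearly expressible by set-builder expressions, so the instance is definable.

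\textbf{Tiling $\Rightarrow$ cover.} Fix a bijection $\beta:\Z^2\to\atoms$. For every $(i,j)$, take the tile set on $\beta(i,j),\beta(i{+}1,j),\beta(i{+}1,j{+}1),\beta(i,j{+}1)$ with tile $\tau(i,j)$. Every vertex element is then covered exactly once, since each point is exactly once a SW, SE, NE and NW corner. Every block $B_{\{x,y\}}$ with $\{x,y\}$ a grid edge is covered exactly once by the two adjacent tiles, because the tiling constraints give the color equality. Finally, add the edge sets for all other pairs.

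\textbf{Cover $\Rightarrow$ tiling.} This is the main step. Since each $(x,\SW)$ is covered exactly once, each atom $x$ is the SW corner of exactly one chosen tile set. Define $R(x)$ as its SE corner and $U(x)$ as its NW corner.

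\emph{The block $B_{\{x,R(x)\}}$ is covered by tiles only.} It meets this tile set, so it is not covered by an edge set. Its remaining elements must be covered by tile sets only, and inspection shows the only tile sets touching $(x,R(x),H,\ast,\ast)$ are those with bottom edge $(x,R(x))$ or top edge $(x,R(x))$. The former is unique (it is the tile with SW corner $x$), so exactly one chosen tile has top edge $(x,R(x))$, and its $\NN$ color equals our tile's $\SS$ color.

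One must also check the edge cases. A pair could be used both horizontally and vertically, or in both directions. This is excluded because the filler parts would then overlap, and I would verify this carefully.

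\emph{$R$ and $U$ commute.} The top edge $(d,c)$ of the tile with SW corner $x$ must be the bottom edge of some chosen tile. That tile then has SW corner $d=U(x)$, by uniqueness of SW roles, so $c=R(U(x))$. Symmetrically, using the right edge, $c=U(R(x))$.

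\emph{$R$ and $U$ are bijections.} They are injective because each atom is an SE (respectively NW) corner of exactly one tile, and surjective by the same count.

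\emph{Reading off the tiling.} Fix $a_0$ and set $\tau(i,j)$ to be the tile type of the chosen set with SW corner $R^iU^j(a_0)$. The matching established above gives exactly the two tiling conditions. Non-injectivity of $(i,j)\mapsto R^iU^j(a_0)$ (periodicity) is harmless.

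Undecidability then follows from Berger's theorem. The step I expect to need most care is the case analysis showing that the block of a pair used by a tile can only be completed by the unique complementary tile in the intended orientation, which forces the grid structure.
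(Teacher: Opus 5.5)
Your proposal is correct and follows essentially the same route as the paper. Both reduce from \probref{Domino} using four vertex elements per atom, whole-pair edge sets, and tile sets on quadruples of distinct atoms whose partial edge contributions are complementary exactly when colours agree; both read a tiling back from neighbour maps, and build a cover from a tiling via a bijection $\Z^2\to\atoms$. The paper forces orientation with tags $1$--$4$ and arrows $\rightarrow_{(a,b)}$ instead of your $H/V$ labels, side bits and one-sided filler, and it uses maps $n,e,s,w$ on chosen tile sets instead of your $R,U$ on atoms.

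The case check you defer does go through: any chosen tile set meeting $B_{\{x,R(x)\}}$ other than via top edge $(x,R(x))$ overlaps the bottom tile, through its own filler or that tile's filler. Your commutation step then follows directly from this block lemma together with uniqueness of NW and SE corners.
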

\begin{proof}
By reduction from \probref{Domino}. Given as input finite sets $C$ of colors and $T$ of tiles, we shall construct an instance $(X,\mathcal{S})$ of \probref{ExactCover}. Define:
\begin{equation*}
\begin{array}{ll}
X =& \hspace{16pt}\displaystyle \bigcup_{a \in \atoms}\ {\{\NW_a, \NE_a, \SE_a, \SW_a\}} \hspace{117pt} \Bigr\}\ \text{\small vertex elements} \\[15pt]
   &\hspace*{-10pt}
   \scalebox{0.75}[1]{$\displaystyle \left.
   \scalebox{1.33}[1]{$\displaystyle \begin{array}{l}
   \displaystyle  \cup  \bigcup_{(a,b) \in \atoms^{(2)}}{\{\rightarrow_{(a,b)}\}} \\[18pt]
    \displaystyle \cup  \bigcup_{\{a,b\} \in \binom{\atoms}{2}}{\{1_{\{a,b\}}, 2_{\{a,b\}}, 3_{\{a,b\}}, 4_{\{a,b\}}\} \cup \{c_{\{a,b\}} : c \in C\}}.
    \end{array}$}
     \right\}$}\ \text{\small edge elements}
\end{array}
\end{equation*}
Symbols $\NW,\NE,\SE,\SW,\rightarrow,1,2,3$ and $4$ above are merely tags without any inherent meaning. In a formal expression without syntactic sugar, they would be represented by some fixed and distinct finite sets that do not involve any atoms, and an element such as $\NW_a$  for an atom $a\in\atoms$ would be represented as a pair $\{\NW,a\}$. As a result, $X$ is (in an equivariant bijection with) a disjoint union of four copies of the set $\atoms$, one copy of $\atoms^{(2)}$ and $|C|+4$ copies of $\atoms\choose 2$. Note that $X$ depends on $C$ but not on $T$. 

To define $\mathcal{S}$, first for any $\{a,b\} \in \binom{\atoms}{2}$ define the \emph{edge set} $e_{\{a,b\}}$ by:
\[
e_{\{a,b\}} = \{1_{\{a,b\}}, 2_{\{a,b\}}, 3_{\{a,b\}}, 4_{\{a,b\}}, \rightarrow_{(a,b)}, \rightarrow_{(b,a)}\} \cup \{c_{\{a,b\}} : c \in C\}.
\]
This set, with $|C|+6$ edge elements, contains all the elements of $X$ associated with $\{a,b\}$, $(a,b)$ and $(b,a)$.

Furthermore, for any tile $t\in T$ and atoms $(a,b,c,d)\in\atoms^{(4)}$, define the \emph{tile set} $t_{(a,b,c,d)}$ by:
\begin{align*}
    t_{(a,b,c,d)} = \{\SE_a,& \SW_b, \NW_c, \NE_d\}  \cup {\{\rightarrow_{(a,b)}, \rightarrow_{(b,c)}, \rightarrow_{(c,d)}, \rightarrow_{(d,a)}\}} \\
    & \cup {\{1_{\{a,b\}}, 2_{\{a,b\}},1_{\{b,c\}}, 3_{\{b,c\}},3_{\{c,d\}}, 4_{\{c,d\}},2_{\{d,a\}}, 4_{\{d,a\}}\}} \\
    &\cup \{c_{\{a,b\}} : c \in C \wedge t(\NN) = c\} \cup \{c_{\{b,c\}} : c \in C \wedge t(\EE) = c\}\\
    &\cup \{c_{\{c,d\}} : c \in C \wedge t(\SS) \neq c\} \cup \{c_{\{d,a\}} : c \in C \wedge t(\WW) \neq c\}.
\end{align*}
This set contains $4$ vertex elements associated respectively with atoms $a$, $b$, $c$ and $d$, and $2|C|+12$ edge elements associated with pairs $\{a,b\}$, $\{b,c\}$, $\{c,d\}$ and $\{d,a\}$ (and with their ordered variants). For intuition, the set is best visualised around a square with the atoms $a$, $b$, $c$ and $d$ in its corners; see Figure \ref{fig:tile-set} for an example. 

Let $\mathcal{S}$ contain all the edge and tile sets as above.

\begin{figure}
\centering
\includegraphics[scale=1]{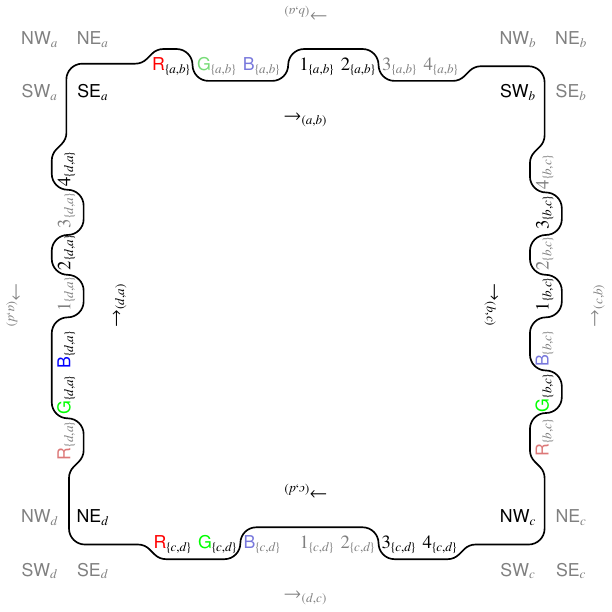}
\caption{The 22-element tile set $t_{(a,b,c,d)}$ for colors $C=\{\colA,\colB,\colC\}$ and a tile $t = (\NN \mapsto \colA, \EE \mapsto \colB, \SS \mapsto \colC, \WW \mapsto \colA)$, with the excluded neighbouring elements marked as gray}
\label{fig:tile-set}
\end{figure}

We shall show that if $(X,\mathcal{S})$ admits an exact cover then $T$ tiles the plane.

Fix an exact cover $E \subseteq \mathcal{S}$. Let $F$ be the set of tile sets in $E$. Since edge sets do not contain any vertex elements, $F$ must be nonempty (in fact, it must be infinite, because there are infinitely many vertex elements to cover, and tile sets are finite). 

So consider any tile set $t_{(a,b,c,d)} \in F$. Note that this set intersects each of the edge sets $e_{\{a,b\}}$, $e_{\{b,c\}}$, $e_{\{c,d\}}$ and $e_{\{d,a\}}$, so neither of these edge sets belongs to $E$.
Since $t_{(a,b,c,d)}$ covers $\rightarrow_{(a,b)}$ but not $\rightarrow_{(b,a)}$, the latter element has to be covered by another set in $E$. Since $e_{\{a,b\}}\not\in E$, that set has to be a tile set; call this unique tile set $n(t_{(a,b,c,d)})$. We can similarly identify tile sets $e(t_{(a,b,c,d)}), s(t_{(a,b,c,d)})$ and $w(t_{(a,b,c,d)})$ in $F$ by looking respectively at the elements $\rightarrow_{(b,c)}$, $\rightarrow_{(c,d)}$ and $\rightarrow_{(d,a)}$ in $t_{(a,b,c,d)}$. Doing this for all tile sets in $F$ defines functions $n,e,s,w: F \rightarrow F$. The names of these functions correspond to four directions on the plane; the intuition is that they will define neighbours in a plane tiling, as shown in Figure~\ref{fig:tile-set-tiling}.

\begin{figure}
\centering
\includegraphics[scale=1.0]{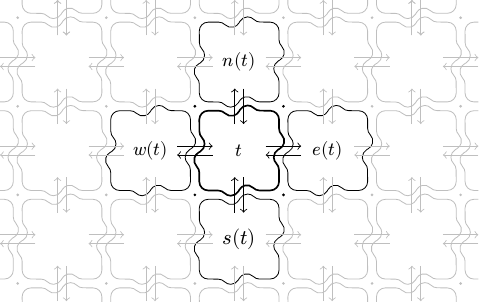}
\caption{Tile sets arranged on a plane}
\label{fig:tile-set-tiling}
\end{figure}

To make this jigsaw-puzzle intuition precise, first note for any $t_{(a,b,c,d)} \in F$ we have $n(t_{(a,b,c,d)}) = t'_{(p,q,b,a)}$ for some tile $t' \in T$ and some atoms $p,q \in \atoms$. Indeed, otherwise $n(t_{(a,b,c,d)})$ would either not contain $\rightarrow_{(b,a)}$, or it would intersect $t_{(a,b,c,d)}$ on at least one of the elements $1_{\{a,b\}}, 2_{\{a,b\}}, 3_{\{a,b\}}, 4_{\{a,b\}}, \rightarrow_{(a,b)}$. Hence $s \circ n = \id$. Similar arguments show $n \circ s = w \circ e = e \circ w = \id$. 

Furthermore, $e(n(t_{(a,b,c,d)}))$ and $n(e(t_{(a,b,c,d)}))$ both contain the vertex element $\NE_b$, so they must be the same tile set. As a result, $n \circ e = e \circ n$. Similarly $n \circ w = w \circ n$, $s \circ e = e \circ s$ and $s \circ w = w \circ s$.

Lastly, note that if $n(t_{(a,b,c,d)})=t'_{(p,q,b,a)}$ then $t(\NN) = t'(\SS)$; otherwise the two tile sets would intersect on $c_{\{a,b\}}$ for $c=t(\NN) \in C$, and neither of them would cover $c_{\{a,b\}}$ for $c=t'(\SS)$. Similarly, if $e(t_{(a,b,c,d)})=t'_{(p,a,d,q)}$ then $t(\EE) = t'(\WW)$.

This is enough to construct a tiling of the plane with tiles from $T$. Indeed, start from any $t_{(a,b,c,d)} \in F$ and define $\phi: \Z^2 \rightarrow F$ by:
\begin{align*}
\phi(0,0) &= t_{(a,b,c,d)} \\
\phi(x,y+1) &= n(\phi(x,y)) \\
\phi(x,y-1) &= s(\phi(x,y)) \\
\phi(x+1,y) &= e(\phi(x,y)) \\
\phi(x-1,y) &= w(\phi(x,y)) 
\end{align*}
for all $x,y \in \Z$. By the equations between the functions $n$, $s$, $e$ and $w$ proved above, this is a well-defined function. Composing $\phi$ with the map that takes each tile set $t_{(a,b,c,d)}$ to its associated tile $t\in T$, we obtain a tiling of $\Z^2$. Note that $\phi$ may not be injective, but this is not a problem. It only implies that the constructed tiling may be periodic.

Next we must show that a tiling gives rise to an exact cover. Fix a tiling $\tau: \Z^2 \rightarrow T$.
Choose a bijection $\alpha: \Z^2 \rightarrow \atoms$ and define:
\begin{align*}
E = \bigl\{&(\tau(x,y))_{(\alpha(x,y+1), \alpha(x+1,y+1), \alpha(x+1,y), \alpha(x,y))} : (x,y) \in \Z^2 \big\}\\
\cup \phantom{|} \big\{&e_{\{a,b\}} : a,b \in \atoms \wedge \lVert \alpha^{-1}(b) - \alpha^{-1}(a) \rVert > 1 \big\},
\end{align*}
where $\lVert \cdot \rVert$ is the Euclidean norm on $\Z^2$.

Intuitively, $\alpha$ arranges the atoms into an infinite grid. The condition on $\lVert \alpha^{-1}(b) - \alpha^{-1}(a)\rVert$ present in the definition of $E$ means simply that atoms $a$ and $b$ are not adjacent in that grid. For every unit square in the grid, $E$ contains the tile set corresponding to the tile $\tau(x,y)$ and to the atoms associated to the four corners of the square. This covers (with pairwise disjoint sets) all vertex elements of $X$, and all edge elements associated to those pairs of atoms which $\alpha$ maps to adjacent points of the grid. The remaining elements of $X$ are all edge elements associated with pairs of atoms not (in this sense) adjacent. These are covered with the edge sets included in $E$.

Finally, $X$ and $\mathcal{S}$ are definable and the construction of $X$ and $\mathcal{S}$ from $C$ and $T$ is effective, so the reduction is complete.
\end{proof}

\begin{rem} \label{rem:exact-cover-with-finite} Note that the family $\mathcal{S}$ in our construction only contains finite sets. Hence \probref{ExactCover} remains undecidable even if we restrict to instances where all sets in $\mathcal{S}$ are finite. On the other hand, we could not impose the dual restriction that every element of $X$ belongs to finitely many sets in $\mathcal{S}$ (in fact, in our construction every element of $X$ belongs to {\em in}finitely many tile sets). Indeed, later we shall see that under that dual restriction the problem ~\probref{ExactCover} becomes decidable.
\end{rem}

For the purposes of further reductions it will sometimes be convenient to assume that the sets in $\mathcal{S}$ have an inbuilt ordering, i.e., that they are (non-repeating) tuples rather than finite sets. Hence consider the following variant of the problem, which will prove convenient in the proofs of Theorems~\ref{thm:undecidability-directed-hamiltonicity} and~\ref{thm:3dmatching}:

\begin{prob}[\probname{ExactTupleCover}]\label{prob:ExactTupleCover}
\begin{description}
\item[Input] Definable set $X$, number $n$, definable set $\mathcal{S}\subseteq X^{(n)}$
\item[Question] Is there $\mathcal{S}' \subseteq \mathcal{S}$ where every $x \in X$ occurs in exactly one tuple?
\end{description}
\end{prob}

\begin{cor}\label{coro:exact-tuple-cover}
    \probref{ExactTupleCover} is undecidable.
\end{cor}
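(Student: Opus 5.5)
We reduce from \probref{ExactCover}, restricted to the instances built in the proof of Theorem~\ref{thm:undecidability-exact-cover}. By Remark~\ref{rem:exact-cover-with-finite}, that problem stays undecidable on those instances, so it suffices to turn each such $(X,\mathcal{S})$ effectively into an equivalent instance of \probref{ExactTupleCover}. Throughout, we only use that we are dealing with these specific instances.

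The key observation is that the family $\mathcal{S}$ built there is highly structured. Every edge set $e_{\{a,b\}}$ has exactly $|C|+6$ elements, and every tile set $t_{(a,b,c,d)}$ has exactly $2|C|+16$ elements. Moreover, each set carries a canonical ordering that can be defined from the atoms it was built from.

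\begin{itemize}
\item For a tile set, we list its elements in a fixed order determined by its tags and by the tuple $(a,b,c,d)$: first $\SE_a,\SW_b,\NW_c,\NE_d$, then the four arrows, then the numbered elements, and finally the colour elements in a fixed order of $C$.
\item For an edge set $e_{\{a,b\}}$, the set is symmetric in $a$ and $b$, so there is no equivariant choice between $(a,b)$ and $(b,a)$. This is harmless, because we may include both orderings as tuples. Choosing either tuple covers exactly the same elements, so exact covers are unaffected.
\end{itemize}

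The remaining mismatch is that the two kinds of sets have different sizes, while the problem requires a single length $n$. We resolve this with padding. Set $n=2|C|+16$, and for each $\{a,b\}$ add $|C|+10$ fresh padding elements $p^1_{\{a,b\}},\dots,p^{|C|+10}_{\{a,b\}}$ to $X$. Each edge tuple then has length $n$: it lists the $|C|+6$ elements of $e_{\{a,b\}}$, in either of the two orders, followed by all of $p^1_{\{a,b\}},\dots,p^{|C|+10}_{\{a,b\}}$.

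This padding can break the equivalence with the original instance: if $e_{\{a,b\}}$ is not chosen, nothing covers the padding elements of $\{a,b\}$. To handle this, add one further tuple for each $\{a,b\}$, consisting of the padding elements alone. This is where the real obstacle lies, since that tuple has the wrong length. There are two options.
\begin{itemize}
\item Split the padding into two blocks of equal size and cover them separately. This does not resolve the length mismatch.
\item Use a uniform padding trick: give each $\{a,b\}$ two disjoint padding blocks $P_{\{a,b\}}$ of size $|C|+10$ and $Q_{\{a,b\}}$ of size $n$. The edge tuple is $e_{\{a,b\}}$ followed by $P_{\{a,b\}}$. The padding-only tuples are $P_{\{a,b\}}$ concatenated with the first $|C|+6$ elements of $Q_{\{a,b\}}$, and the whole of $Q_{\{a,b\}}$.
\end{itemize}
The second option still does not work cleanly. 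The simplest fix is to pad every tuple with elements private to that tuple, and then argue that private padding elements must be covered by their own tuple.

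Private padding alone is not enough, though, since each private element forces its tuple to be chosen. So we use the following scheme instead. For each $\{a,b\}$, introduce $m=|C|+10$ padding elements $P_{\{a,b\}}$, and two types of length-$n$ tuples over $\{a,b\}$:
\begin{itemize}
\item the edge tuple, $e_{\{a,b\}}$ followed by $P_{\{a,b\}}$;
\item a dummy tuple, $P_{\{a,b\}}$ followed by $|C|+6$ further fresh elements $D_{\{a,b\}}$.
\end{itemize}
The dummy elements $D_{\{a,b\}}$ must also be covered whenever the dummy tuple is not chosen. So we also add, for each $\{a,b\}$, a tuple consisting of $D_{\{a,b\}}$ followed by $m$ elements $P'_{\{a,b\}}$, and continue in this pattern. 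Making this chain close up is the step I expect to need the most care.

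A cleaner alternative is to embed everything into tile-sized tuples in a different way. Replace each element $x$ of $X$ by $n$ copies $x^1,\dots,x^n$, and turn each set $S$ of size $k$ into tuples of length $nk'$, where $k'$ is the common multiple of the set sizes. Concretely, duplicate elements so that every set has the same size $L=\mathrm{lcm}(|C|+6,\,2|C|+16)$: each element of an edge set is replicated $L/(|C|+6)$ times, and each element of a tile set is replicated $L/(2|C|+16)$ times. This fails as it stands, because an element lies in both kinds of sets with different multiplicities.

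So the final plan is to choose replication factors per element rather than per set. Edge elements of type $\rightarrow$, $1$, $2$, $3$, $4$ and the colour elements can lie in both an edge set and a tile set. Their multiplicities must therefore agree, and the element counts per type have to be balanced by inserting, into the sets that fall short, fresh elements private to that set's defining atoms and forced to appear exactly once. Checking that this balancing is consistent, equivariant and preserves exact covers is the main obstacle.
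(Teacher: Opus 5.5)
There is a genuine gap. The one nontrivial step of the reduction is never carried out: making all tuples the same length without changing which instances admit exact covers. Each scheme you sketch is either abandoned by you or left unchecked, and the obstruction is structural rather than a matter of care.

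First, your final plan fails. An element that occurs only in sets built over one fixed tuple of atoms forces one such set into every exact cover. Since these tuples range over a whole orbit, your final plan would force either every edge set $e_{\{a,b\}}$ into the cover, or a tile set on every quadruple of distinct atoms. Either one makes exact covers impossible.

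Second, padding private to a pair $\{a,b\}$ cannot work. Suppose the padding is a finite block $Z_{\{a,b\}}$. The edge tuple uses $m=|C|+10$ of its elements, and it is otherwise touched only by length-$n$ tuples inside $Z_{\{a,b\}}$. Covers arising from tilings include some edge sets and omit others, so $Z_{\{a,b\}}$ must be exactly coverable both with and without the edge tuple. This forces $n \mid |Z_{\{a,b\}}|-m$ and $n\mid |Z_{\{a,b\}}|$, hence $n \mid m$. That is impossible, since $0<m<n$. So your chain $P, D, P', \ldots$ can never close up.

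The missing idea is to draw padding from a \emph{shared infinite reservoir} and to allow \emph{any} choice of padding elements. The paper does not need the tiling structure at all; it works for any \probref{ExactCover} instance $(X,\mathcal{S})$ with finite sets.
\begin{enumerate}
\item Let $n$ be the computable bound on set sizes, and replace $X$ by $X\uplus\atoms$.
\item Take as the new family all $n$-element subsets of $X\uplus\atoms$ whose intersection with $X$ is empty or lies in $\mathcal{S}$. This family is definable.
\item An exact cover of the new instance yields one of the old instance: intersect with $X$ and discard empty intersections.
\item Conversely, pad each set of an old exact cover with fresh atoms, arranging that infinitely many atoms stay unused. Then partition the unused atoms into $n$-element blocks, which are themselves allowed sets.
\item Finally, take all non-repeating $n$-tuples enumerating a set of the new family. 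This also makes your canonical orderings unnecessary.
\end{enumerate}
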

\begin{proof} By reduction from \probref{ExactCover} restricted to instances $(X,\mathcal{S})$ where all sets in $\mathcal{S}$ are finite.

First, note that in such an instance there is an upper bound on the size of a set in $\mathcal{S}$ and that upper bound (call it $n$) is computable from $\mathcal{S}$. We may further assume that all sets in $\mathcal{S}$ are of size exactly $n$: to achieve this, add infinitely many elements to $X$ (e.g.~replace it with a disjoint union of $X$ and $\atoms$), and redefine $\mathcal{S}$ to consist of every subset of size $n$ whose intersection with $X$ is either empty or in (the original) $\mathcal{S}$. This new instance of~\probref{ExactCover} admits an exact cover if and only if the original one does.

Now, let $\mathcal{S'}$ consist of all non-repeating $n$-tuples whose elements exactly list a set in $\mathcal{S}$. This may be produced effectively from $\mathcal{S}$. But then $(X,\mathcal{S'})$ admits an exact tuple cover precisely if $(X,\mathcal{S})$ admits an exact cover, and we are done.
\end{proof}

\subsection{Hitting set, Satisfiability, 0-1 Integer Linear Programming} {~}

\medskip

\noindent
A few problems allow straightforward reductions from \probref{ExactCover}.

\begin{prob}[\probname{HittingSet}]\label{prob:HittingSet}
\begin{description}
\item[Input] Definable set $P$, definable set $\mathcal{F}$ of subsets of $P$
\item[Question] Is there a subset $Q\subseteq P$ such that $|Q\cap C|=1$ for every $C\in \mathcal{F}$?
\end{description}
\end{prob}

In the finite setting~\cite{Karp72}, Karp demonstrated $\NP$-hardness of this problem, noticing that it is essentially \probref{ExactCover} in disguise. The same reduction works here.

\begin{thm}\label{thm:exactcover-hittingset}
\probref{HittingSet} is undecidable.
\end{thm}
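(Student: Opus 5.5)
We reduce from \probref{ExactCover}, which is undecidable by Theorem~\ref{thm:undecidability-exact-cover}, using Karp's dual construction: the hitting set selects sets of $\mathcal{S}$, and the constraints are indexed by the elements of $X$. Given an instance $(X,\mathcal{S})$, put $P=\mathcal{S}$ and
\[
	\mathcal{F} = \{\, \{Y\in\mathcal{S}\mid x\in Y\} \mid x\in X \,\}.
\]
This is exactly the family $\mathcal{F}$ that Section~\ref{sec:prelims} already shows to be definable. By the closure properties of definable sets listed there, $P$ and $\mathcal{F}$ are definable, and their defining expressions can be computed effectively from those of $X$ and $\mathcal{S}$. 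So the map $(X,\mathcal{S})\mapsto(P,\mathcal{F})$ is a computable reduction.

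For correctness, a subset $Q\subseteq P=\mathcal{S}$ satisfies $|Q\cap C_x|=1$ for $C_x=\{Y\in\mathcal{S}\mid x\in Y\}$ if and only if $x$ lies in exactly one member of $Q$. If this holds for every $x\in X$, then $Q$ is pairwise disjoint and its union is $X$, so $Q$ is an exact cover. Conversely, an exact cover $Q$ meets every $C_x$ in exactly one set.

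There is one subtlety. Different elements $x$ may give the same set $C_x$, but $\mathcal{F}$ is a set of sets, so duplicates merge harmlessly because each constraint is still imposed. Likewise, empty members of $\mathcal{S}$ (or an element in no set) are handled correctly, since $C_x=\emptyset$ makes both instances unsolvable. Neither issue breaks the equivalence, so no step is a real obstacle. The only point to check is definability of $\mathcal{F}$, and the preliminaries already establish it.
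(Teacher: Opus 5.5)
Your proposal is correct and is the same reduction the paper uses: $P=\mathcal{S}$ with the dual family $\mathcal{F}=\{\{Y\in\mathcal{S}\mid x\in Y\}\mid x\in X\}$, definability taken from Section~2, and exact covers corresponding exactly to hitting sets. Your extra degenerate-case checks are fine, with one correction. For an empty member of $\mathcal{S}$, the right reason is not that some $C_x$ is empty. It is that such a member lies in no $C_x$, so it is unconstrained in both problems.
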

\begin{proof}
Given an instance $(X,\mathcal{S})$ of \probref{ExactCover}, define $P=\mathcal{S}$ and let
\[
	\mathcal{F} = \{\{Y\in \mathcal{S}\mid x\in Y\} \mid x\in X\}.
\]
As remarked in Section~\ref{sec:prelims}, $\mathcal{F}$ is definable and its defining expression is computable from that of $X$ and $\mathcal{S}$. This follows from general principles explained e.g.~in~\cite[Chap.~10]{atom-book}, but in this case it is particularly easy to see: the membership relation defines a bipartite graph on points from $X$ and sets from $\mathcal{S}$, and the same bipartite graph, upon swapping the roles of points and sets, describes a family of subsets of $\mathcal{S} = P$. This family is exactly $\mathcal F$. Moreover, exact covers for $(X,\mathcal{S})$ exactly correspond to hitting sets for $(P,\mathcal{F})$.
\end{proof}

\begin{rem}\label{rem:hitting-set-with-finite}
In the above proof, if all sets in $\mathcal{S}$ are finite then we obtain an instance of \probref{HittingSet} where every element of $P$ belongs to finitely many sets in $\mathcal{F}$. Therefore, as a follow-up to Remark~\ref{rem:exact-cover-with-finite}, \probref{HittingSet} is undecidable when restricted to such instances.
\end{rem}

\medskip

A CNF propositional formula over a set $X$ of variables can be represented as a family of subsets of $X\times\{0,1\}$, each of the subsets representing a disjunctive clause. We say that the formula is definable if the family is definable. The notion of a satisfying assignment is as in the finite case. 

\begin{prob}[\probname{CNFSat}]\label{prob:CNFSat}
\begin{description}
\item[Input] Definable CNF formula $\varphi$
\item[Question] Is $\varphi$ satisfiable?
\end{description}
\end{prob}

\begin{thm}\label{thm:undecidability-sat}
\probref{CNFSat} is undecidable.
\end{thm}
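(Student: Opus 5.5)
We reduce from \probref{HittingSet} restricted to instances where every element of $P$ belongs to finitely many sets in $\mathcal{F}$; by Remark~\ref{rem:hitting-set-with-finite} this restricted problem is undecidable. We may further assume that every set in $\mathcal{F}$ is finite. Indeed, in the reduction of Theorem~\ref{thm:exactcover-hittingset} applied to the instances built in Theorem~\ref{thm:undecidability-exact-cover}, each $C\in\mathcal{F}$ is $\{Y\in\mathcal{S}\mid x\in Y\}$. Such a set is infinite, since every element of $X$ lies in infinitely many tile sets. So we cannot assume finiteness of the sets in $\mathcal{F}$, and the encoding must cope with infinite clauses. This is fine, because a CNF formula here is a family of subsets of $X\times\{0,1\}$, and clauses are allowed to be infinite.

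Given $(P,\mathcal{F})$, take $P$ as the set of propositional variables. The condition $|Q\cap C|=1$ splits into ``at least one'' and ``at most one''. For each $C\in\mathcal{F}$ we add the positive clause $C\times\{1\}$, which says that some variable in $C$ is true. For each $C\in\mathcal{F}$ and each pair of distinct $p,q\in C$ we add the binary clause $\{(p,0),(q,0)\}$, which says that $p$ and $q$ are not both true. The resulting family
\[
\varphi=\{C\times\{1\}\mid C\in\mathcal{F}\}\cup\bigl\{\{(p,0),(q,0)\}\mid \exists C\in\mathcal{F}.\ p,q\in C,\ p\neq q\bigr\}
\]
is definable. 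Its expression is computable from those of $P$ and $\mathcal{F}$ by the closure properties recalled in Section~\ref{sec:prelims}: images, products and quantification over definable sets. A valuation $v:P\to\{0,1\}$ satisfies $\varphi$ exactly when $Q=v^{-1}(1)$ meets every $C$ in exactly one element. Satisfying assignments therefore correspond bijectively to hitting sets, which gives the reduction.

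I expect no serious obstacle. The only point to check is the definability of the binary clause family, and this is immediate because it is the image of a definable subset of $\mathcal{F}\times P\times P$. Satisfiability here is meant over possibly infinite clauses and arbitrary, not necessarily definable, assignments, so no compactness argument is needed. In fact we could even reduce directly from \probref{HittingSet} without the finiteness restriction of Remark~\ref{rem:hitting-set-with-finite}. That restriction only matters if one wants each variable to occur in finitely many clauses, and we do not need this for the statement.
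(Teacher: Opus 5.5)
Your reduction is the paper's own. The variables are $P$, there is one positive clause $C\times\{1\}$ per $C\in\mathcal{F}$, and there are negative binary clauses $\{(p,0),(q,0)\}$ for distinct $p,q$ in a common $C$, so satisfying valuations are exactly hitting sets; as you note at the end, this already works from unrestricted \probref{HittingSet}.

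Drop the opening detour. Its sentence ``we may further assume that every set in $\mathcal{F}$ is finite'' is false, which you then observe yourself; indeed, by the paper's later remarks \probref{HittingSet} with finite sets in $\mathcal{F}$ is decidable. Neither finiteness restriction plays any role in the argument.
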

\begin{proof}
By reduction from \probref{HittingSet}. Given an instance $(P,\mathcal{F})$, define the following formula over variables from $P$:
\[\textstyle
	\left(\bigwedge_{C\in \mathcal{F}}\bigvee_{p\in C}p\right) \, \land \, \left(\bigwedge_{C\in \mathcal{F}}\bigwedge_{p\neq q\in C}(\neg p \lor \neg q)\right).
\]
Its satisfying assignments correspond to hitting sets for $\mathcal{F}$.
\end{proof}

\begin{rem}\label{rem:CNFfromFO}
There is an alternative proof of undecidability for \probref{CNFSat}, which avoids our reduction from Theorem~\ref{thm:undecidability-exact-cover}, and proceeds directly by a straightforward reduction from the satisfiability problem for the $\forall\exists$-fragment of first order logic~\cite{BGG97}.%
\footnote{\ We are grateful to M.~Boja\'nczyk for pointing this out.} The idea is as follows.

Consider a first-order sentence $\varphi = \forall \overline{x} \exists \overline{y} \; \psi(\overline{x},\overline{y})$, where $\psi$ is quantifier-free and $\overline{x}, \overline{y}$ are disjoint tuples of variables of length $n$ and $k$ respectively. We may assume that $\varphi$ is over a finite relational signature $\sigma$, and that it does not mention equality (so that if it has a model then it has one with a countably infinite domain, hence one with domain $\atoms$).

Let $V$ be the set of basic\footnote{\ In first-order logic these are usually called {\em atomic formulas} or even {\em atoms}; we avoid that terminology for obvious reasons.} formulas $R(a_1, \dots, a_r)$, where $R$ is a relation symbol of arity $r$ in $\sigma$ and the $a_i$ are atoms. These will be variables in our \probref{CNFSat} instance. Note that for every $\overline{a}, \overline{b}$ of appropriate lengths, the formula $\psi(\overline{a},\overline{b})$ can be seen as a propositional formula over $V$. Note also that Boolean valuations $V \rightarrow \{true,false\}$ are in a bijective correspondence with $\sigma$-structures on $\atoms$. 

We intend to write an equivariant orbit-finite CNF formula expressing that the $\sigma$-structure encoded by a valuation is a model of $\varphi$. To do this, it is helpful to add some intermediate variables: let $V'$ consist of one variable $v_{\overline{a}, \overline{b}}$ for each $\overline{a}, \overline{b}$ in (respectively) $\atoms^n, \atoms^k$. Using standard propositional manipulations, transform the following propositional formula over $V\cup \{v_{\overline{a}, \overline{b}}\}$:
\[
	v_{\overline{a}, \overline{b}} \leftrightarrow \psi(\overline{a},\overline{b})
\]
into CNF form. Call the result $\chi_{\overline{a},\overline{b}}$.
Then a $\sigma$-structure on $\atoms$ satisfies $\varphi$ precisely if its corresponding valuation satisfies:
\[
\left(\bigwedge_{\overline{a} \in \atoms^{n}}\bigwedge_{\overline{b} \in \atoms^{k}}\chi_{\overline{a},\overline{b}}\right) \land \left(\bigwedge_{\overline{a} \in \atoms^{n}}{\bigvee_{\overline{b} \in \atoms^{k}}{v_{\overline{a}, \overline{b}}}}\right).
\]
This is a definable CNF formula over $V\cup V'$, effectively derived from $\varphi$.
\end{rem}

Notwithstanding the above construction, our reduction in Theorem~\ref{thm:undecidability-sat} is still worthwhile. In Theorem~\ref{thm:decidability-3-sat} we shall see that \probref{3-Sat}, and more generally \probref{CNFSat} where all clauses are finite, is decidable. The reductions from Theorems~\ref{thm:undecidability-sat} and~\ref{thm:exactcover-hittingset}  therefore imply that, as a counterpoint to Remarks~\ref{rem:exact-cover-with-finite} and~\ref{rem:hitting-set-with-finite}, the following problems are decidable:
\begin{itemize}
\item \probref{HittingSet} for instances $(P,\mathcal{F})$ where all sets in $\mathcal{F}$ are finite,
\item \probref{ExactCover} for instances $(X,\mathcal{S})$ where every $x\in X$ belongs to finitely many sets in $S$.
\end{itemize}

\medskip

One could also try to use the reduction in Remark~\ref{rem:CNFfromFO} to dispense with our reduction from Theorem~\ref{thm:undecidability-exact-cover} altogether. Indeed, there is an easy reduction from \probref{CNFSat} to \probref{HittingSet} (and thus to \probref{ExactCover}, by reversing the construction from Theorem~\ref{thm:exactcover-hittingset}): given a formula $\varphi$, begin by creating a \probref{HittingSet} instance over the variables of $\varphi$ and their formal negations, and add a set $\{v, \neg v\}$ for each $v$. It then suffices to add, for each clause $C$ of $\varphi$, extra elements and sets which force at least one literal in $C$ to be included in any hitting set. This is easy to do: the gadget shown in Figure \ref{fig:cnfsat-hittingset-reduction} (where elements are shown as dots and sets are circled) ensures that at least one of the blue elements is included in every hitting set, and does not introduce any other restrictions on said blue elements.

\begin{figure}
\centering
\includegraphics[scale=0.7]{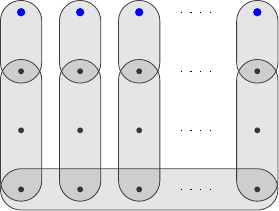}
\caption{A \probref{HittingSet} instance forcing the inclusion of a blue element}
\label{fig:cnfsat-hittingset-reduction}
\end{figure}

This proves that \probref{HittingSet}, and by extension \probref{ExactCover}, are undecidable. However, this construction would not let us show undecidability of \probref{ExactTupleCover} (or \probref{ExactCover} on instances $(X,\mathcal{S})$ where all sets in $\mathcal{S}$ are finite). Indeed, in the CNF formula produced in Remark~\ref{rem:CNFfromFO} some variables are present in infinitely many clauses, hence in the resulting instance $(P,\mathcal{F})$ of~\probref{HittingSet} some elements of $P$ belong to infinitely many sets in $\mathcal{F}$, and when (the reverse of) the construction from Theorem~\ref{thm:exactcover-hittingset} is applied to that, we obtain an instance $(X,\mathcal{S})$ of~\probref{ExactCover} where some sets in $\mathcal{S}$ are infinite. In contrast, our construction from Theorem~\ref{thm:undecidability-exact-cover} does prove Corollary~\ref{coro:exact-tuple-cover}, and it will be an essential starting point of further reductions in Theorems~\ref{thm:undecidability-directed-hamiltonicity} and~\ref{thm:3dmatching} below.

\medskip

We now turn attention to 0-1 Integer Linear Programming. 
For a definable set $X$ of variables, an equation is a pair $(c,n)$, where $c:X\to\Z$ is a definable function and $n\in\Z$. This is intended to represent the equation $\sum_{x\in X}c_xx = n$. A valuation $v:X\to\{0,1\}$ solves the equation if (i) $v(x)$ and $c_x$ are simultaneously non-zero for only finitely many $x$'s, and (ii) $\sum_{x\in X}c_xv(x)$ equals $n$. (The first condition makes the sum well defined.) The problem is then posed as:

\begin{prob}[\probname{0-1-ILP}]\label{prob:0-1-ILP}
\begin{description}
\item[Input] Definable set $X$, definable set $E$ of equations over $X$
\item[Question] Does $E$ have a solution?
\end{description}
\end{prob}

\begin{thm}\label{thm:undecidability-01ILP}
\probref{0-1-ILP} is undecidable.
\end{thm}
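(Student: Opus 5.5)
We reduce from \probref{ExactCover} restricted to instances $(X,\mathcal{S})$ in which every set in $\mathcal{S}$ is finite. By Remark~\ref{rem:exact-cover-with-finite} this restriction is still undecidable. The restriction matters because the equations we build must have only finitely many nonzero coefficients that can meet the support of a solution. Starting from a finite-set instance is what lets every equation be evaluated on a finite set of nonzero terms.

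Given $(X,\mathcal{S})$, take the set of variables to be $\mathcal{S}$ itself, with the intended meaning that a set $Y\in\mathcal{S}$ is chosen exactly when its variable is $1$. For each $x\in X$ introduce the equation $(c^x,1)$, where $c^x:\mathcal{S}\to\Z$ is given by $c^x_Y=1$ if $x\in Y$ and $c^x_Y=0$ otherwise. This equation says $\sum_{Y\ni x}Y=1$. Let $E=\{(c^x,1)\mid x\in X\}$. The map $x\mapsto c^x$ is defined from the membership relation by a set-builder expression, so $E$ is definable and its expression is computable from that of $(X,\mathcal{S})$. This uses the closure properties recalled in Section~\ref{sec:prelims} and is analogous to the construction of $\mathcal{F}$ in Theorem~\ref{thm:exactcover-hittingset}. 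Indeed, each $c^x$ is just the characteristic function of the set $\{Y\in\mathcal{S}\mid x\in Y\}$.

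For correctness, first suppose $\mathcal{S}'\subseteq\mathcal{S}$ is an exact cover, and let $v$ be its characteristic function. For each $x$, exactly one $Y\in\mathcal{S}'$ contains $x$. Hence $v(Y)$ and $c^x_Y$ are simultaneously nonzero for exactly one $Y$, so condition (i) holds and the sum equals $1$.

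Conversely, suppose $v$ is a solution, and let $\mathcal{S}'=v^{-1}(1)$. Condition (i) makes each sum a finite sum of $0$s and $1$s, and this sum counts the sets in $\mathcal{S}'$ that contain $x$. Since the sum equals $1$, each $x$ lies in exactly one chosen set, so $\mathcal{S}'$ is an exact cover. Under this reading, the finiteness restriction on $\mathcal{S}$ is not even strictly needed: condition (i) already forces finiteness of the relevant sums.

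The only point requiring care is the formal definability of $E$, since equations are pairs consisting of a definable function and an integer. The functions $c^x$ are $\supp(x)$-supported subsets of $\mathcal{S}\times\{0,1\}$, obtained uniformly in $x$ by one set-builder expression. Beyond checking this, the argument is routine. Together with undecidability of \probref{ExactCover} (Theorem~\ref{thm:undecidability-exact-cover}), this completes the proof.
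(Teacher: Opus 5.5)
Your proof is correct and is essentially the paper's argument. The paper reduces from \probref{HittingSet}, turning each $C\in\mathcal{F}$ into the equation $\sum_{p\in C}p=1$, and your construction is exactly that step composed with the dual-family reduction of Theorem~\ref{thm:exactcover-hittingset}.

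As you note yourself, the restriction to finite sets in $\mathcal{S}$ is unnecessary, because condition (i) alone makes every sum finite. Your initial justification for the restriction is also mistaken: finiteness of the sets bounds how many equations each variable occurs in, not how many nonzero coefficients an equation has. Indeed, in the paper's instance each $x$ lies in infinitely many sets.
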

\begin{proof}
Given an instance $(P,\mathcal{F})$ of \probref{HittingSet}, put $X=P$ and for each $C\in \mathcal{F}$ add the equation $\sum_{p\in C}p=1$ to $E$. Solutions to $E$ are hitting sets for $\mathcal{F}$.
\end{proof}

This result is not entirely new. In~\cite[Sec.~9]{GHL25}, undecidability is proved for Integer Linear Programming but without the 0-1 restriction, with inequalities allowed in addition to equations, and with the additional condition that only finite solutions are sought. However, an inspection of that proof shows that only 0-1 variables are actually used and inequalities can be encoded as equations. Moreover, the proof can be adapted to deal with arbitrary solutions.\footnote{\ We are grateful to A.~Ghosh, P.~Hofman and S.~Lasota for a helpful discussion about this issue.}

\subsection{Hamiltonicity}

In the finite setting~\cite{Karp72}, Karp considered the existence of a Hamiltonian cycle, both in directed and undirected graphs. It may not be clear what an infinite cycle should mean, but the definition of ``a Hamiltonian cycle'' as ``a connected $2$-regular\footnote{\ In directed graphs we count the total degree, and require in-degree and out-degree to both be $1$.} subgraph that meets every vertex'' generalises well: on an infinite graph, it means a path which is doubly-infinite, i.e.~extends infinitely in both directions. 

\begin{prob}[\probname{(Un)DirectedHamiltonicity}]\label{prob:DirectedHamiltonicity}\label{prob:UndirectedHamiltonicity}\label{prob:Hamiltonicity}
\begin{description}
\item[Input] Definable (un)directed graph $G$
\item[Question] Does $G$ have a doubly-infinite Hamiltonian path?
\end{description}
\end{prob}

\begin{thm}\label{thm:undecidability-directed-hamiltonicity}
\probref{DirectedHamiltonicity} is undecidable.
\end{thm}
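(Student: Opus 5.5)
We reduce from \probref{ExactTupleCover}, which is undecidable by Corollary~\ref{coro:exact-tuple-cover}. Given $(X,n,\mathcal{S})$ with $\mathcal{S}\subseteq X^{(n)}$, we build a definable directed graph $G$. It contains one vertex $v_x$ for each $x\in X$ and one gadget $\Gamma_S$ for each tuple $S=(x_1,\ldots,x_n)\in\mathcal{S}$. Each gadget has a designated entry vertex $\mathrm{in}_S$ and exit vertex $\mathrm{out}_S$. We add an edge $\mathrm{out}_S\to\mathrm{in}_{S'}$ for all $S\neq S'$. Since $\mathcal{S}$ is countable, this ``hub'' lets a doubly-infinite path visit the gadgets in any $\Z$-indexed order. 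The gadget vertices are tagged copies of tuples from $\mathcal{S}$ together with a position index $i\le n$, so $G$ is definable and effectively computable from the input, by the closure properties recalled in Section~\ref{sec:prelims}. If $\mathcal{S}$ is finite the instance is trivial, and this case can be decided separately.

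Each gadget should admit exactly two traversals from $\mathrm{in}_S$ to $\mathrm{out}_S$ that cover all of its private vertices. In the \emph{selected} traversal, the path detours through $v_{x_1},\ldots,v_{x_n}$ in turn. In the \emph{unselected} traversal, it visits none of them. Element vertices get edges only to and from the ``slot'' vertices of gadgets whose tuple contains that element. So every $v_x$ must be visited from exactly one gadget, and exactly once.

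For the direction from cover to path, take an exact tuple cover $\mathcal{S}'$. Fix a bijection $\beta:\Z\to\mathcal{S}$ and concatenate the gadget traversals in the order given by $\beta$, choosing the selected mode exactly for $S\in\mathcal{S}'$. This yields a doubly-infinite Hamiltonian path. For the converse, take a Hamiltonian path and prove three things. First, every gadget is entered only through $\mathrm{in}_S$ and left only through $\mathrm{out}_S$, apart from the detours to element vertices. Second, within a gadget the path is one of the two intended traversals. Third, each $v_x$ is therefore visited by exactly one selected gadget. The set of selected tuples is then an exact tuple cover.

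The main obstacle is designing $\Gamma_S$ so that its mode is \emph{consistent}. A naive chain $p_i\to v_{x_i}\to q_i$ with a bypass $p_i\to q_i$ lets the path take the detour at some positions and not others, covering elements with ``partial'' tuples. I would first try to adapt Karp's vertex-cover gadget: use two parallel chains $A_1\ldots A_n$ and $B_1\ldots B_n$ of private vertices, joined by ladder edges $A_i\to B_i$, $B_i\to A_{i+1}$ and so on, with the element detours attached only to one side. The aim is a case analysis showing that any traversal covering all private vertices with a single entry and exit must commit at $\mathrm{in}_S$ to one side and zigzag uniformly. As a fallback, each element detour could be made to go through two ports $v_x^{\mathrm{in}},v_x^{\mathrm{out}}$ that are reachable only from the preceding element port of the same gadget. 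A partial detour would then strand a private vertex. A further point to verify in this case analysis is that the path cannot ``escape'' a gadget through an element vertex into another gadget's slot. I would handle this by giving $v_x$ out-edges only to the slot of the same gadget and position, i.e., by making the out-edge from $v_x$ depend on the incoming gadget through duplicated tagged copies.
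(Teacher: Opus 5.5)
\paragraph{Verdict.} The proposal has a genuine gap: the gadget, on which the whole converse direction rests, is never specified, and the suggested fix for escapes does not work.

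\paragraph{What you have and what is missing.} Your architecture matches the paper's. You reduce from \probref{ExactTupleCover}, keep one shared vertex per element of $X$, attach a gadget to each tuple, link gadget ends through a hub, assume $\mathcal{S}$ infinite, and lay the gadgets along $\Z$ for the easy direction. You correctly diagnose that optional detours allow partial selections, and that the path might leave through $v_x$ into a different gadget. You then only list candidate gadgets whose case analysis is still to be done, so the key step is missing.

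\paragraph{Why your escape remedy fails.} A vertex's out-neighbourhood cannot depend on how the path entered it. A shared $v_x$ must have out-edges into every gadget whose tuple contains $x$, since any of them may be the selected one. Duplicating $v_x$ into tagged copies does not help. A copy private to a gadget must be visited regardless, so it no longer records which tuple covers $x$. A vertex shared between gadgets again has out-edges into all of them. Escapes must instead be excluded by rigidity of the gadget itself: leaving through $x_j$ into a foreign gadget has to strand a private vertex.

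\paragraph{The paper's gadget.} For $\overline{x}=(x_1,\ldots,x_n)$ it uses vertices $(\overline{x},1),\ldots,(\overline{x},3n+3)$ with the following edges:
\begin{itemize}
\item backward edges $(\overline{x},i+1)\to(\overline{x},i)$ for all $i$;
\item forward edges $(\overline{x},i)\to(\overline{x},i+1)$ only when $3\nmid i$;
\item element edges $(\overline{x},3j)\to x_j\to(\overline{x},3j+1)$.
\end{itemize}
The two ends of all gadgets are joined into one clique.

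Each middle vertex $(\overline{x},3j+2)$ has in- and out-neighbourhood both equal to $\{(\overline{x},3j+1),(\overline{x},3j+3)\}$, so every block of three is traversed monotonically. Going forward, the only way from block $j-1$ to block $j$ is through $x_j$, so detours are compulsory. Going backward, the element edges point the wrong way, so no element is visited.

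Every vertex of a doubly-infinite path has exactly one predecessor and one successor. Using this, a short check at $(\overline{x},3j)$ and $(\overline{x},3j+1)$ shows that adjacent blocks have the same direction, and that a detour via $x_j$ returns to the same gadget. Otherwise $(\overline{x},3j)$ is left with no available predecessor, or $(\overline{x},3j+1)$ with no available successor.

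The two modes run in opposite directions, which is why the paper joins all ends in a clique. With your $\mathrm{out}_S\to\mathrm{in}_{S'}$ hub, you would add edges from $\mathrm{in}_S$ to both ends and from both ends to $\mathrm{out}_S$.
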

\begin{proof}
By a reduction from~\probref{ExactTupleCover}.
Take as input an instance $(X,\mathcal{S})$, with $\mathcal{S}\subseteq X^{(n)}$ a set of non-repeating tuples. We may assume that $\mathcal{S}$ is infinite. Let us construct $G = (V,E)$ which has a doubly-infinite Hamiltonian path if and only if $(X,\mathcal{S})$ admits an exact cover, and may moreover be produced effectively from $(X,\mathcal{S})$.

We intend $G$ to consist of an infinite independent set of vertices corresponding to the elements of $X$, together with a gadget for each tuple $\overline{x} \in \mathcal{S}$. The gadgets will mediate how a doubly-infinite Hamiltonian path can visit vertices corresponding to elements of $X$.

Formally, we define $V$ as the disjoint union:
\begin{align*}
V = X \uplus \left\{(\overline{x},i) \mid \overline{x}\in \mathcal{S},\ 1\leq i\leq 3n+3\right\}.
\end{align*}
The vertices $(\overline{x},i)$ will form the gadget corresponding to $\overline{x}$; the first and the last of them will be called the {\em ends} of the gadget.

For edges, first connect vertices in each gadget as illustrated in Figure~\ref{fig:gadget}; then connect all ends of all gadgets, joining them into a single infinite clique (see Figure \ref{fig:gadget-graph} for an illustration). Formally, we include an edge:
\begin{itemize}
    \item from $(\overline{x},i)$ to $(\overline{x},i+1)$ whenever $i$ is not a multiple of $3$;
    \item from $(\overline{x},i+1)$ to $(\overline{x},i)$ for all $i$;
    \item from $(\overline{x},3i)$ to $x \in X$ whenever $x$ is the $i$th component of $\overline{x}$;
    \item from $x \in X$ to $(\overline{x},3i+1)$ whenever $x$ is the $i$th component of $\overline{x}$;
    \item from $(\overline{x},i)$ to $(\overline{y},j)$ where $i$ is either $1$ or $3n+3$ ($n$ is the arity of $\overline{x}$) and $j$ is either $1$ or $3m+3$ ($m$ the arity of $\overline{y}$).
\end{itemize}

\begin{figure}
\makebox[\textwidth][c]{\includegraphics{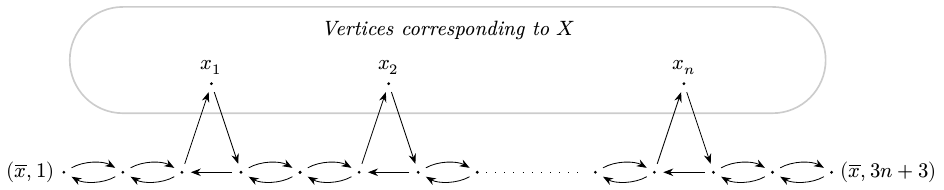}}
\caption{The gadget corresponding to $\overline{x} = (x_1, \dots, x_n)$}
\label{fig:gadget}
\end{figure}

\medskip

\begin{figure}
\centering
\includegraphics{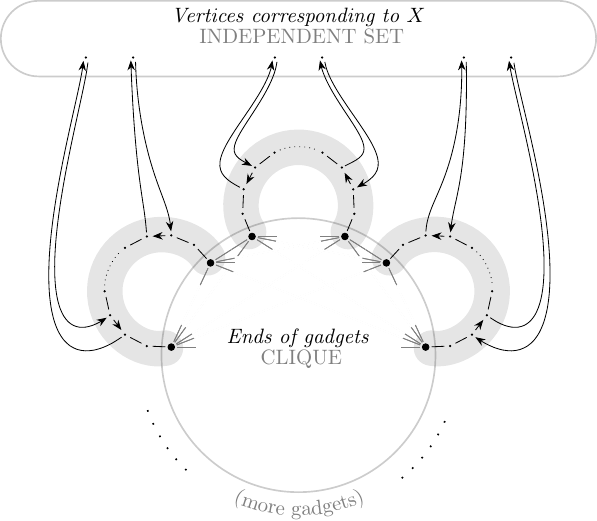}
\caption{The graph constructed in the proof of Theorem \ref{thm:undecidability-directed-hamiltonicity}}
\label{fig:gadget-graph}
\end{figure}

\medskip

\begin{figure}
\begin{subfigure}[h]{0.9\linewidth}
\centering
\includegraphics[scale=1.2]{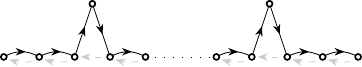}
\caption{From first vertex to last}
\end{subfigure}
\\[1em]
\begin{subfigure}[h]{0.9\linewidth}
\centering
\includegraphics[scale=1.2]{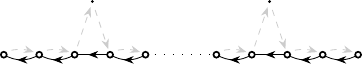}
\caption{From last vertex to first}
\end{subfigure}
\caption{The two ways to visit a gadget}
\label{fig:gadget-visitation}
\end{figure}

Consider a doubly-infinite Hamiltonian path in this graph. Note it can visit the gadget corresponding to $(x_1, \dots, x_n)$ in one of two ways: from first vertex to last, passing through all of $x_1, \dots, x_n$, or from last to first, visiting no $x_i$ along the way (see Figure \ref{fig:gadget-visitation}). In particular, it is not possible to enter or exit the gadget via any of the vertices $x_i$, as that would leave some vertices in the gadget unvisited and inaccessible. As a result, every $x \in X$ has a unique gadget which was being traversed (necessarily, first-to-last) when $x$ was visited. The tuples corresponding to those chosen gadgets form an exact cover.

Conversely, fix an exact cover $E$. The set $\mathcal{S}$ is infinite, so there are infinitely many gadgets in the graph. Consider a doubly-infinite path which visits gadgets corresponding to $E$ from first to last, and others from last to first. Correctness is clear.
\end{proof}

There is an easy reduction from directed to undirected Hamiltonicity in the classical case~\cite{Karp72}, and it goes through in the definable setting with no change. 

\begin{thm}\label{thm:undecidability-undirected-hamiltonicity}
\probref{UndirectedHamiltonicity} is undecidable.
\end{thm}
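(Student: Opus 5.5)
We reduce from \probref{DirectedHamiltonicity}, which is undecidable by Theorem~\ref{thm:undecidability-directed-hamiltonicity}, using Karp's classical vertex-splitting construction. Given a definable directed graph $G=(V,E)$, let
\[
	V' = V\times\{\mathrm{in},\mathrm{mid},\mathrm{out}\}.
\]
The edge set $E'$ consists of the undirected edges $\{(v,\mathrm{in}),(v,\mathrm{mid})\}$ and $\{(v,\mathrm{mid}),(v,\mathrm{out})\}$ for every $v\in V$, together with $\{(u,\mathrm{out}),(v,\mathrm{in})\}$ for every directed edge $(u,v)\in E$. Both $V'$ and $E'$ are given by set-builder expressions over those for $V$ and $E$, so the graph $G'=(V',E')$ is definable and effectively computable from $G$.

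If $G$ has a doubly-infinite directed Hamiltonian path $\dots v_{-1}v_0v_1\dots$, then replacing each $v_i$ by $(v_i,\mathrm{in}),(v_i,\mathrm{mid}),(v_i,\mathrm{out})$ gives a doubly-infinite Hamiltonian path in $G'$. Consecutive blocks are joined by the edges $\{(v_i,\mathrm{out}),(v_{i+1},\mathrm{in})\}$.

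For the converse, take a Hamiltonian path $P$ in $G'$, viewed as a connected $2$-regular spanning subgraph that is a doubly-infinite path. Each $(v,\mathrm{mid})$ has degree exactly $2$ in $G'$, so both of its edges lie in $P$. Hence every $(v,\mathrm{in})$ uses exactly one further edge of $P$, and that edge goes to some $(u,\mathrm{out})$. Likewise every $(v,\mathrm{out})$ uses exactly one further edge, going to some $(w,\mathrm{in})$.

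The edges of $P$ between blocks therefore define, on $V$, a relation in which every vertex has exactly one successor and exactly one predecessor via edges of $E$. The blocks $\mathrm{in}$--$\mathrm{mid}$--$\mathrm{out}$ are traversed contiguously along $P$. We orient $P$ so that some block is read $\mathrm{in}\to\mathrm{out}$; then each following block must also be read $\mathrm{in}\to\mathrm{out}$, because one enters a block at $\mathrm{in}$ exactly when leaving the previous block at $\mathrm{out}$. Contracting the blocks thus yields a doubly-infinite directed path through all of $V$ using edges of $E$.

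The main point needing care is that connectedness and double-infiniteness survive this contraction. This holds because contraction maps $P$ onto a sequence indexed by $\mathbb{Z}$, and the blocks partition it into consecutive triples. The same argument would also work for a Hamiltonian ``cycle'' in the generalised sense, since we only use the local degree argument plus the ordering of $P$.
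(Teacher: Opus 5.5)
Your proposal is correct and is the same proof as the paper's: the in/mid/out vertex-splitting reduction from \probref{DirectedHamiltonicity}, where the middle vertices of degree $2$ force every block to be traversed contiguously, and, up to reversal, as $\mathrm{in}\to\mathrm{mid}\to\mathrm{out}$. You spell out the degree-counting and contraction steps that the paper compresses into a single sentence.
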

\begin{proof} 
By reduction from \probref{DirectedHamiltonicity}. Given a definable directed graph $G = (V,E)$, form an undirected graph $G'$ with three vertices $v_{in}, v_{mid}, v_{out}$ for each $v \in V$. For each $v \in V$ add edges $\{v_{in}, v_{mid}\}, \{v_{mid}, v_{out}\}$, and for each $(u,v) \in E$ add an edge $\{u_{out}, v_{in}\}$.
Then a Hamiltonian path in $G'$ must - up to reversal - always visit $v_{out}$ after $v_{mid}$ after $v_{in}$. Hence these correspond to directed Hamiltonian paths in $G$. The graph $G'$ is clearly definable and the construction is effective.
\end{proof}

We chose to consider doubly-infinite Hamiltonian paths, but essentially the same arguments apply to {\em singly}-infinite paths, which have a starting point and extend to infinity in one direction.
More precisely, the directed construction goes through exactly as is (although to make the argument easier we might like to add a new `initial' vertex with an edge to each end-vertex of each gadget). The reduction from the directed to the undirected does not go through as stated, but it can be fixed easily, by adding a pair of `initial vertices' $u,v$ with an edge between them and an edge from $v$ to $w_{in}$ for each $w$.

\subsection{3D matching}

3D matching is a generalisation of bipartite matching to 3-hypergraphs. Some classical formulations of this problem do not adapt to an infinite setting well, but the following one does.

\begin{prob}[\probname{3DMatching}]\label{prob:3DMatching}
\begin{description}
\item[Input] Definable sets $A, B, C$ and $R\subseteq A\times B\times C$
\item[Question] Is there a subset of $R$ whose projections onto $A,B,C$ are bijections?
\end{description}
\end{prob}

Karp in~\cite{Karp72} considered a special case where $A=B=C$, but our formulation easily reduces to that (hint: consider the disjoint union $A\uplus B\uplus C$), both in the finite and in the definable case. In~\cite{Karp72} hardness is proved by a reduction from \probref{ExactCover}. That reduction does not quite work in the definable setting, but we give one that works, based on the same general idea. 

\begin{thm}\label{thm:3dmatching}
\probref{3DMatching} is undecidable.
\end{thm}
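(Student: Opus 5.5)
The plan is to reduce from \probref{ExactTupleCover}, which is undecidable by Corollary~\ref{coro:exact-tuple-cover}. I will adapt the classical cyclic ``truth-setting'' gadget of Garey and Johnson and add an infinite garbage collector. The garbage collector replaces the counting argument of the finite case.

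Let $(X,\mathcal{S})$ with $\mathcal{S}\subseteq X^{(n)}$ be given. As in the proof of Corollary~\ref{coro:exact-tuple-cover}, we may replace $X$ by $X\uplus\atoms$ and add suitable tuples. This lets us assume that $X$ is infinite, and then $\mathcal{S}$ and any exact cover are infinite as well. Define
\begin{align*}
A &= \{u_{\bar x,i}\mid \bar x\in\mathcal{S}, 1\le i\le n\}\uplus\{g_a\mid a\in\atoms\},\\
B &= \{v_{\bar x,i}\mid \bar x\in\mathcal{S}, 1\le i\le n\}\uplus\{h_a\mid a\in\atoms\},\\
C &= X\uplus\{q_{\bar x,i}\mid \bar x\in\mathcal{S}, 1\le i\le n\}.
\end{align*}
For each $\bar x=(x_1,\ldots,x_n)\in\mathcal{S}$ and each $i$, put into $R$ the ``true'' triple $(u_{\bar x,i},v_{\bar x,i},x_i)$ and the ``false'' triple $(u_{\bar x,i+1},v_{\bar x,i},q_{\bar x,i})$, with indices taken modulo $n$. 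In addition, put into $R$ the garbage triples $(g_a,h_a,q_{\bar x,i})$ for all $a$, $\bar x$ and $i$. All these sets are clearly definable and computable from $(X,\mathcal{S})$.

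For correctness, first take a matching $M\subseteq R$. Each $u_{\bar x,i}$ lies only in the true triple for $i$ and the false triple for $i-1$. Each $v_{\bar x,i}$ lies only in the true and false triples for $i$. Going around the cycle, this forces every gadget to use either all its true triples or all its false triples. Elements of $X$ occur only in true triples. Hence the tuples whose gadgets are set to ``true'' cover each $x\in X$ exactly once, so they form an exact tuple cover.

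Conversely, let $E$ be an exact tuple cover. For $\bar x\in E$ take all true triples of its gadget, and for $\bar x\notin E$ take all false triples. The tips $q_{\bar x,i}$ with $\bar x\in E$ are still uncovered. There are countably infinitely many of them, because $E$ is infinite and definable sets are countable. Fix a bijection $\beta$ between $\atoms$ and these tips, and add the triples $(g_a,h_a,\beta(a))$ for all $a\in\atoms$. This covers every garbage pair and every leftover tip exactly once, so the projections onto $A$, $B$ and $C$ are bijections.

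The main obstacle I expect is the garbage collector. In the finite case the number of garbage pairs is tuned exactly. Here the argument must instead ensure that infinitely many tips remain uncovered, which is why $X$ has to be made infinite first. Beyond that, only the cyclic forcing argument needs care, including the degenerate case $n=1$, where the ``cycle'' is a single pair of triples. The rest is routine definability bookkeeping.
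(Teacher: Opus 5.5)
Your reduction is correct. Write $T_i$ and $F_i$ for the true and false triples of a gadget. Since $v_{\bar x,i}$ lies only in $T_i,F_i$ and $u_{\bar x,i}$ only in $T_i,F_{i-1}$, you get $F_i\in M\iff F_{i-1}\in M$, so the cyclic forcing works, including for $n=1$. In the converse direction, the collector triples $(g_a,h_a,\beta(a))$ cover exactly the tips of the tuples in $E$.

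Your overall plan matches the paper's: a reduction from \probref{ExactTupleCover}, an all-or-nothing cyclic gadget per tuple, and an infinite atom-indexed collector. The difference is where the garbage sits. The paper takes $R\subseteq A\times A\times B$ with $A=Y\uplus\atoms$ and $B=X\uplus\atoms$. Its ``false'' triples $((\bar x,i),(\bar x,i+1),a)$ carry an \emph{arbitrary} atom $a$ as third component, and all of $\atoms^3$ serves as the collector. Atoms are thus consumed by the tuples \emph{outside} the cover, chosen so that infinitely many remain, and these leftovers are paired off by $(\sigma a,\sigma a,a)$. This works for any size of $E$, even finite or empty, so no preprocessing is needed.

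Your version is the textbook Garey--Johnson gadget, in which the collector absorbs the tips of the tuples \emph{inside} the cover. That is why you need $E$ infinite, and hence the preliminary step making $X$ infinite. The step is fine but should be made explicit. For $n\geq 1$, add all of $\atoms^{(n)}$ to $\mathcal{S}$; these tuples never touch $X$, and the new atoms always admit an exact cover by them. Alternatively, note that the instances produced in the proof of Corollary~\ref{coro:exact-tuple-cover} already contain a copy of $\atoms$ in $X$.

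In return, your gadget elements $u_{\bar x,i},v_{\bar x,i}$ each lie in exactly two triples, and all infinite branching is confined to the collector. In the paper, by contrast, every $(\bar x,i)$ lies in infinitely many type~(ii) triples.
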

\begin{proof} 
By reduction from~\probref{ExactTupleCover}. Given an instance $(X,\mathcal{S})$ (with $\mathcal{S}\subseteq X^{(n)}$ a set of non-repeating tuples), define:
\[
	Y = \{(\overline{x},i) \mid \overline{x}\in \mathcal{S}, 0\leq i<n\},
	\qquad A = Y\uplus \atoms, \qquad B = X\uplus\atoms.
\]
Then let $R \subseteq A \times A \times B$ contain three types of triples:
\begin{itemize}
    \item[(i)] $((\overline{x},i),(\overline{x},i),x_i)$, where $(\overline{x}, i) \in Y$ for $\overline{x} = (x_0, \dots, x_{n-1})$;
    \item[(ii)] $((\overline{x},i),(\overline{x},i+1),a)$, where $(\overline{x}, i) \in Y$, $a \in \atoms$, and the +1 is modulo $n$,
    \item[(iii)] all triples from $\atoms^3$.
\end{itemize}

Any $M\subseteq R$ is associated in an obvious way to a subset of the disjoint union $A\uplus A\uplus B$: an element of $A$ in the first component is included if it occurs on the first position in some triple in $M$, and so on for the second and third components. We say these are the elements $M$ \emph{covers}. Then $M$ is a 3D-matching if and only if every element of that disjoint union is covered exactly once (meaning that $M$ contains a unique triple covering each such element).

\begin{figure}[!htb]
\centering
\includegraphics[scale=1.0]{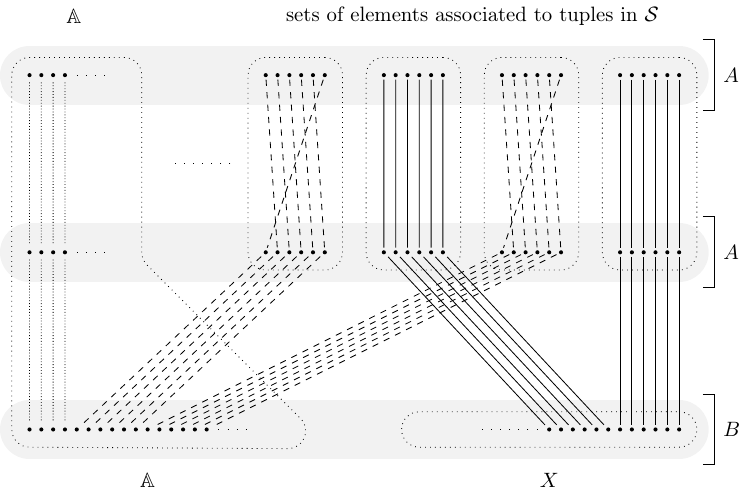}
\caption{Schematics of a matching from the proof of Theorem~\ref{thm:3dmatching}. Solid lines mark triples of type~(i), dashed lines -- of type~(ii), dotted lines -- of type~(iii).}
\label{fig:3d-matching-construction}
\end{figure}

Suppose $M \subseteq R$ is a 3D-matching. Then each $(\overline{x},i)\in A$ (from either of the first two components) can be covered by either a triple of type (i) or one of type (ii). The type (i) triple would cover $x_i$ in the third component as well, while the type (ii) triple would not. Importantly, if $(\overline{x},i)$ is covered in $M$ by a type (ii) triple, then all $(\overline{x},j)$ in both components must be covered in $M$ by type (ii) triples too. Hence for each $x \in X$ there is exactly one tuple $\overline{x}\in\mathcal{S}$ which contains $x$ and is associated to elements $(\overline{x},i)$ covered by triples of type (i). These tuples form an exact cover of $(X,\mathcal{S})$; see Figure~\ref{fig:3d-matching-construction} for an illustration.

Conversely, take an exact cover $E \subseteq \mathcal{S}$. For each $\overline{x} \in E$, include all the $n$ triples of type~(i) associated to it in the matching. For all the other tuples in $\mathcal{S}$, include a minimal set of triples of type (ii), each taking a fresh atom in $\atoms$ as the third component. This can be done so that there are infinitely many atoms left unused in $B$; choose any bijection $\sigma$ with all of $\atoms$ and include the type (iii) tuple $(\sigma a, \sigma a, a) \in \atoms^3$ for each such unused $a$. 
\end{proof}


\section{Decidability from extreme amenability}\label{sec:amenability}

We now turn attention to those problems from Karp's list that remain decidable in the definable setting. First we mildly generalise the technique used in~\cite{KKOT15}, used there to prove that finite-template Constraint Satisfaction Problems are decidable. This generalisation will let us prove decidability for a whole range of Karp's problems in one stroke. The few remaining problems, which are not covered by the technique, will be dealt with in Sections~\ref{sec:other}-\ref{sec:weighted}.

The development in this section starts from a general observation that in many search problems, a desired solution for an input structure $G$ can be seen as an expansion of $G$ to a relational structure over an extended vocabulary. For example, a $k$-coloring of a graph is a structure which (in addition to the edge relation of $G$ itself) includes $k$ unary predicates that specify the color of each vertex; a vertex cover (or any solution where a subset of vertices is sought for) adds to $G$ a single unary predicate; and so on. Of course, not every structure that extends a given $G$ is a legal solution; it is subject to conditions that depends on the problem in question. We will show that if the class of legal solutions for an equivariant structure has a certain closure property, then it contains a solution which is equivariant itself. Searching for equivariant solutions is often easy, which guarantees decidability.

\subsection{Compact sets of structures}\label{sec:compactness}

In what follows, we will need to briefly consider other atoms than the equality atoms. We will focus on relational structures $\atoms$ which are {\em oligomorphic}, i.e., such for every number $k$ the set $\atoms^{(k)}$ has finitely many orbits with respect to the canonical action of the automorphism group $\aut(\atoms)$. This is a standard notion in model theory; see e.g.~\cite{hodges} for a thorough introduction. The notion of definable and equivariant set from Section~\ref{sec:prelims} transports to this more general setting with little change; the only difference is that first-order formulas used in the defining set-builder expressions can use, apart from atom equality, relations from the vocabulary of $\atoms$. 

In our context, the most important example other than equality atoms is that of {\em ordered atoms} $(\mathbb{Q},\leq)$, i.e.~the total order of the rationals, whose automorphisms are order-preserving bijections. We will also consider extensions of this structure with finitely many constants: for $a_1,\ldots,a_n\in\mathbb{Q}$, the automorphisms of $(\mathbb{Q},\leq,a_1,\ldots,a_n)$ are those monotone bijections on $\mathbb{Q}$ which fix each $a_i$.

The following lemmas make sense over any oligomorphic atom structure $\atoms$.

Fix a finite relational signature $\sigma$ and a definable, equivariant set $X$. The set $\Sigma$ of all $\sigma$-structures on $X$ comes equipped with a topology, with a basis of neighbourhoods of the form:
\[
	\mathcal{B}_{Y}(M) = \{N\in\Sigma \mid N|_Y= M|_Y\}
\]
for $Y\subseteq X$ finite and $M\in\Sigma$, where $M|_Y$ is the induced substructure of $M$. 

\begin{lem}\label{lem:compact-continuous}
The space $\Sigma$ is compact, and the canonical action of $\aut(\atoms)$ on $\Sigma$ is continuous.
\end{lem}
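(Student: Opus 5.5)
The plan is to identify $\Sigma$ with a product of finite discrete spaces and apply Tychonoff's theorem. A $\sigma$-structure on $X$ is determined by assigning, for each relation symbol $R\in\sigma$ of arity $r$ and each tuple $\bar{x}\in X^r$, a truth value saying whether $\bar{x}\in R$. This gives a bijection between $\Sigma$ and $\{0,1\}^{I}$, where $I=\biguplus_{R\in\sigma}X^{\mathrm{ar}(R)}$; the index set $I$ is countable, since $X$ is definable over a countable structure of atoms. Under this bijection a basic set $\mathcal{B}_Y(M)$ with $Y\subseteq X$ finite corresponds to fixing the coordinates indexed by all tuples with entries in $Y$, of which there are finitely many. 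Conversely, any basic cylinder of the product topology fixes finitely many coordinates, so it mentions finitely many elements of $X$ in total, and it is a finite union of sets $\mathcal{B}_Y(M)$ for $Y$ the finite set of those elements. Hence the two topologies coincide, and $\Sigma$ is compact by Tychonoff. Since $I$ is countable, one could also argue sequentially via a diagonal argument, avoiding full choice.

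Next comes the action. For $\pi\in\aut(\atoms)$ and $M\in\Sigma$, the structure $M\cdot\pi$ is defined by $\bar{x}\in R^{M\cdot\pi}$ if and only if $\bar{x}\cdot\pi^{-1}\in R^M$; this is again in $\Sigma$ because $X$ is equivariant. Continuity should be understood with respect to the product topology on $\aut(\atoms)\times\Sigma$, where $\aut(\atoms)$ carries the pointwise convergence topology, with basic neighbourhoods of $\pi$ given by $\{\rho \mid \rho|_S=\pi|_S\}$ for finite $S\subseteq\atoms$. (If only separate continuity of each map $M\mapsto M\cdot\pi$ is intended, it is immediate: preimages of basic sets $\mathcal{B}_Y(N)$ are basic sets $\mathcal{B}_{Y\cdot\pi^{-1}}(N\cdot\pi^{-1})$.)

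For joint continuity, fix $(\pi,M)$ and a basic neighbourhood $\mathcal{B}_Y(M\cdot\pi)$ of its image, with $Y$ finite.
\begin{enumerate}
\item Let $S$ be the union of the supports of the elements of $Y\cdot\pi^{-1}$ together with their images under $\pi$. This set is finite, since each element of $X$ has finite support.
\item Take $\rho$ agreeing with $\pi$ on this finite set, and in particular on $\bigcup_{y\in Y}\supp(y\cdot\pi^{-1})$.
\item Because an automorphism's action on an element depends only on its restriction to a support, $y\cdot\rho^{-1}=y\cdot\pi^{-1}$ for all $y\in Y$. Concretely, $\rho\pi^{-1}$ fixes $\supp(y\cdot\pi^{-1})\cdot\pi=\supp(y)$ pointwise, so $y\cdot\rho^{-1}=y\cdot\pi^{-1}$.
\item Take $N\in\mathcal{B}_{Y\cdot\pi^{-1}}(M)$.
\end{enumerate}
Then for tuples $\bar{y}$ over $Y$ we get $\bar{y}\in R^{N\cdot\rho}$ iff $\bar{y}\cdot\rho^{-1}\in R^N$ iff $\bar{y}\cdot\pi^{-1}\in R^M$ iff $\bar{y}\in R^{M\cdot\pi}$. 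So the open set $\{\rho\mid \rho|_S=\pi|_S\}\times\mathcal{B}_{Y\cdot\pi^{-1}}(M)$ maps into $\mathcal{B}_Y(M\cdot\pi)$.

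The only delicate point is step 3, the support fact that the action on an element depends only on the automorphism's values on a support. This is the standard property of finitely supported sets, so the main obstacle is bookkeeping with $\pi$ versus $\pi^{-1}$ rather than anything conceptual. Oligomorphicity is not really needed for this lemma; only countability and finite supports are used.
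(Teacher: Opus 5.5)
Your proof is correct. The compactness half is the same as the paper's: identify $\Sigma$ with $\prod_{R\in\sigma}\{0,1\}^{X^{\mathrm{arity}(R)}}$, check (using finiteness of $\sigma$) that the sets $\mathcal{B}_Y(M)$ generate the product topology, and apply Tychonoff.

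The continuity half is where you go further than the paper. The paper only records the identity $\mathcal{B}_Y(M)\cdot\pi=\mathcal{B}_{Y\cdot\pi}(M\cdot\pi)$, which is your parenthetical remark. That identity shows each fixed $\pi$ acts by a homeomorphism of $\Sigma$, but it says nothing about continuity in the group variable.

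Your argument gives joint continuity of $\aut(\atoms)\times\Sigma\to\Sigma$, with the pointwise-convergence topology on $\aut(\atoms)$. The key fact you use is that $z\cdot\rho=z\cdot\pi$ whenever $\rho$ and $\pi$ agree on a support of $z$. Joint continuity is what Pestov's theorem in Theorem~\ref{thm:pestovery} actually requires, since extreme amenability concerns jointly continuous actions of the topological group. So your version fills in a step the paper leaves implicit, at the cost of some $\pi$ versus $\pi^{-1}$ bookkeeping; your bookkeeping checks out.

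One small adjustment for the stated generality of arbitrary oligomorphic atoms: least supports need not exist there. Read $\supp(\cdot)$ in your steps 1--3 as ``some finite support'', for instance the atoms occurring in a defining expression, and use that $S\cdot\pi$ supports $z\cdot\pi$ whenever $S$ supports $z$. Nothing else in the argument changes. For the ordered atoms with constants used in Theorem~\ref{thm:pestovery}, least supports exist anyway.
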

\begin{proof}
The set $\Sigma$ is in bijection with $\prod_{R\in\sigma}\{0,1\}^{X^{\text{arity}(R)}}$,
and if $\sigma$ is finite then our topology is the product topology on this set, which is compact by Tychonoff's theorem. The action is continuous since $\mathcal{B}_{Y}(M)\cdot\pi = \mathcal{B}_{Y\cdot \pi}(M\cdot\pi)$ for every $\pi\in\aut(\atoms)$.
\end{proof}

For a structure $M\in\Sigma$ we write $\aage(M)$ to denote the $\aut(\atoms)$-closure of the set of finite induced substructures of $M$, and define a preorder $\sqsubseteq$ on $\Sigma$ by
\[M \sqsubseteq N \Longleftrightarrow \aage(M) \subseteq \aage(N).\]
For a structure $M$, the down-set $M\!\!\downarrow$ is the set of all $N$ such that $N\sqsubseteq M$.

\begin{lem}\label{lem:downset}
For every $M\in\Sigma$, the space ${M\!\!\downarrow}\subseteq\Sigma$ is equivariant and compact.
\end{lem}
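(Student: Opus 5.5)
Both claims follow from the definition of $\sqsubseteq$: equivariance from the fact that $\aage$ is invariant under atom automorphisms, and compactness from showing that $M\!\!\downarrow$ is a closed subset of the compact space $\Sigma$ (Lemma~\ref{lem:compact-continuous}).

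\emph{Equivariance.} First I check that $\aage(N\cdot\pi)=\aage(N)$ for every $\pi\in\aut(\atoms)$. Since $X$ is equivariant, the finite induced substructures of $N\cdot\pi$ are exactly $(N|_Y)\cdot\pi$ for finite $Y\subseteq X$, with $(N\cdot\pi)|_{Y\cdot\pi}=(N|_Y)\cdot\pi$. Taking $\aut(\atoms)$-closures, both sides give the same set. Hence $N\sqsubseteq M$ if and only if $N\cdot\pi\sqsubseteq M$, so $M\!\!\downarrow\cdot\pi=M\!\!\downarrow$.

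\emph{Compactness.} I show that the complement of $M\!\!\downarrow$ is open.
\begin{itemize}
\item Take $N\notin M\!\!\downarrow$. Then some finite induced substructure $N|_Y$ of $N$, with $Y\subseteq X$ finite, satisfies $N|_Y\notin\aage(M)$; since $\aage(M)$ is closed under automorphisms, no automorphic image of $N|_Y$ is a finite induced substructure of $M$.
\item Consider the basic neighbourhood $\mathcal{B}_Y(N)$. Every $N'\in\mathcal{B}_Y(N)$ has $N'|_Y=N|_Y$.
\item Therefore $N|_Y\in\aage(N')\setminus\aage(M)$, so $N'\notin M\!\!\downarrow$.
\end{itemize}
Thus $\mathcal{B}_Y(N)$ is disjoint from $M\!\!\downarrow$, so $M\!\!\downarrow$ is closed. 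A closed subset of the compact space $\Sigma$ is compact.

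The main subtlety is to state precisely what "finite induced substructure" means as an element of $\aage$. I treat it as the pair (the finite set $Y$, the structure on it), so that membership in $\aage(M)$ means: for some $\pi\in\aut(\atoms)$, we have $Y\cdot\pi\subseteq X$ and $M|_{Y\cdot\pi}=(N|_Y)\cdot\pi$. With this reading, both the invariance step and the openness step are immediate. Oligomorphicity is not needed for this lemma.
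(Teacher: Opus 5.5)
Your proof is correct and follows the same route as the paper: equivariance because $\aage$ is invariant under atom automorphisms, and compactness by showing that $M\!\!\downarrow$ is closed in the compact space $\Sigma$, using the basic neighbourhood $\mathcal{B}_Y(N)$ around any $N\notin M\!\!\downarrow$. Your observation that every $N'\in\mathcal{B}_Y(N)$ satisfies $N|_Y\in\aage(N')\setminus\aage(M)$ is exactly why that neighbourhood is disjoint from $M\!\!\downarrow$, and it states this more carefully than the paper's brief remark that $N|_Y\neq M|_Y$.
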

\begin{proof}
Equivariance is obvious: if $M,N$ are in the same $\aut(\atoms)$-orbit of $\Sigma$ then they are $\sqsubseteq$-equivalent. For compactness, by Lemma~\ref{lem:compact-continuous} it is enough to show that $M\!\!\downarrow$ is closed. Take any $N\not\sqsubseteq M$. Then for some finite $Y$ the structure $N|_Y$ is not in $\aage(M)$ so  $N|_Y\neq M|_Y$ and the open neighbourhood $\mathcal{B}_Y(N)$ is disjoint from $M\!\!\downarrow$.
\end{proof}

The following result would apply over any (oligomorphic) atoms which are \emph{Ramsey} in the sense of~\cite{KPT05}. However, we will only need it for the ordered atoms with finitely many pairwise distinct constants, so this is how we formulate it.

\begin{thm}\label{thm:pestovery}
Over atoms $\atoms = (\mathbb{Q},\leq,a_1, \ldots, a_n)$, for any $X$ and $\sigma$ as above, every down-set in $\Sigma$ contains an equivariant structure.
\end{thm}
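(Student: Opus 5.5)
The natural route is through extreme amenability of $G=\aut(\mathbb{Q},\leq,a_1,\ldots,a_n)$, in the spirit of~\cite{KPT05} and of the argument in~\cite{KKOT15}. Fix a down-set $D=M\!\!\downarrow$ in $\Sigma$. By Lemma~\ref{lem:downset}, $D$ is a nonempty compact space, and it is equivariant, so $G$ acts on it. By Lemma~\ref{lem:compact-continuous} this action is continuous; one should check that continuity holds jointly in the group element, where $G$ carries the pointwise-convergence topology. This is immediate because a basic neighbourhood $\mathcal{B}_Y(N)$ only depends on the finitely many atoms supporting $Y$. Hence $D$ is a $G$-flow. An equivariant structure in $D$ is exactly a fixed point of this flow, so it suffices to know that every $G$-flow has a fixed point, i.e.\ that $G$ is extremely amenable.

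The first key step is therefore extreme amenability of $G$. For $n=0$ this is Pestov's theorem that $\aut(\mathbb{Q},\leq)$ is extremely amenable. For general $n$, note that $G$ is isomorphic, as a topological group, to $\aut(\mathbb{Q},\leq)^{n+1}$. Indeed, assuming $a_1<\cdots<a_n$, the constants cut $\mathbb{Q}$ into $n+1$ open intervals, each order-isomorphic to $\mathbb{Q}$. A monotone bijection fixing all $a_i$ preserves each interval and acts on it independently, and the isomorphism respects the pointwise-convergence topologies. Finite products of extremely amenable groups are extremely amenable: given a $G_1\times G_2$-flow $Z$, the set of $G_1$-fixed points is closed and nonempty, and it is $G_2$-invariant since the two factors commute, so it is a $G_2$-flow and has a $G_2$-fixed point. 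Alternatively, one can cite the Kechris--Pestov--Todorcevic correspondence~\cite{KPT05}. The structure $(\mathbb{Q},\leq,\bar a)$ is ordered and its age, finite linear orders with $n$ named points, has the Ramsey property, which follows from the finite Ramsey theorem applied interval by interval.

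The second step is to conclude. Apply extreme amenability to the flow $D$ to get $N\in D$ with $N\cdot\pi=N$ for all $\pi\in G$, so $N$ is equivariant and lies in the given down-set.

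The main obstacle is the first step, since it is not proved earlier in the paper. I would cite Pestov (and~\cite{KPT05}) for $\aut(\mathbb{Q},\leq)$ and give the short product argument above. A direct alternative is a compactness argument. For each finite equivariant-closed fragment, use Ramsey's theorem to find a substructure of $M$ whose induced structure on a given finite set of atoms depends only on order type. Then use compactness of $D$, via the finite intersection property over finite $Y\subseteq X$, to glue these into an equivariant structure. I would fall back on this only if the topological citation is deemed insufficient.
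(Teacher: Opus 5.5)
Your proposal is correct and follows essentially the same route as the paper: restrict the continuous action to the nonempty, compact, equivariant set $M\!\!\downarrow$, invoke Pestov's theorem for $n=0$, and handle the constants via the topological isomorphism $\aut(\atoms)\cong\aut(\mathbb{Q},\leq)^{n+1}$ together with closure of extreme amenability under finite products. The only differences are that you prove the product closure directly (the set of $G_1$-fixed points is closed, nonempty and $G_2$-invariant), where the paper cites \cite[Lem.~6.7(iii)]{KPT05}, and that you explicitly verify joint continuity of the action, which the argument in Lemma~\ref{lem:compact-continuous} only sketches.
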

\begin{proof}
Consider any down-set ${M\!\!\downarrow}\subseteq\Sigma$. By Lemma~\ref{lem:downset} it is equivariant, so the continuous action from Lemma~\ref{lem:compact-continuous} restricts to it. By Pestov's theorem \cite{Pes98}, the group $\aut(\mathbb{Q},\leq)$ (considered as a topological group with the product topology) is {\em extremely amenable}: every continuous action of it on a nonempty compact space has a fixpoint. Fixpoints of the canonical action of $\aut(\atoms)$ are exactly equivariant elements. This proves the theorem for $n = 0$. For the general statement, note that $\aut(\atoms)$ is still extremely amenable; this follows from the isomorphism of topological groups $\aut(\atoms) \cong (\aut(\mathbb{Q},<))^{n+1}$, since the class of extremely amenable groups is closed under direct products (see \cite[Lem.~6.7(iii)]{KPT05}).
\end{proof}

\subsection{Decidability}

Our decidability proofs in this section will follow the pattern used in~\cite{KKOT15}. Given a problem instance $S$-definable over equality atoms, we:
\begin{itemize}
\item[(i)] See it as an equivariant structure over ordered atoms with constants from $S$;
\item[(ii)] Understand solutions to that instance as structures over a finite relational signature;
\item[(iii)] Show that if the set of legal solutions is non-empty then it contains a down-set in the sense of Sec.~\ref{sec:compactness};
\item[(iv)] Infer from Theorem~\ref{thm:pestovery} that if a solution exists then an equivariant one exists;
\item[(v)] Show that looking for an equivariant solution is a decidable problem.
\end{itemize}
In practice, a convenient way to ensure (iii) is to prove that the set of legal solutions is downwards-closed with respect to $\sqsubseteq$. In particular, this happens if the set of legal solutions is described by a (possibly infinite) family of finite forbidden structures, i.e., a family $(H_i)_{i\in I}$ of finite $\sigma$-structures such that a solution $M$ is legal if and only if $H_i\not\in\aage(M)$ for every $i\in I$.

As a first example, consider the restriction of \probref{CNFSat} to formulas with finite clauses. The decidability of this problem follows from~\cite[Thm.~3]{KKOT15}, but we reprove it here as an illustrative application of Theorem~\ref{thm:pestovery}.

\begin{prob}[\probname{3-Sat}]\label{prob:3-Sat}
\begin{description}
\item[Input] Definable CNF formula $\varphi$ with at most $3$ literals per clause
\item[Question] Is $\varphi$ satisfiable?
\end{description}
\end{prob}

\begin{thm}\label{thm:decidability-3-sat}
\probref{3-Sat} is decidable.
\end{thm}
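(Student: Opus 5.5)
We follow the five-step pattern (i)--(v). Let $\varphi$ be an $S$-definable CNF formula over a definable set $X$ of variables, with at most three literals per clause. The clauses form a definable family of subsets of $X\times\{0,1\}$ of size at most $3$.

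\emph{Steps (i) and (ii).} We view $X$ and $\varphi$ over ordered atoms $\atoms=(\mathbb{Q},\leq,a_1,\ldots,a_n)$, where $S=\{a_1,\ldots,a_n\}$ is identified with rationals. This is legitimate because an $S$-definable set over equality atoms is equivariant over these ordered atoms: every order-preserving bijection fixing the constants is an $S$-automorphism. We then encode both instance and valuation as $\sigma$-structures on the equivariant set $X$. The signature has:
\begin{itemize}
\item ternary relations $C_{\epsilon_1\epsilon_2\epsilon_3}$, one for each sign pattern $\epsilon\in\{0,1\}^3$, with $C_\epsilon(x,y,z)$ holding iff the clause with literals $(x,\epsilon_1),(y,\epsilon_2),(z,\epsilon_3)$ is in $\varphi$ (shorter clauses handled by repeating variables);
\item one unary predicate $T$ for true variables.
\end{itemize}
A valuation is then a structure $M\in\Sigma$ whose $C$-part is fixed to be that of $\varphi$.

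\emph{Step (iii): legal solutions form a down-set.} A structure $M$ is legal iff its $C$-part equals that of $\varphi$ and no clause is falsified. Falsification is a local condition: it is witnessed by $M|_Y$ for a set $Y$ of at most three variables. We would like to describe legality by forbidden finite substructures. The subtlety is the requirement that the $C$-part be \emph{exactly} $\varphi$, since $\aage$ only records finite substructures up to automorphism.

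To handle it, take any legal $M$ and any $N\in M\!\!\downarrow$. For each finite $Y$, $N|_Y=(M|_Y)\cdot\pi$ for some $\pi\in\aut(\atoms)$. Because the $C$-part of $M$ is equivariant, $\pi$ preserves it. Hence the $C$-part of $N|_Y$ agrees with that of $\varphi$ on $(Y\cdot\pi^{-1})\cdot\pi = Y$. Since this holds for every finite $Y$, the $C$-part of $N$ is exactly $\varphi$. Likewise, every triple $Y$ in $N$ is the image of a triple in $M$, so no clause is violated in $N$. Therefore $M\!\!\downarrow$ consists of legal solutions. This verification, that membership in the down-set preserves the fixed equivariant part, is the only delicate point, and I expect it to be the main (though modest) obstacle.

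\emph{Step (iv).} By Theorem~\ref{thm:pestovery}, $M\!\!\downarrow$ contains an equivariant structure. So if $\varphi$ is satisfiable at all, it has an equivariant satisfying valuation over the ordered atoms.

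\emph{Step (v): decidability.} An equivariant valuation is determined by a subset of the finitely many orbits of $X$ under $\aut(\mathbb{Q},\leq,S)$, and these orbits are computable from the defining expression. The algorithm enumerates all such subsets $T$, each of which is a definable set. For each one it checks whether every clause contains a literal true under $T$; this is a first-order sentence about definable sets and hence decidable, as recalled in Section~\ref{sec:prelims} (working over ordered atoms as in~\cite{atom-book}). Conversely, any equivariant solution found this way is in particular a solution over equality atoms. So the algorithm answers yes iff $\varphi$ is satisfiable.
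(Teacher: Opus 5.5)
Your proof is correct and follows the paper's five-step argument essentially verbatim. The one difference is that you also encode the clauses as ternary relations $C_\epsilon$ in the signature. The paper instead uses a single unary predicate and notes that satisfying valuations are cut out by an equivariant family of forbidden partial valuations; since the age is taken under the $\aut(\atoms)$-action on located substructures, the equivariant instance is preserved automatically, so your extra encoding is harmless but unnecessary. (A small slip: you should write $N|_Y=(M|_{Y\cdot\pi^{-1}})\cdot\pi$ rather than $(M|_Y)\cdot\pi$, as your next sentence in fact assumes.)
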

\begin{proof}
Take a $3$-CNF formula $\varphi$, $S$-definable over equality atoms, over an $S$-definable set $X$ of variables, for $S\subseteq\atoms$. (i) If we impose on $\atoms$ any ordering isomorphic to the total order of the rationals, $\varphi$ remains $S$-definable over ordered atoms. Adding the atoms from $S$ to $\atoms$ as constants, $\varphi$ becomes equivariant. 

(ii) A valuation for $\varphi$ (satisfying or not) can be seen as a predicate on $X$, i.e.~a structure over a signature $\sigma$ with a single predicate symbol. (iii) The set of satisfying valuations is described by an equivariant set of forbidden finite substructures, namely the partial valuations which invalidate one of the clauses of $\varphi$. As a result, the set of satisfying valuations for $\varphi$ is downwards-closed, so (iv) by Theorem~\ref{thm:pestovery} it either is empty or contains an equivariant valuation.

To check if $\varphi$ is satisfiable, it is therefore enough to look for a satisfying valuation which is equivariant over our extended atoms or, equivalently, $S$-definable over ordered atoms. (v) This is easy to do: there are only finitely many $S$-orbits of variables in $X$, and an $S$-definable valuation must be constant in every $S$-orbit, so there are finitely many valuations to consider, and for each of them it is easy to decide whether it satisfies every clause in $\varphi$.
\end{proof}
 
The same argument shows that $k$\probname{-Sat} is decidable for any $k$. It also means that \probref{CNFSat} is decidable when restricted to formulas containing only finite clauses. This is because every such formula $\varphi$, having an orbit-finite set of clauses, has an upper bound on the size of a clause, so it is an instance of $k$\probname{-Sat} for some $k$ (which can be computed from $\varphi$).
 
It may be illustrative to see where the argument fails for the unrestricted \probref{CNFSat}. Consider an equivariant CNF-formula with two clauses over $X=\{x_a\mid a\in\atoms\}$:
\[\textstyle
	\varphi = \left(\bigvee_{a\in\atoms}x_a\right) \land \left(\bigvee_{a\in\atoms}\neg x_a\right).
\]
Consider any satisfying valuation, understood as a structure $M$ on $X$ over a single predicate symbol. Assume that in the valuation infinitely many variables are false. (This does not lose generality: the symmetric case is that infinitely many variables are {\em true}.) Then $\aage(M)$ contains (among other things) all finite substructures where no element satisfies the predicate. Let $N$ be a structure on $X$ where no element satisfies the predicate. Then $N\sqsubseteq M$. But the valuation corresponding to $N$ does not satisfy $\varphi$, so the set of satisfying valuations does not contain $M\!\!\downarrow$ and Theorem~\ref{thm:pestovery} does not apply. Indeed, even though $\varphi$ has plenty of satisfying valuations, none of them are equivariant.

The same machinery directly applies to a few more problems. In all these, the input is a definable graph $G$.

\begin{prob}[\probname{VertexCover}]\label{prob:VertexCover}
\begin{description}
\item[Question] Does $G$ have a finite vertex cover?
\end{description}
\end{prob}

\begin{prob}[\probname{FeedbackVertexSet}]\label{prob:FeedbackVertexSet}
\begin{description}
\item[Question] Is there a finite set of vertices whose removal makes $G$ acyclic?
\end{description}
\end{prob}

\begin{prob}[\probname{FeedbackArcSet}]\label{prob:FeedbackArcSet}
\begin{description}
\item[Question] Is there a finite set of edges whose removal makes $G$ acyclic?
\end{description}
\end{prob}

In the finite setting of~\cite{Karp72}, the input to these problems includes a number $k$, and one asks whether a vertex cover (etc.) of size at most $k$ exists. We could do the same here, and we call these variants (defined in the obvious way) $k$-\probref{VertexCover}, $k$-\probref{FeedbackVertexSet}, and $k$-\probref{FeedbackArcSet}.

These variants, however, are decidable for very easy reasons: for a fixed $k$, there are only orbit-finitely many sets of vertices/edges of size $k$, and they can be effectively enumerated in the search for a solution. The non `$k$-' versions, as formulated above, are more interesting, but nevertheless:

\begin{thm}\label{thm:vertex-cover}
\probref{VertexCover}, \probref{FeedbackVertexSet} and \probref{FeedbackArcSet} are decidable.
\end{thm}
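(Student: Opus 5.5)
The plan is to skip the full amenability machinery for these three problems and use a direct support argument, which already gives an explicit finite candidate solution. Suppose $G=(V,E)$ is $S$-definable over equality atoms. Let $V_S\subseteq V$ be the set of vertices $v$ with $\supp(v)\subseteq S$, and $E_S\subseteq E$ the set of edges with support contained in $S$. Both are finite: they are unions of singleton $S$-orbits of the orbit-finite sets $V$ and $E$. Both are also computable from the defining expression of $G$. The central claim is:
\begin{itemize}
\item $G$ has a finite vertex cover iff $V_S$ is one;
\item $G$ has a finite feedback vertex set iff $V_S$ is one;
\item $G$ has a finite feedback arc set iff $E_S$ is one.
\end{itemize}
The ``if'' directions are trivial.

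For the ``only if'' directions I will use a single moving-away argument. Let $C$ be any finite solution and let $T=S\cup\supp(C)$, which is a finite set of atoms. Every element of a finite $T$-supported set has its support inside $T$, because its $T$-orbit is contained in that finite set and hence is a singleton.
\begin{itemize}
\item Vertex cover: suppose an edge $\{u,v\}$ had $u,v\notin V_S$. Pick an $S$-automorphism $\pi$ that maps the finitely many atoms of $(\supp(u)\cup\supp(v))\setminus S$ to atoms outside $T$. Since $E$ is $S$-supported, $\{u\cdot\pi,v\cdot\pi\}$ is still an edge. Each endpoint has $\supp(u\cdot\pi)=\supp(u)\cdot\pi$, which contains an atom outside $T$, so neither endpoint lies in $C$. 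This contradicts $C$ being a cover.
\item Feedback vertex set: apply the same argument to a finite cycle in $G$ that avoids $V_S$. All of its vertices have some atom outside $S$, so one $\pi$ moves the whole cycle to a cycle that avoids $C$.
\item Feedback arc set: apply the same argument with edges in place of vertices and $E_S$ in place of $V_S$.
\end{itemize}

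Given the claim, the decision procedure is as follows. Compute $V_S$ (respectively $E_S$), which is a finite explicit set. For vertex cover, check the definable sentence ``every edge has an endpoint in $V_S$'', which is decidable. For the two feedback problems, check that the definable graph $G\setminus V_S$ (respectively $G\setminus E_S$) is acyclic. Acyclicity amounts to the absence of an edge $(u,v)$ with $u$ reachable from $v$.

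I expect the main obstacle to be this acyclicity check, since reachability in an orbit-finite graph is not first-order definable by a single formula. I plan to handle it via the standard fact that the reflexive-transitive closure of a definable relation over equality atoms is computable. The iterates $R^{\le k}$ form an increasing chain of $S$-supported subsets of the orbit-finite set $V^2$, so the chain stabilises after finitely many steps, and stabilisation can be detected by comparing consecutive iterates for equality, which is decidable. An alternative, if one prefers to stay within this section's framework, is the observation that ``acyclic after removing a set'' is defined by forbidden finite substructures (the cycles). However, finiteness of the removed set is not downward closed, which is exactly why I will use the explicit support argument rather than Theorem~\ref{thm:pestovery}.
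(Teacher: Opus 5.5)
Your argument is correct, but it follows a different route from the paper. The paper handles these problems exactly like \probref{3-Sat}. It reinterprets the $S$-definable instance as an equivariant one over ordered atoms with constants from $S$, and views a finite solution of size $k$ as a structure $M$ (a unary predicate, or a binary one for arcs). It then notes that $M\!\!\downarrow$ contains only solutions, because $\aage(M)$ excludes the finite structures with more than $k$ marked elements and those with an uncovered edge or an unbroken cycle. Finally it invokes Theorem~\ref{thm:pestovery} (Pestov's extreme amenability of $\aut(\mathbb{Q},\le)$) to obtain an $S$-supported solution, leaving finitely many unions of $S$-orbits to check.

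Your moving-away argument replaces all of this with an elementary observation over equality atoms. A finite solution lives inside a finite set of atoms $T$, and any edge or cycle not built entirely from $S$-supported elements can be pushed off $T$ by an $S$-automorphism. This gives a self-contained proof with no topological dynamics and no detour through ordered atoms. It also produces directly the single canonical candidate $V_S$ (resp.\ $E_S$), which the paper only extracts later, in the proof of Theorem~\ref{thm:many-pspace}, as a consequence of the amenability argument. You also make the acyclicity test explicit via the stabilising transitive-closure iteration, which the paper leaves implicit. What the paper's route buys is uniformity: the same template covers \probref{3-Sat} and \probref{Colorability}, whose solutions are infinite and admit no such moving-away argument. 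Yours relies essentially on the solution being a finite set.

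Two small remarks. First, your closing justification for avoiding Theorem~\ref{thm:pestovery} is mistaken: finiteness \emph{is} downward closed for $\sqsubseteq$. If $M$ marks exactly $k$ elements, then no structure with $k+1$ marked elements lies in $\aage(M)$, so every $N\sqsubseteq M$ marks at most $k$ elements. This is exactly how the paper ensures that $M\!\!\downarrow$ consists of finite solutions, and it does not affect your own proof. Second, your acyclicity criterion (no edge $(u,v)$ with $u$ reachable from $v$) assumes directed graphs. That matches Karp's formulation of the feedback problems, but for undirected graphs it would need adjusting.
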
 
\begin{proof}
We proceed as for Thm.~\ref{thm:decidability-3-sat}, arguing only for part (iii). Consider \probref{VertexCover}. A choice of vertices in $G$ can be seen as a predicate $M$ on the set of all vertices. If $M$ is a finite vertex cover (of size, say, $k$) then $\aage(M)$ can be described by a set of forbidden finite structures: (a) those where more than $k$ vertices satisfy the predicate, and (b) those where neither end of some edge satisfies the predicate. This implies that $M\!\!\downarrow$ contains only finite vertex covers, and Thm.~\ref{thm:pestovery} applies.

For~\probref{FeedbackVertexSet}, the argument is very similar, except that the forbidden finite structures are (a) those where more than $k$ vertices satisfy the predicate, and (b) those with some cycle that does not visit any vertex that satisfies the predicate. Note that in this case there is no way to describe $\aage(M)$ with an {\em orbit-finite} set of forbidden structures. This, however, is not needed for showing that the set of legal solutions is downwards-closed.

For~\probref{FeedbackArcSet}, the situation is similar, except that a solution is a choice of edges rather than vertices. One way to represent this is with a binary relation $R$ on the set of all vertices. The set of forbidden finite structures that describes a given solution (of size $k$) is: (a) those where some two vertices related by $R$ are not connected by an edge, (b) those where $R$ contains more than $k$ pairs, and (c) those with some cycle that does not visit any edge in $R$.
\end{proof}

\begin{prob}[\probname{Colorability}]\label{prob:Colorability}
\begin{description}
\item[Question] Does $G$ have a vertex coloring with finitely many colors?
\end{description}
\end{prob}

\begin{prob}[\probname{CliqueCover}]\label{prob:CliqueCover}
\begin{description}
\item[Question] Is $V$ a disjoint union of finitely many cliques (where $G = (V,E)$)?
\end{description}
\end{prob}

These two problems are equivalent (replace $G$ with its complement to reduce one to the other), so let us focus on colorability. As for Problems~\ref{prob:VertexCover}-\ref{prob:FeedbackArcSet}, in the finite setting of~\cite{Karp72} the input includes a number $k$ and a $k$-coloring is sought for. Unlike for Problems~\ref{prob:VertexCover}-\ref{prob:FeedbackArcSet}, that problem remains interesting in our setting. Decidability of $k$-\probref{Colorability} follows from~\cite[Thm.~3]{KKOT15}, but it is easy to give an argument analogous to Thm.~\ref{thm:decidability-3-sat}. Here, $k$-colorings can be seen as structures over a signature with $k$ predicate symbols, and the set of legal $k$-colorings for any fixed $G$ is downwards-closed.

Unrestricted colorability easily follows from this:
\begin{thm}\label{thm:colorability}
\probref{Colorability} (therefore also \probref{CliqueCover}) is decidable.
\end{thm}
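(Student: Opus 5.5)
The plan is to reduce unrestricted \probref{Colorability} to $k$-\probref{Colorability} for a single number $k$ computable from $G$. Fix an instance $G=(V,E)$, $S$-definable over equality atoms, and let $k$ be the number of $S$-orbits of $V$ over ordered atoms with constants from $S$. As in the proof of Theorem~\ref{thm:decidability-3-sat}, $k$ is computable from the defining expression of $G$. The claim to prove is that $G$ has a coloring with finitely many colors if and only if $G$ has a $k$-coloring. Given that claim, decidability follows from the decidability of $k$-\probref{Colorability}. The equivalence with \probref{CliqueCover} then follows by taking complements, as remarked before the statement.

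One direction is trivial. For the other, suppose $G$ has an $m$-coloring for some finite $m$. Since $m$-colorings are $m$-ary unary-predicate structures whose legality is given by forbidden finite substructures (an edge with both ends of the same color, or a vertex with no color or two colors), the set of legal $m$-colorings is downwards-closed. By Theorem~\ref{thm:pestovery} applied over $(\mathbb{Q},\leq,S)$, there is then an equivariant $m$-coloring, that is, one which is $S$-definable over ordered atoms. Such a coloring is constant on each $S$-orbit of $V$, so it uses at most $k$ distinct colors. Renaming the used colors gives a $k$-coloring of $G$. Being a $k$-coloring does not depend on the ordering, so this is also a coloring of the original graph.

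An equivalent route is to decide the problem directly: enumerate the finitely many partitions of the ordered $S$-orbits of $V$ into color classes, and check whether some class contains an edge. Definable sets make this check decidable. By the argument above, $G$ is finitely colorable if and only if one of these partitions is proper.

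I expect the only delicate point to be justifying the choice of $k$. Imposing the order on $\atoms$ may split each equality-orbit into several ordered orbits, so $k$ must count the ordered $S$-orbits rather than the orbits over equality atoms. It must also be checked that this count is computable. That holds because ordered atoms are oligomorphic and definable sets over them can be decomposed into orbits effectively, exactly as used implicitly in step (v) of Theorem~\ref{thm:decidability-3-sat}.
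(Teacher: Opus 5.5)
Your proposal is correct and follows essentially the paper's argument. A finite coloring is an $m$-coloring for some $m$, and Theorem~\ref{thm:pestovery} then yields an equivariant one over ordered atoms with constants from $S$; that coloring is constant on orbits, so it uses at most as many colors as there are (computably many) orbits, and one enumerates and checks the finitely many orbit-constant candidates. Your explicit remark that the relevant orbits are the ordered $S$-orbits, not the equality orbits, is a correct refinement of what the paper leaves implicit.
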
 
\begin{proof}
Every finite coloring is a $k$-coloring for some $k$, so if one exists then an equivariant $k$-coloring exists. An equivariant coloring must be constant on every orbit of vertices, so it cannot use more colors than there are orbits. The number of orbits can be computed from $G$, and the set of candidate equivariant colorings can be enumerated and checked for legality.
\end{proof}


\section{Other decidable cases}\label{sec:other}

The key feature of the problems from Section~\ref{sec:amenability} was that, whenever they admit a solution to an $S$-definable instance, then an $S$-definable solution also exists. Some problems do not have this property, for example:
\begin{prob}[\probname{Clique}]\label{prob:Clique}
\begin{description}
\item[Input] Definable graph $G$
\item[Question] Does $G$ contain an infinite clique?
\end{description}
\end{prob}
Consider for instance a graph with ordered pairs from $\atoms^{(2)}$ as vertices, with edges between exactly those vertices that share the first component. This equivariant graph contains an infinite clique, indeed it is a disjoint union of infinite cliques, but none of these cliques are equivariant.

Nevertheless, the problem is decidable. This has already been proved for automatic graphs~\cite{KL10,Rub08}, but for definable graphs a more direct argument exists. We will use the following elementary property of equality atoms; here and in the following $v_1\sim_S v_2$ means that $v_1$ and $v_2$ are in the same $S$-orbit.

\begin{lem}\label{lem:basic-lemma}
Consider any finite $S\subseteq\atoms$, elements $v_1\sim_S v_2$ and an element $w$ such that
\[
	\supp(v_1)\cap\supp(w)\subseteq S \qquad \text{and}\qquad \supp(v_2)\cap\supp(w)\subseteq S.
\]
Then $\{v_1,w\}\sim_S\{v_2,w\}$.
\end{lem}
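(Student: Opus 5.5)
Since $v_1\sim_S v_2$, fix an $S$-automorphism $\pi$ with $v_1\cdot\pi=v_2$. This $\pi$ need not fix $w$, so the plan is to modify it on atoms outside $\supp(v_1)$ so that it also fixes $\supp(w)$ pointwise, while still sending $v_1$ to $v_2$. Because an atom automorphism only matters for $v_1$ through its restriction to $\supp(v_1)$, it is enough to build a bijection $\rho$ of $\atoms$ with three properties: it agrees with $\pi$ on $\supp(v_1)$, it is the identity on $S\cup\supp(w)$, and it is a bijection of $\atoms$.

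Write $A=\supp(v_1)$, $B=\supp(v_2)=\pi(A)$ and $W=\supp(w)$. The least support is equivariant, $\supp(v\cdot\pi)=\pi(\supp(v))$, so $\pi$ maps $A$ onto $B$; moreover, being an $S$-automorphism, $\pi$ fixes $A\cap S$ pointwise. Now define the partial map $\rho_0$ by $\rho_0=\pi$ on $A$ and $\rho_0=\id$ on $S\cup W$.

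The main step is to check that $\rho_0$ is a well-defined injective partial bijection.
\begin{itemize}
\item \emph{Well-definedness.} On the overlap $A\cap(S\cup W)$ we have $A\cap W\subseteq S$ by hypothesis, so the overlap equals $A\cap S$, where $\pi$ is the identity. Hence the two definitions agree.
\item \emph{Injectivity across the two parts.} We must show that no $a\in A\setminus S$ has $\pi(a)\in S\cup W$. Since $\pi$ is a bijection fixing $S$ and $a\notin S$, we get $\pi(a)\notin S$. Also $\pi(a)\in B$, and $B\cap W\subseteq S$, so $\pi(a)\notin W$.
\end{itemize}
Thus $\rho_0$ is a bijection between the finite sets $A\cup S\cup W$ and $B\cup S\cup W$. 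These sets have equal size, so their complements in $\atoms$ are both countably infinite (indeed cofinite). Therefore $\rho_0$ extends to a bijection $\rho$ of $\atoms$; for example, pick any bijection between the two complements. This is where equality atoms are used: any partial finite bijection extends to an automorphism.

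It remains to verify the conclusion. We have $\rho$ fixing $S$, and $w\cdot\rho=w$ because $\rho$ fixes $\supp(w)$ pointwise. We also need $v_1\cdot\rho=v_2$. Since $\rho$ and $\pi$ agree on $\supp(v_1)$, the composite $\rho\pi^{-1}$, or $\pi^{-1}\rho$ depending on the action convention, fixes $\supp(v_1)$ pointwise. By the defining property of supports it therefore fixes $v_1$, which gives $v_1\cdot\rho=v_1\cdot\pi=v_2$. Consequently $\{v_1,w\}\cdot\rho=\{v_2,w\}$, that is, $\{v_1,w\}\sim_S\{v_2,w\}$.

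I expect the obstacle to be nothing more than careful bookkeeping in the injectivity check, which is exactly where both support hypotheses and the fact that $\pi$ is an $S$-automorphism are needed.
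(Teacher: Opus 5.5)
Your proof is correct and follows the same route as the paper. Starting from an $S$-automorphism $\pi$ with $v_1\cdot\pi=v_2$, you replace it by one that agrees with $\pi$ on $\supp(v_1)$ but fixes $S\cup\supp(w)$ pointwise, and both support hypotheses are what make this possible. The paper asserts that such an automorphism exists in one line, whereas you also check that the finite partial map is a well-defined injection and extends to a bijection of $\atoms$.
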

\begin{proof}
Let $\pi$ be an $S$-automorphism such that $v_1\cdot\pi =v_2$. The value of $v_1\cdot\pi$ only depends on the action of $\pi$ on $\supp(v_1)$. The above assumptions imply that the set $\supp(w)\setminus S$ is disjoint from $\supp(v_1)$ and $\supp(v_2)$, so we can find an $S$-automorphism $\pi'$ that agrees with $\pi$ on $\supp(v_1)$ but fixes $\supp(w)\setminus S$ pointwise. Then $v_1\cdot\pi' =v_2$ and $w\cdot\pi'=w$ as required.
\end{proof}

\begin{thm}\label{thm:clique}
\probref{Clique} is decidable.
\end{thm}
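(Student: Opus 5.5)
We reduce the existence of an infinite clique to a finite, orbit-based combinatorial check that uses Lemma~\ref{lem:basic-lemma}. Let $G=(V,E)$ be $S$-definable. Suppose $G$ has an infinite clique $K$. Each vertex has a support of bounded size, since $V$ is orbit-finite and the size of $\supp$ is constant on orbits. We apply the $\Delta$-system lemma (sunflower lemma) to the infinite family $\{\supp(v)\cup S \mid v\in K\}$. This yields an infinite subclique $K'\subseteq K$ and a finite set $R\supseteq S$ such that the sets $\supp(v)\setminus R$, for $v\in K'$, are pairwise disjoint. Since there are only finitely many $R$-orbits in $V$, we may further pass to an infinite $K''\subseteq K'$ contained in a single $R$-orbit $O$.

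Next we extract a finite witness. Pick distinct $v_1,v_2\in K''$. They satisfy three conditions: $v_1\sim_R v_2$, the intersection $\supp(v_1)\cap\supp(v_2)$ is contained in $R$, and $\{v_1,v_2\}\in E$. Conversely, suppose we are given a finite $R\supseteq S$, an $R$-orbit $O$ and a pair $v_1,v_2\in O$ with these three properties. We then build an infinite clique $w_1,w_2,\ldots\in O$ with pairwise $R$-disjoint supports. Choose $w_1=v_1$, and at each step choose $w_{k+1}\in O$ whose support outside $R$ avoids all atoms used so far. This is possible because $\atoms$ is infinite, so we can apply an $R$-automorphism to $v_1$ that moves its non-$R$ atoms to fresh ones.

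For any pair $w_i,w_j$ we then use Lemma~\ref{lem:basic-lemma}, applied twice, to conclude $\{w_i,w_j\}\sim_R\{v_1,v_2\}$. Concretely, we first move $w_i$ to $v_1$ by an $R$-automorphism. The image of $w_j$ is then some $u\sim_R v_2$ whose support meets $\supp(v_1)$ only inside $R$, so the lemma gives $\{v_1,u\}\sim_R\{v_1,v_2\}$. Since $E$ is $S$-supported, and hence $R$-supported, $w_i$ and $w_j$ are adjacent. A small care point is that the lemma is stated for unordered pairs $\{v_1,w\}$, but edges may be ordered or undirected. We handle this by using the ordered version, which the same proof gives.

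The remaining issue, and the main obstacle, is effectiveness, because $R$ is unbounded a priori. We bound it: without loss of generality $R=S\cup\supp(v_1)\cap\supp(v_2)$, which is a subset of $S\cup\supp(v_1)$. We also observe that the witness condition depends only on the $S$-orbit of the triple $(R,v_1,v_2)$, so it suffices to check representatives. Hence $G$ has an infinite clique if and only if some $S$-orbit of edges $(v_1,v_2)$ satisfies the following condition, with $R=S\cup(\supp(v_1)\cap\supp(v_2))$: $v_1\sim_R v_2$. This is a first-order definable property of edges, and checking whether the definable set of edges satisfying it is nonempty is decidable by the closure properties from Section~\ref{sec:prelims}. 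Alternatively, we can enumerate the finitely many $S$-orbits of $E$ and test each representative directly.
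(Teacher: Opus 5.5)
Your proof is correct, but it handles the necessity direction differently from the paper. The paper first restricts to a single vertex orbit and uses Ramsey's theorem to pass to an infinite subclique all of whose edges lie in one orbit. Only then does it extract a common kernel. Because all intersections $\supp(v_i)\cap\supp(v_j)$ now have equal size, Deza's theorem forces them to coincide. A pigeonhole over the $|S|!$ permutations of the kernel $S$, together with equivariance of the criterion, then gives $v_i\sim_S v_j$.

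You skip Ramsey and Deza altogether. The infinite $\Delta$-system lemma applied directly to the supports gives the kernel $R$, and pigeonholing over the finitely many $R$-orbits gives two vertices in the same $R$-orbit. Since you need only \emph{one} witnessing edge rather than a homogeneous clique, that already suffices. This is shorter and more elementary. It also treats $S$-definable and directed graphs directly, whereas the paper's proof is written for equivariant undirected ones.

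The paper's route, in exchange, cleanly isolates the idea that the question is a property of a single edge orbit. Your sufficiency construction recovers this anyway: it yields a clique all of whose ordered pairs lie in the $R$-orbit of $(v_1,v_2)$. The sufficiency direction (fresh copies plus Lemma~\ref{lem:basic-lemma}) and the final reduction to checking finitely many orbit representatives are essentially the same in both.

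Two small points you should state explicitly:
\begin{itemize}
\item The family $\{\supp(v)\cup S\mid v\in K\}$ is infinite, so the $\Delta$-system lemma really yields an infinite subclique. This holds because only finitely many vertices are supported by any fixed finite $T\supseteq S$: they form singleton $T$-orbits of an orbit-finite set.
\item The constructed $w_k$ are pairwise distinct. This holds because $v_1\neq v_2$ and $v_1\sim_R v_2$ force $\supp(v_1)\not\subseteq R$.
\end{itemize}
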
 
\begin{proof}
A definable graph $G$ is orbit-finite, so any infinite clique in it must have an infinite intersection with one of the orbits. We may therefore focus on graphs with one orbit of vertices. Further, in an infinite clique, every edge $\{v_1,v_2\}$ can be colored with its orbit. There are finitely many orbits of edges, so by Ramsey's theorem the clique contains an infinite sub-clique where every edge is in the same orbit. It is therefore enough to decide, for a given orbit of edges in $G$, whether $G$ contains infinitely many vertices whose every pair belongs to that orbit.

Over equality atoms, this is easy to do: the condition holds for the orbit of $\{v_1,v_2\}$ if and only if $v_1\sim_S v_2$, where $S=\supp(v_1)\cap\supp(v_2)$. (This condition is easy to decide by looking at $v_1$ and $v_2$.)

To see this, first assume $v_1\sim_S v_2$. Since $v_1\neq v_2$, the set $\supp(v_1)$ (and $\supp(v_2)$) must contain some atoms outside of $S$. Replacing them with suitably chosen fresh atoms, we can find infinitely many pairwise different vertices $v_1,v_2,v_3\ldots$, all in the same $S$-orbit, so that $\supp(v_i)\cap\supp(v_j)=S$ for all $i\neq j$. For any four pairwise distinct numbers $i,j,i',j'$, applying Lemma~\ref{lem:basic-lemma} twice we get
\[
	\{v_i,v_j\} \sim_S \{v_{i'},v_j\} \sim_S \{v_{i'},v_{j'}\}.
\] 
This implies that all pairs $\{v_i,v_j\}$ for $i\neq j$ are in the same $S$-orbit, hence in the same orbit.

In the other direction, assume an infinite set $v_1,v_2,\ldots$ of vertices such that the pairs $\{v_i,v_j\}$ are in the same orbit for all $i\neq j$. The size of $\supp(v_i)$ and $S_{ij} = \supp(v_i)\cap\supp(v_j)$ does not depend on $i$ and $j$, so by basic combinatorics (specifically, by Deza's sunflower lemma~\cite{deza74}) the intersection $S_{ij}$ itself does not depend on $i$ and $j$; call this shared intersection $S$. For any fixed $i\neq j$, since $v_i$ and $v_j$ are in the same orbit, there is an atom automorphims $\pi_{ij}$ that preserves $S$ set-wise and such that $v_i\cdot\pi_{ij}=v_j$. 
Keeping $i$ fixed and choosing more than $|S|!$ different $j$'s, there are some $j\neq j'$ such that $\pi_{ij}$ and $\pi_{ij'}$ agree on $S$. Then the composition of $\pi_{ij}^{-1}$ and $\pi_{ij'}$ maps $v_j$ to $v_{j'}$ and fixes $S$ pointwise, hence $v_j\sim_S v_{j'}$. But the property ``$v\sim_S w$ for $S=\supp(v)\cap\supp(w)$'' is preserved by arbitrary atom automorphisms, so -- since all $\{v_i,v_j\}$ are in the same orbit -- we obtain $v_i\sim_S v_j$ for all $i\neq j$.
\end{proof}

The following was proved decidable for automatic structures in~\cite{Koc14} with a direct argument, rather than by an immediate reduction to~\probref{Clique}:

\begin{prob}[\probname{SetPacking}]\label{prob:SetPacking}
\begin{description}
\item[Input] Definable set $X$, definable family $\mathcal{S}$ of subsets of $X$
\item[Question] Does $\mathcal{S}$ contain an infinite, pairwise disjoint subfamily?
\end{description}
\end{prob}
\begin{thm}\label{thm:setpacking-decidability}
\probref{SetPacking} is decidable.
\end{thm}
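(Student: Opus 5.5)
The plan is to reduce \probref{SetPacking} directly to \probref{Clique}. Given a definable instance $(X,\mathcal{S})$, form the graph $G$ whose vertex set is $\mathcal{S}$ and which has an edge between $Y\neq Y'$ exactly when $Y\cap Y'=\emptyset$. By the closure properties recalled in Section~\ref{sec:prelims}, $G$ is definable and its defining expression is computable from those of $X$ and $\mathcal{S}$. Its vertex set is $\mathcal{S}$, a definable set. Its edge relation is the definable set
\[
\{\{Y,Y'\} \mid Y,Y'\in\mathcal{S},\ Y\neq Y',\ \neg\exists x\in X\,(x\in Y\land x\in Y')\},
\]
which uses only the extended syntax of bounded quantifiers and membership.

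Next we check that the reduction is correct. An infinite pairwise disjoint subfamily of $\mathcal{S}$ is exactly an infinite set of vertices of $G$ that are pairwise adjacent, i.e.\ an infinite clique. One subtlety needs attention: the empty set, if it belongs to $\mathcal{S}$, is disjoint from itself. However, a family is a set, so $\emptyset$ can occur in a subfamily at most once. Hence the correspondence between infinite disjoint subfamilies and infinite cliques is exact, and nothing special needs to be done.

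Decidability then follows immediately from Theorem~\ref{thm:clique}. The only step that needs any care is effectiveness: the expression defining the edge relation must be computable, including the inner quantifier over $X$. This is covered by the general principles cited from~\cite[Chap.~10]{atom-book}, so I do not expect any real obstacle.
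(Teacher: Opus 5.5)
Your proposal is correct and is exactly the paper's proof: build the graph on $\mathcal{S}$ with an edge between two distinct sets whenever they are disjoint, then apply Theorem~\ref{thm:clique}. Your remark about $\emptyset\in\mathcal{S}$ and your check that the edge relation is effectively definable are sound details that the paper leaves implicit.
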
 
\begin{proof}
Given $X$ and $\mathcal{S}$, solve \probref{Clique} for a graph with $\mathcal{S}$ as the set of vertices, and an edge between two sets if and only if they are disjoint.
\end{proof}

Here is another problem that does not fall into the scope of Section~\ref{sec:amenability}:
\begin{prob}[\probname{SetCovering}]\label{prob:SetCovering}
\vspace{-\topsep}
\begin{description}
\item[Input] Definable set $X$, definable family $\mathcal{S}$ of subsets of~$X$
\item[Question] Does $\mathcal{S}$ contain a finite subfamily $C$ whose union is $X$?
\end{description}
\end{prob}
In~\cite{KL10}, a variant of this problem where $C$ is required to be co-infinite (i.e. such that $\mathcal{S}\setminus C$ is infinite) rather than finite, was proved decidable for all automatic structures. The technique used there does not seem applicable to our formulation, but at least on definable instances the following easy argument works.
\begin{thm}
\probref{SetCovering} is decidable. \label{thm:setcovering-decidable}
\end{thm}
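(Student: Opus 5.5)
The plan is to show that a finite cover exists if and only if a specific, computable finite subfamily of $\mathcal{S}$ already covers $X$. Let $(X,\mathcal{S})$ be $S$-definable for a finite $S\subseteq\atoms$. Since both sets are orbit-finite, we can compute two bounds from the defining expressions: $d$, a bound on $|\supp(x)\setminus S|$ for $x\in X$, and $m$, a bound on $|\supp(Y)\setminus S|$ for $Y\in\mathcal{S}$.

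For a finite $T\supseteq S$, let $\mathcal{S}_T$ be the set of those $Y\in\mathcal{S}$ with $\supp(Y)\subseteq T$, and let $U_T=\bigcup\mathcal{S}_T$. Two easy facts will be used.

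\begin{itemize}
\item $\mathcal{S}_T$ is finite. Over equality atoms, an element of an orbit-finite set whose support lies in the finite set $T$ can be in only finitely many $S$-orbits, and within each orbit is determined by where its support goes inside $T$.
\item The map $T\mapsto U_T$ is monotone, and up to $S$-automorphism it depends only on $|T\setminus S|$.
\end{itemize}

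Any finite cover $C\subseteq\mathcal{S}$ is contained in $\mathcal{S}_T$ for $T=S\cup\bigcup_{Y\in C}\supp(Y)$. Hence a finite cover exists if and only if $U_T=X$ for some finite $T\supseteq S$.

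The key step is a stabilisation claim. Fix $T_0=S\cup A$ with $A$ a set of $N=d+m$ fresh atoms. I claim that if $U_T=X$ for some $T$, then $U_{T_0}=X$. By monotonicity we may assume $|T\setminus S|\ge N$.

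To prove the claim, take any $x\in X$ and put $B=\supp(x)\cap T_0$. Choose an $S$-automorphism $\sigma$ that maps $T_0$ into $T$ and maps $\supp(x)\setminus T_0$ outside $T$. Then $x\cdot\sigma\in U_T$, so some $Y\in\mathcal{S}$ with $\supp(Y)\subseteq T$ contains $x\cdot\sigma$.

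The set $\supp(Y)\setminus(S\cup\sigma(B))$ has at most $m$ atoms, and all of them lie in $T$. Since $|T_0\setminus(S\cup B)|\ge N-d=m$, we can now redefine $\sigma$ on $T_0\setminus(S\cup B)$ so that its image contains these atoms. This leaves $\sigma$ unchanged on $\supp(x)$, so $x\cdot\sigma$ is the same element and still belongs to $Y$. Then $Y\cdot\sigma^{-1}\in\mathcal{S}$ (as $\mathcal{S}$ is $S$-supported), it contains $x$, and its support lies in $T_0$. So $x\in U_{T_0}$.

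This re-choice of $\sigma$ after $Y$ is known is the delicate point. One has to check that atoms of $\supp(Y)$ shared with $\supp(x)$ cannot lie outside $\sigma(T_0)$, because $\sigma$ sends $\supp(x)\setminus T_0$ outside $T$. I expect this bookkeeping to be the main obstacle.

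Given the claim, the decision procedure is direct. Compute the finite family $\mathcal{S}_{T_0}$, for instance as the finitely many $T_0$-supported elements in each $S$-orbit of $\mathcal{S}$. Then decide whether its union equals $X$, which is an equality test between definable sets. If it does, $\mathcal{S}_{T_0}$ is itself a finite cover. If it does not, the claim shows that no finite cover exists.
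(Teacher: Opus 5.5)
Your proof is correct, but it takes a different route from the paper.

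You prove a stabilisation lemma: a finite cover exists if and only if the canonical finite family $\mathcal{S}_{T_0}$ already covers $X$, where $\mathcal{S}_{T_0}$ consists of all members of $\mathcal{S}$ supported by $S$ together with $d+m$ fixed fresh atoms. The step you flagged as delicate does go through. Every atom of $D=\supp(Y)\setminus(S\cup\sigma(B))$ lies in $T$, hence outside $\sigma(\supp(x)\setminus T_0)$, and by definition it lies outside $S\cup\sigma(B)$. So $D$ is disjoint from $\sigma(S\cup\supp(x))$, and the modified partial map is still injective and agrees with $\sigma$ on $S\cup\supp(x)$. Incidentally, you never need $\sigma$ to map $T_0$ into $T$, nor the assumption $|T\setminus S|\ge N$.

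The paper instead restricts to a single infinite orbit $\mathcal{O}$ of $X$ and proves a local criterion. A finite cover exists if and only if there are $x\in Y\in\mathcal{S}$ with $x\in\mathcal{O}$ and $\supp(x)\cap\supp(Y)\subseteq S$. Necessity is a pigeonhole argument: some element of $\mathcal{O}$ has support avoiding, modulo $S$, the finitely many supports of the cover. For sufficiency, homogeneity (essentially Lemma~\ref{lem:basic-lemma}) shows that such a $Y$ contains every element of $\mathcal{O}$ whose support avoids $\supp(Y)$ modulo $S$. Hence $|\supp(x)|+1$ sets in the orbit of $Y$ with pairwise disjoint supports cover $\mathcal{O}$.

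The paper's route gives more structure, namely a cover by few copies of a single set per orbit. Its test only inspects orbits of pairs $(x,Y)$ with $x\in Y$, which is exactly what the \PSPACE bound in Section~\ref{sec:complexity} builds on. Your route is more uniform: no splitting into orbits and no separate treatment of finite orbits. It also yields an a priori bound of $d+m$ fresh atoms on the total support of some cover. The price is that your algorithm must materialise the possibly large family $\mathcal{S}_{T_0}$ and test the equality $\bigcup\mathcal{S}_{T_0}=X$, which is less direct.
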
 
\begin{proof}
Clearly we can reduce to the case where $X$ is a single infinite orbit. Assume that a finite family $C\subseteq \mathcal{S}$ covers $X$. Every set in $C$ has some finite support, and the union of these supports for all sets in $C$ is also finite, so there must be some $x\in Y\in C$ such that $\supp(x)$ and $\supp(Y)$ are disjoint modulo $\supp(X,\mathcal{S})$ (by which we mean their intersection lies in $\supp(X,\mathcal{S})$; this is empty if the instance is equivariant).

On the other hand, assume that such $x\in Y\in \mathcal{S}$ exist. Then $Y$ contains {\em all} $z\in X$ such that $\supp(z)$ and $\supp(Y)$ are disjoint (again, and in the following, this is modulo $\supp(X,\mathcal{S})$). Take $Y_1,Y_2\ldots,Y_{|\supp(x)|+1}$ in the orbit of $Y$ with pairwise disjoint supports. Then every element of $X$ has support disjoint from some of the $Y_i$, so it belongs to $Y_i$. Hence all the $Y_i$'s jointly cover $X$.

Finally, it is easy to effectively search for $x\in Y\in \mathcal{S}$ as above.
\end{proof}

This gives an alternative decidability proof of \probref{VertexCover} (Problem~\ref{prob:VertexCover}), via a straightforward reduction used already by Karp~\cite{Karp72}. The argument in Theorem~\ref{thm:vertex-cover} is still worth making though, as it exhibits additional structure in the space of vertex covers that is missing in the more general case of set coverings.


\vspace{-1ex}

\section{Weighted problems}\label{sec:weighted}

A few problems on Karp's list involve adding up sets of numbers, be it weights of graph edges, penalties in job sequencing etc. This poses an obvious difficulty in generalising these problems to infinite structures. One may try to follow the idea that we used in Sections~\ref{sec:amenability}-\ref{sec:other} and, rather than comparing various quantities to a fixed input number $k$, require them to be finite (or infinite). In the process, however, the essence of a weighted problem usually seems to be lost.

Consider, for example, the problem of finding a minimal Steiner tree in a weighted graph. In the finite formulation~\cite{Karp72}, given an edge-weighted graph $G$, a subset $W$ of its vertices and a number $k$, one asks whether $G$ has a tree of total weight at most $k$ that spans all the vertices in $W$. In an infinite setting, for a ``total weight'' to make sense, one has to restrict attention to graphs with non-negative weights. Then one could ask whether a spanning tree with a {\em finite} total weight exists. (The problem with a given bound $k$ is easily decidable for the same reason as Problems~\ref{prob:VertexCover}-\ref{prob:FeedbackArcSet}.)
This is a valid  question, but the values of weights are lost in it: it is equivalent to asking whether a spanning tree exists that uses only finitely many edges with non-zero weights. The problem then simplifies to:

\begin{prob}[\probname{SteinerTree}]\label{prob:SteinerTree}
\begin{description}
\item[Input] Definable graph $G=(V,E)$, definable subsets $W\subseteq V$ and $F\subseteq E$
\item[Question] Does $G$ have a tree that spans all vertices from $W$ and uses finitely many edges from $F$?
\end{description}
\end{prob}

\begin{thm}
\probref{SteinerTree} is decidable. \label{thm:steinertree-decidable}
\end{thm}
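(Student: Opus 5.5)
The plan is to reduce the problem to finitely many reachability questions in definable graphs, which I expect can be answered effectively. Let the instance $(G,W,F)$ be $S$-definable. Write $H=(V,E\setminus F)$ for the graph of ``free'' edges. The key observation is the following equivalence. $G$ has a tree spanning $W$ that uses finitely many edges from $F$ if and only if there is a finite $F_0\subseteq F$ such that all of $W$ lies in a single connected component of $H\cup F_0=(V,(E\setminus F)\cup F_0)$. The forward direction is clear: the $F$-edges of the tree serve as $F_0$. For the converse, take a spanning tree of the component containing $W$; a countable connected graph always has one. This tree uses only edges of $E\setminus F$ and of $F_0$. The degenerate case $W=\emptyset$ is answered trivially.

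Next I make connectivity in $H$ definable. Let ${\approx}\subseteq V\times V$ be the reachability relation of $H$, that is, the reflexive--transitive closure of its edge relation. Let $R_k$ be the relation ``connected by a path of length at most $k$''. Each $R_k$ is $S$-definable, and its expression is computable from $G$ and $F$. Every $R_k$ is an $S$-supported subset of the orbit-finite set $V\times V$, so it is a union of $S$-orbits of $V\times V$. Hence the increasing chain $R_0\subseteq R_1\subseteq\cdots$ can strictly grow only finitely many times. Once $R_{k+1}=R_k$ the chain has stabilised, and this equality is decidable. Therefore ${\approx}=R_k$ for a computable $k$, and ${\approx}$ is an $S$-definable equivalence relation. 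The quotient $Q=V/{\approx}$, the image $W'\subseteq Q$ of $W$, and the graph $G_F$ on $Q$ are all definable and computable. Here $G_F$ joins two classes whenever some $F$-edge connects them.

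With this, the condition of the first paragraph becomes a question about $W'$ and $G_F$. A finite $F_0$ touches only finitely many components of $H$, so the condition forces $W'$ to be finite. In that case it suffices that all elements of $W'$ lie in one connected component of $G_F$. Any two of them are then joined by a finite path in $G_F$, and there are only finitely many such pairs. So finitely many $F$-edges, one for each step of each path, suffice for $F_0$. Conversely, if $W'$ is finite but spread over different components of $G_F$, then no set of $F$-edges whatsoever, finite or not, can connect them.

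Both remaining conditions are decidable. A definable set is finite if and only if each of its $S$-orbits is a singleton, i.e.\ each element is supported by $S$, and this can be read off the orbit decomposition. Connectivity of the finitely many points of $W'$ in $G_F$ is decided by computing the reachability relation of $G_F$ via the same stabilisation argument and checking that it relates all pairs from $W'$. The main obstacle is this stabilisation argument: one must check carefully that each $R_k$ and the stabilisation test are effectively computable from the defining expressions. I expect this to follow from the closure properties of definable sets recalled in Section~\ref{sec:prelims}, namely composition of definable relations and decidable equality of definable sets, combined with orbit-finiteness of $V\times V$.
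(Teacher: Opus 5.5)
Your proposal is correct and follows essentially the paper's route: compute the connected components of $(V,E\setminus F)$ as an effectively computable fixpoint, then check that $W$ meets only finitely many of them and that $W$ lies in a single component of $G$. The differences are minor: you justify the reformulation and the stabilisation of $R_0\subseteq R_1\subseteq\cdots$ explicitly where the paper only asserts or cites them; you test finiteness of $W'$ directly (every element $S$-supported) instead of invoking \probref{CliqueCover} on the restricted closure; and your connectivity test (all of $W'$ in one component of $G_F$) is the exact condition, whereas the paper's parenthetical asks for the stronger property that all of $G$ be connected.
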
 
\begin{proof}
One needs to check if ($G$ is connected and) the vertices from $W$ fall into finitely many connected components of the graph $(V,E\setminus F)$. To this end, compute its transitive closure (such fixpoint calculations are effective; see~\cite{kl19} for a general study or~\cite{ks16} for an implementation), restrict to the connected components with vertices from $W$, and solve \probref{CliqueCover} (Problem~\ref{prob:CliqueCover}).
\end{proof}
Admittedly, the essence of the original weighted problem is lost to some extent in this formulation. For other problems, it gets worse. Karp's problem of finding a maximal cut in a weighted graph becomes:

\begin{prob}[\probname{MaxCut}]\label{prob:MaxCut}
\begin{description}
\item[Input] Definable graph $G=(V,E)$, definable subset $F\subseteq E$ of edges
\item[Question] Is there a subset $W\subseteq V$ such that $F$ contains infinitely many edges between $W$ and $V\setminus W$?
\end{description}
\end{prob}
Here even the set $E$ becomes irrelevant. It is easy to see that the condition holds if and only if $F$ is infinite, which is obviously decidable.

The remaining three problems on Karp's list~\cite{Karp72} are: knapsack, job sequencing and number partitioning. These seem even less open to infinite generalisations than the two above, so we leave them untreated.


\section{Complexity considerations}\label{sec:complexity}

Our main focus has been on decidability, but one naturally wonders also about the complexity of the decidable infinite versions of Karp's problems. The picture, unfortunately, is a little more fiddly.

The first difficulty is that it begins to matter how we choose to represent definable sets, and the choice is not obvious. In Section~\ref{sec:prelims} we decided to use \emph{set-builder expressions}: each definable set is represented by such an expression together with a valuation on its free variables. The advantage of this representation is that many basic constructions on definable sets can be produced in polynomial time (see \cite[Chap.~4]{atom-book}). However, some very basic operations are still computationally difficult: even checking emptiness of a given set is \PSPACE-complete, since it amounts - over the equality atoms - to deciding the first order theory of equality\footnote{\ We could restrict our expressions to use only quantifier-free formulas, but checking equality of definable sets would still be \PSPACE-complete - see Exercise 160 in \cite{atom-book}.}.

Moreover, it might be argued that set-builder expressions do not, from a complexity perspective, extend quite the right classical problems: when used to represent finite structures, they can sometimes be exponentially more compact than traditional representations. In a sense, as evidenced by the proof of Theorem~\ref{thm:nexphard} below, they correspond better to \emph{succinct graph problems}, as introduced by Galperin and Widgerson \cite{Galp84}, than to Karp's originals. 

For an alternative representation, one could postulate that a definable set is either an atom, represented as itself, or a set, represented by a finite set of atoms (the support) and a list of representatives of orbits. These representatives are themselves definable sets, each of which is represented using the same scheme.

In the finite case, this encoding essentially agrees with classical ones. However, it causes other unpleasant problems: even computing binary Cartesian products can result in an exponential blow-up (see~\cite[Sec.~10.3]{BKL14}), and modest sets such as $\atoms^n$ have representations exponentially long with respect to $n$, which may make computations on them look deceptively efficient. 

For consistency, we continue to represent definable sets with set-builder expressions as in Section~\ref{sec:prelims}, with the understanding that a different representation scheme could result in a different complexity landscape. Moreover, we should specify a few details of this representation, which did not matter before, but regrettably do now.

Firstly, we intend only to use the basic language introduced in Section~\ref{sec:prelims}; the syntactic sugar mentioned there will live only on the page, for expositional purposes.
Secondly, we will assume that the representation of $(a_1, \ldots, a_n)$ has size linear in the sum of the sizes of the chosen representatives of $a_1, \ldots, a_n$. If pairs are represented by Kuratowski pairs and tuples as nested pairs, this just means that we assume right-associative bracketing: for instance, we bracket $(a_1, a_2, a_3)$ as $(a_1, (a_2, a_3))$. Another choice could cause an unwanted exponential blow-up.

\subsection{Upper bounds}

All our problems will be either \PSPACE-complete or \NEXP-complete. We start with upper bounds, using the algorithms described before (or minor variants thereof). Before that, however, we should review the complexity of a few basic operations. Rather than providing a general but involved statement (see~\cite[Chap.~10]{atom-book} for that), we select  operations which in combination will be sufficiently powerful, yet relatively easy to work with. The following results apply over both equality and ordered atoms.

Some elementary computations on definable sets are obvious. For instance, in linear time we can compute unions of sets,  tell the difference between atoms and sets, and find finite supports (not necessarily least supports) of given sets. For the latter, it is enough to list all the atoms syntactically present in the defining expression.

\begin{lem}\label{lem:pspace-basics} There are \PSPACE algorithms which, given definable sets $A,B$ and a finite set $S\subseteq\atoms$:
\begin{enumerate}[(a)]
    \item check whether $A$ is empty;
    \item decide whether $A = B$, $A\in B$, $A\subseteq B$;
    \item return the $S$-orbit of $A$;
    \item list\,\footnote{\ The number of $S$-orbits in $A$ may be exponential in the size of $(A,S)$. We assume, therefore, that a suitable model of computation is used, where the algorithm outputs the representatives one at a time on a designated tape. In particular, the outputs should individually be of polynomial size.} the $S$-orbits in $A$, assuming that $S$ supports $A$;
    \item check whether $S$ supports $A$;
    \item return the least support $\supp(A)$.
\end{enumerate} \label{lem:basic-complexity-results}
\begin{proof} 
For (a), the only interesting case is that of a single set-builder expression
\begin{align}\label{eq:setexp}
	\{e(\bar{x},\bar{y}) \mid \bar{x}\in\atoms, \varphi(\bar{x},\bar{y})\}[\bar{a}]
\end{align}
where $\overline{y}$ (which we assume disjoint from $\overline{x}$) lists the free variables; $\overline{a}$ is the tuple of atoms assigned to them.
 Here checking emptiness amounts to checking satisfiability for the first-order formula $\varphi(\bar{x},\bar{a})$, which can be done in \PSPACE both for equality and ordered atoms.

For (b), the three algorithms are defined by mutual recursion on the syntactic structure of expressions. For example, to check whether
\[
	e[\bar{a}] \in \{e'(\bar{x},\bar{y}) \mid \bar{x}\in\atoms, \varphi(\bar{x},\bar{y})\}[\bar{b}],
\]
one needs to check if there is some tuple of atoms $\bar{c}$, of length the same as $\bar{x}$, such that $\varphi(\bar{c},\bar{b})$ holds and $e[\bar{a}]=e'[\bar{c},\bar{b}]$. This only depends on the $\bar{a}\bar{b}$-orbit of $\bar{c}$, or equivalently on the \emph{order type}\footnote{\ Meaning the equalities and order relations of the atoms in $\bar{c}$ with those in $\bar{a}$ and $\bar{b}$.} of $\bar{c}$ over $\bar{a}\bar{b}$, so there are only finitely many essentially different candidates for $\bar{c}$ and they can be exhaustively searched. For other cases and more details, see~\cite[Thm.~10.11]{atom-book}.

For (c), note the $S$-orbit of $A = e[\overline{a}]$ is exactly the set of all $e[\overline{b}]$ such that $\overline{b}$ has the same order type over $S$ as $\overline{a}$. But we may write a formula (using fresh variables assigned to elements of $S$) which expresses this, and thus describe this orbit by a set-builder expression.

For (d), the interesting case is $A$ represented by a single set-builder expression as in~\eqref{eq:setexp}. 
For any element $e[\overline{b},\overline{a}]$, its $S$-orbit consists of all $e[\overline{c},\overline{a}]$ where $\overline{c},\overline{a}$ has the same order type over $S$ as $\overline{b},\overline{a}$. Again, we may write a formula which, when adjoined to $\phi(\overline{x},\overline{y})$, expresses this, and thereby obtain a representative of this orbit.
To list $S$-orbits, then, we need to list order-types over $S$, and for each adjoin a formula as above. Because we are starting from the order-type, we do need to check for emptiness, but by (a) this is not an issue. We may also list the same orbit twice, but the problem statement allows this.

For (e), use (c) and (b) to check if the $S$-orbit of $A$ is equal to $\{A\}$.

For (f), first let $S$ contain all the atoms present in the defining expression for $A$; this is an overapproximation of $\supp(S)$. Then, for each subset of $S$ we may use (e) to check if it supports $A$.
\end{proof}
\end{lem}

The following is both a useful lemma and an example of how the operations above may be combined:

\begin{lem} We can determine in \PSPACE whether a graph $G$ admits a path from a vertex $u$ to a vertex $v$. \label{lem:reachability-pspace}
\begin{proof} Let $S=\supp(G,u,v)$, and note that if there is such a path then there is one in which no two vertices lie in the same $S$-orbit. In \PSPACE we can compute $S$ and then count the $S$-orbits of vertices. The idea is then to proceed by guessing a path of length at most this number and beginning at $u$, accepting if we reach $v$.

At any stage we must simply keep track of how many edges we have traversed and which vertex $w$ we are at: to guess a next vertex, we compute $S'=\supp(G,u,v,w)$, list the $S'$-orbits of edges in $G$, guess one, and take its representative $(x,y)$. After verifying that $x = w$ (reject otherwise) we take $y$ as our new vertex. This procedure runs in \NPSPACE, hence the problem is in \PSPACE by Savitch's Theorem \cite{savitch70}.

One subtlety here is that at each stage, we do not exactly guess a next vertex, but instead guess an orbit of vertices under an appropriate support, and take one from that orbit deterministically. As such, we are not guaranteed to be able to guess any given (short) path $P$ from $u$ to $v$. However, we can guess a path such that the $S$-orbit of the $i$th and $(i+1)$st vertices is the same as that of $(P_i, P_{i+1})$ for all $i$, and this will do.
\end{proof}
\end{lem}

We are now ready to study algorithms for Karp's problems. We begin with \probref{3-Sat}. Recall from Section \ref{sec:amenability} that if an $S$-supported \probref{3-Sat} formula  has a satisfying valuation then it has an $S$-supported one (over the ordered atoms). The algorithm suggested by this is to enumerate all such valuations, testing each in turn. We claim this is in \NEXP.

\begin{thm} \probref{3-Sat} $\in$ \NEXP. \label{thm:3sat-in-nexp}
\end{thm}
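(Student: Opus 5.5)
Given a $3$-CNF formula $\varphi$ over a variable set $X$, let $S$ be the finite set of atoms occurring syntactically in the expressions for $\varphi$ and $X$. This $S$ supports both and is computable in linear time. By Theorem~\ref{thm:decidability-3-sat}, if $\varphi$ is satisfiable then it has a satisfying valuation that is $S$-supported over the ordered atoms $(\mathbb{Q},\leq)$ with constants from $S$. Here we fix an arbitrary ordering of the atoms, which does not change $S$-definability of $\varphi$. Such a valuation is constant on the $S$-orbits of $X$ over ordered atoms. The algorithm therefore nondeterministically guesses a truth value for each ordered $S$-orbit of $X$ and then verifies, in exponential time, that every clause is satisfied.

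For the guessing step, use Lemma~\ref{lem:pspace-basics}(d) over ordered atoms to list the $S$-orbits of $X$. A \PSPACE algorithm runs in exponential time, so there are at most exponentially many listed representatives, each of polynomial size (possibly with repetitions, which is harmless). The machine writes them down and guesses one bit per representative. Different representatives of the same orbit may receive inconsistent bits. To rule this out, the machine checks, for each pair of representatives, whether they lie in the same $S$-orbit using parts (c) and (b) of the lemma, and rejects on conflicting bits. This is exponentially many \PSPACE checks, hence exponential time. The result is a well-defined valuation $v$, given by a table of exponential size.

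For verification, use Lemma~\ref{lem:pspace-basics}(d) again to list the $S$-orbits of clauses of $\varphi$, again exponentially many. Since $v$ is $S$-supported and $\varphi$ is $S$-supported, the truth of a clause under $v$ is constant on its $S$-orbit, so checking one representative per orbit suffices. For a representative clause $C$, a subset of $X\times\{0,1\}$ with at most $3$ elements, extract its literals by listing the $\supp(C)$-orbits of $C$ (each literal is a singleton orbit). For each literal $(x,b)$, find the orbit of $x$ in the table by testing membership of $x$ in each stored orbit via (c) and (b). Then check whether some literal agrees with $v$. All of this costs exponentially many \PSPACE calls, so the whole procedure is in \NEXP.

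The main obstacle is ensuring that the guessed object is a genuine well-defined valuation and that looking it up at arbitrary variables is efficient. Both rest on the orbit-listing and orbit-equality primitives of Lemma~\ref{lem:pspace-basics}. A further subtlety is that these primitives must be applied over ordered atoms, where $S$-orbits of $X$ are finer than over equality atoms. This is not a problem, since Lemma~\ref{lem:pspace-basics} holds for both atom structures, but the number of orbits (via order types over $S$) must still be verified to be only singly exponential in the input size.
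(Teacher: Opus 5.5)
Your proposal is correct and follows essentially the same route as the paper: fix a syntactic support $S$, use the extreme-amenability argument to restrict attention to $S$-supported valuations over ordered atoms, guess a truth value per listed $S$-orbit of variables, and verify one representative clause per $S$-orbit using the \PSPACE primitives of Lemma~\ref{lem:pspace-basics}. Your explicit handling of duplicate orbit representatives, and of extracting and looking up literals, fills in details that the paper's proof leaves implicit.
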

\begin{proof} We work over the ordered atoms. Given $\varphi$, take a support $S$ and list the (exponentially many) $S$-orbits of variables in $\varphi$. 
Then nondeterministically guess a subset of these orbits, and seek to verify that these form a satisfying valuation. To do so, list representatives of an orbit of clauses. For each of those, extract its literals, test whether they any of them is set to true 
and thereby determine whether the clause (and hence any other in its orbit) is satisfied. This process also takes exponential time.
\end{proof}

Similar analysis would apply (with a few details changed) to \probref{VertexCover}, \probref{FeedbackVertexSet}, \probref{FeedbackArcSet}, \probref{Colorability}, and \probref{CliqueCover}. However, for these problems we can do better, because we do not need to remember an exponentially long solution:

\begin{thm} \probref{VertexCover}, \probref{FeedbackVertexSet}, \probref{FeedbackArcSet}, \probref{Colorability}, \probref{CliqueCover} $\in$ \PSPACE. \label{thm:many-pspace}
\end{thm}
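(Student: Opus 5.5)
We follow the pattern of Theorem~\ref{thm:3sat-in-nexp}, but avoid ever writing down a whole equivariant solution. By Section~\ref{sec:amenability}, each of these problems, on an instance supported by $S$ (over equality atoms, hence over ordered atoms with constants $S$), has a solution if and only if it has one that is $S$-supported over the ordered atoms. An $S$-supported solution is determined by a choice on each $S$-orbit: a subset of orbits of vertices for \probref{VertexCover} and \probref{FeedbackVertexSet}, a subset of orbits of edges for \probref{FeedbackArcSet}, and a colour per orbit for \probref{Colorability}. There may be exponentially many orbits, but each orbit is presented by a polynomial-size representative, listed by Lemma~\ref{lem:pspace-basics}(d), and membership of an element in a given orbit is decidable in \PSPACE by Lemma~\ref{lem:pspace-basics}(b),(c). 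The key idea is that legality of such a solution can be expressed as the absence of a short ``bad witness'', and such witnesses can be found in \PSPACE by querying the solution only orbit by orbit. This yields a co-nondeterministic, or alternating, polynomial-space check. \probref{CliqueCover} reduces to \probref{Colorability} by complementing the edge relation, which is a polynomial operation on expressions.

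For \probref{VertexCover}, an $S$-supported set of vertices is finite only if it is a union of singleton $S$-orbits, i.e.\ of $S$-supported vertices. So we choose the solution $C$ to be the set of all vertices supported by $S$: any finite $S$-supported cover is contained in it, and $C$ is itself finite. It then suffices to check that every edge orbit has an endpoint in $C$. We list edge orbits, take a representative $(x,y)$, and test with Lemma~\ref{lem:pspace-basics}(e) whether $S$ supports $x$ or $y$; all of this runs in \PSPACE. \probref{FeedbackVertexSet} and \probref{FeedbackArcSet} use the same maximal choice: delete all $S$-supported vertices, respectively all edges whose endpoints are both $S$-supported. What remains is to check acyclicity of the resulting definable graph $G'$. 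This graph is again given by a polynomial expression, since ``supported by $S$'' is first-order expressible over ordered atoms via the formula for the $S$-orbit from Lemma~\ref{lem:pspace-basics}(c). $G'$ has a cycle if and only if some edge $(x,y)$ has a path from $y$ back to $x$. We enumerate edge orbits and invoke Lemma~\ref{lem:reachability-pspace}, which suffices because the reachability relation is equivariant.

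For \probref{Colorability}, the number of colours can be bounded by the number of $S$-orbits, but storing a colouring would take exponential space. Instead, we would show that a single canonical choice works. Two vertices in the same $S$-orbit must receive the same colour in an $S$-supported colouring, so an $S$-supported proper colouring exists if and only if no edge joins two $S$-equivalent vertices. Given that condition, colouring each orbit by its own colour is proper. This condition is checked in \PSPACE: list edge orbits, and for each representative $(x,y)$ test whether $y$ belongs to the $S$-orbit of $x$.

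The main obstacle is justifying the canonical choices. For \probref{VertexCover} and the feedback problems this means showing that finite $S$-supported solutions consist of $S$-supported elements, and that enlarging a solution preserves legality. Both facts are straightforward, since finite equivariant sets over ordered atoms with constants consist of fixed points. A second obstacle is making sure that constructing $G'$ and using Lemma~\ref{lem:reachability-pspace} on it stay polynomial. If the direct expression for $G'$ turns out to be awkward, I would instead inline the support test into the guessing procedure of Lemma~\ref{lem:reachability-pspace}, rejecting any step that uses a deleted vertex or edge.
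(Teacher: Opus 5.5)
Your proposal is correct and is essentially the paper's proof: for \probref{VertexCover} and the two feedback problems you take the maximal finite $S$-supported candidate (all $S$-supported vertices, respectively edges) and test acyclicity of what remains along the lines of Lemma~\ref{lem:reachability-pspace}, and you reduce \probref{Colorability} (hence \probref{CliqueCover}) to checking that no edge joins two vertices of the same $S$-orbit. One small caution is that your aside about an ``alternating'' polynomial-space check should be dropped, since alternating polynomial space equals exponential time; nothing in your final argument depends on it, because the canonical choices make the check a plain \PSPACE procedure.
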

\begin{proof} Let us start with \probref{VertexCover}. Consider a definable graph $G$ over the ordered atoms, with support $S$. We know from Section~\ref{sec:amenability} that if $G$ has a finite vertex cover then it has an $S$-supported one. But then each vertex in that cover is $S$-supported, so the set of all (finitely many) $S$-supported vertices is a vertex cover. Hence, to test whether $G$ admits a finite vertex cover, it is enough to verify that every edge has an $S$-supported end-vertex, which can be done in \PSPACE.

Similarly, if we instead ask for a finite feedback vertex set, the only candidate we care about is the set of all $S$-supported elements. To check if this is a feedback vertex set, however, we need to check that there is no cycle using only non-$S$-supported elements. This is easy to do in \PSPACE along the same lines as in Lemma \ref{lem:reachability-pspace}. The case of \probref{FeedbackArcSet} is similar, except that we are dealing with edges rather than vertices.

\probref{Colorability} reduces to checking that there is no edge with both end-vertices in the same $S$-orbit. This is easy to do in \PSPACE. \probref{CliqueCover} is, as before, equivalent to \probref{Colorability}.
\end{proof}

A few more problems remain, which were shown decidable in Section~\ref{sec:other} by ad-hoc methods. We treat them individually:

\begin{thm} \probref{Clique} $\in$ \PSPACE.
\end{thm}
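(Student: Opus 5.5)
We follow the decision procedure from the proof of Theorem~\ref{thm:clique} and check that each of its steps can be carried out in polynomial space, using the operations of Lemma~\ref{lem:pspace-basics}. That proof shows the following. A definable graph $G$ contains an infinite clique if and only if there is an edge $\{v_1,v_2\}$ of $G$ with $v_1\sim_T v_2$, where $T=\supp(v_1)\cap\supp(v_2)$. The proof was written for equivariant instances. For an $S$-definable $G$ we run the same argument relative to $S$: use $S$-orbits throughout and take $T=(\supp(v_1)\cap\supp(v_2))\cup S$. Lemma~\ref{lem:basic-lemma} and the sunflower argument both go through modulo $S$. The condition is invariant under $S$-automorphisms, so it depends only on the $S$-orbit of the edge.

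The algorithm runs as follows.
\begin{itemize}
\item Compute $S=\supp(G)$ by Lemma~\ref{lem:pspace-basics}(f), or simply take the atoms occurring in the expression.
\item Enumerate, one at a time, representatives of the $S$-orbits of the edge set $E$ using Lemma~\ref{lem:pspace-basics}(d). The output is exponential in size, but each representative is of polynomial size, so we reuse space between representatives.
\item For each representative $\{v_1,v_2\}$, extract $v_1$ and $v_2$ (or the ordered pair, depending on how edges are encoded). Compute $\supp(v_1)$ and $\supp(v_2)$ by (f), and form $T$.
\item Decide $v_1\sim_T v_2$: compute the $T$-orbit of $v_1$ by (c), then test membership of $v_2$ in it by (b).
\item Accept if some representative passes the test.
\end{itemize}
Each step is a composition of \PSPACE procedures on polynomial-size objects, so the whole algorithm runs in \PSPACE.

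The main obstacle I expect is not space but the correctness of relativising to a nonempty support $S$. In particular, the reduction to a single orbit of vertices and the Ramsey step must be done with $S$-orbits, and the proof must still show that if $v_1\sim_T v_2$ then infinitely many fresh copies pairwise form edges in the same $S$-orbit. I would handle this by rereading the proof of Theorem~\ref{thm:clique} with every orbit replaced by an $S$-orbit; alternatively, I would treat the atoms of $S$ as constants, which makes the instance equivariant over atoms with constants. A minor point remains: extracting components of a representative and computing supports must stay polynomial under the right-associated tuple encoding. This follows from the size bound on the listed representatives.
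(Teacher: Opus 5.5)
Your proposal is correct and follows the paper's proof. Both enumerate representatives of the orbits of edges and, for each one, test the criterion from Theorem~\ref{thm:clique} ($v_1\sim_T v_2$, with $T$ the intersection of the two supports) using the \PSPACE operations of Lemma~\ref{lem:pspace-basics}. You also relativise to a nonempty support $S$ via $T=(\supp(v_1)\cap\supp(v_2))\cup S$, or equivalently treat $S$ as constants; this fills in a point the paper leaves implicit, since it states the criterion only for equivariant graphs.
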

\begin{proof} We showed in Theorem~\ref{thm:clique} that a definable, undirected, equivariant graph $G$ admits an infinite clique if and only if it contains an edge $\{u,v\}$ with $u,v$ in the same $\supp(u) \cap \supp(v)$-orbit. For a given edge, this can certainly be checked in \PSPACE, and so we can simply list representatives of orbits of edges, checking the condition for each in turn.
\end{proof}

\begin{thm} \probref{SetPacking} $\in$ \PSPACE.
\end{thm}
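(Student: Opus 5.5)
The plan is to reduce \probref{SetPacking} to \probref{Clique}, as in Theorem~\ref{thm:setpacking-decidability}, and then run the \PSPACE procedure for \probref{Clique} without ever writing down the disjointness graph as a separate object. Given $(X,\mathcal{S})$, consider the graph $G$ whose vertex set is $\mathcal{S}$, with an edge between $Y_1\neq Y_2$ exactly when $Y_1\cap Y_2=\emptyset$. Its defining expression is
\[
	\{\{Y_1,Y_2\}\mid Y_1,Y_2\in\mathcal{S},\ Y_1\neq Y_2,\ \neg\exists z\in X.\,(z\in Y_1\land z\in Y_2)\}.
\]
Written in the basic language, this expression has size polynomial in the input. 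The expressions for $\mathcal{S}$ and $X$ are plugged in as the ranges of the bound variables, and the quantifier $\exists z\in X$ together with the two memberships is unfolded into the first-order formula. This unfolding is the only place where care is needed.

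The \PSPACE algorithm then follows the previous theorem. First compute $T=\supp(X,\mathcal{S})$ using Lemma~\ref{lem:pspace-basics}(f). Then list representatives of the $T$-orbits of edges of $G$ using (d). For each representative $\{Y_1,Y_2\}$, check that it really is an edge, namely that $Y_1\neq Y_2$ and that $Y_1\cap Y_2$ is empty; this uses (b) and (a). Next, compute $S'=\supp(Y_1)\cap\supp(Y_2)$ via (f), and test whether $Y_1\sim_{S'} Y_2$. That test uses (c) to produce the $S'$-orbit of $Y_1$ and (b) to check membership of $Y_2$ in it. Accept if some representative passes.

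Correctness comes from Theorem~\ref{thm:clique}. That proof was stated for equivariant graphs, so in the $T$-supported case it has to be read relative to $T$. Adding the atoms of $T$ as constants makes the instance equivariant, and then the criterion becomes: $u$ and $v$ lie in the same $(\supp(u)\cap\supp(v))\cup T$-orbit. I would therefore use $S'\cup T$ in the test above. Each step works on polynomial-size objects and stores only one representative at a time, so the whole procedure runs in \PSPACE.

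I expect the main obstacle to be the size bound in the first paragraph: showing that the expression for the disjointness graph, once the quantifiers $\exists z\in X$ and the memberships $\in\mathcal{S}$ are unfolded into plain first-order formulas over atoms, stays polynomial. I would handle this by never materialising $G$ at all. The edge-orbit listing would instead range over $T$-orbits of pairs in $\mathcal{S}\times\mathcal{S}$, obtained from (d) applied to $\mathcal{S}$ with pairs formed as Kuratowski pairs, and disjointness would be checked on each candidate by the recursive algorithm of (b). The listing step does need $\mathcal{S}\times\mathcal{S}$ as a set-builder expression, but that expression is clearly polynomial under the right-associative tuple convention.
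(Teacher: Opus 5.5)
Your proposal is correct and follows the paper's proof. Like the paper, you run the \probref{Clique} criterion from Theorem~\ref{thm:clique} on the disjointness graph without writing down its edge set: list representatives of orbits of pairs of sets, discard the non-disjoint ones, and test the remaining ones. Your explicit use of $(\supp(Y_1)\cap\supp(Y_2))\cup T$ for a $T$-supported instance is a correct refinement of a point the paper leaves implicit, since it states the criterion only for equivariant graphs.
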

\begin{proof} Run the algorithm above on the graph constructed in Theorem \ref{thm:setpacking-decidability}. We do not need to write down the set of edges at any point, since we can simply list representatives of orbits of pairs of vertices, and check for each in turn whether they should be edges (ignoring them if not).
\end{proof}

\begin{thm} \probref{SetCovering} $\in$ \PSPACE.
\end{thm}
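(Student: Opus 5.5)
We will turn the decidability argument of Theorem~\ref{thm:setcovering-decidable} into a \PSPACE procedure. Let $T=\supp(X,\mathcal{S})$, computed in \PSPACE by Lemma~\ref{lem:pspace-basics}(f). First we decompose $X$ into its $T$-orbits using Lemma~\ref{lem:pspace-basics}(d). We then treat each orbit $O$ separately, using the family $\{Y\cap O\mid Y\in\mathcal{S}\}$. A finite cover of $X$ exists iff each orbit has one, since finitely many orbits means a finite union of finite subfamilies.

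Orbits with $\supp(x)\subseteq T$ are finite (indeed singletons), and for these it suffices that some $Y\in\mathcal{S}$ contains $x$. That is the membership question $\exists Y\in\mathcal{S}.\ x\in Y$. We decide it by listing $T$-orbits of $\mathcal{S}$, or equivalently by checking non-emptiness of a definable set, via Lemma~\ref{lem:pspace-basics}(a),(b),(d).

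For an infinite orbit $O$, the proof of Theorem~\ref{thm:setcovering-decidable} shows that a finite cover exists iff there are $x\in O$ and $Y\in\mathcal{S}$ with $x\in Y$ and $\supp(x)\cap\supp(Y)\subseteq T$. The backward direction there used copies $Y_i$ of $Y$ with pairwise disjoint supports modulo $T$, which requires only that the argument is relative to $T$.

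To search for such a pair in polynomial space, we enumerate representatives of the $T$-orbits of the definable set $\{(x,Y)\mid x\in O, Y\in\mathcal{S}, x\in Y\}$ one at a time (Lemma~\ref{lem:pspace-basics}(d)). The condition ``$\supp(x)\cap\supp(Y)\subseteq T$'' is invariant under $T$-automorphisms, so testing one representative per orbit suffices. For each representative we compute $\supp(x)$ and $\supp(Y)$ with Lemma~\ref{lem:pspace-basics}(f) and test the intersection. Every step reuses space, and the enumeration outputs polynomial-size representatives one at a time, so the whole procedure runs in \PSPACE. Alternatively we could guess the orbit nondeterministically and invoke Savitch's theorem.

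We expect the main obstacle to be making sure the correctness claim holds modulo $T$ rather than for the equivariant case only; the remaining bookkeeping is routine. In particular, we must check that for an infinite orbit the disjoint-support copies $Y_1,\dots,Y_{|\supp(x)|+1}$ exist. This holds because $\supp(Y)\setminus T$ can be moved to fresh atoms by $T$-automorphisms. We must also check that every element of $O$, whose support has at most $|\supp(x)\setminus T|$ atoms outside $T$, is disjoint modulo $T$ from some $Y_i$ and hence belongs to it. That step uses Lemma~\ref{lem:basic-lemma}: any such $z$ is $\supp(Y)$-equivalent to $x$.
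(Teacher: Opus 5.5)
Your proposal is correct and follows essentially the same route as the paper: compute $T=\supp(X,\mathcal{S})$, enumerate representatives of the $T$-orbits of membership pairs $(x,Y)$ with $x\in Y\in\mathcal{S}$, and test $\supp(x)\cap\supp(Y)\subseteq T$ on each representative via Lemma~\ref{lem:pspace-basics}, with correctness inherited from Theorem~\ref{thm:setcovering-decidable}. Your separate check for singleton orbits, that each $T$-supported $x\in X$ lies in some $Y\in\mathcal{S}$, is a welcome addition: the paper's stated criterion quantifies only over infinite orbits and so omits this check.
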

\begin{proof} In Theorem \ref{thm:setcovering-decidable}, we show that an instance $(X,\mathcal{S})$ with least support $S$ admits a finite set covering exactly if for each infinite $S$-orbit $\mathcal{O}$ in $X$, there is $Y \in \mathcal{S}$ and $x \in Y \cap \mathcal{O}$ such that $\supp(x) \cap \supp(Y) \subseteq S$.

We may therefore compute $S=\supp(X,\mathcal{S})$, and for each $S$-orbit $\mathcal{O}$ in $X$ check whether it is infinite (i.e.~contains at least two elements). The set of pairs $(x,Y)$ such that $x\in Y\in\mathcal{S}$ is definable, so we can list (the representatives of) its $S$-orbits. For each such pair we can check in \PSPACE that $\supp(x) \cap \supp(Y) \subseteq S$ and that the $S$-orbit of $x$ is infinite.
\end{proof}

Finally, we turn to the weighted problems, which we did not think translated very naturally to the infinite setting. The first was \probref{SteinerTree}:

\begin{thm} \probref{SteinerTree} $\in$ \PSPACE.
\end{thm}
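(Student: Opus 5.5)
We follow the algorithm from Theorem~\ref{thm:steinertree-decidable}, but avoid computing the transitive closure of $(V,E\setminus F)$ explicitly, replacing it by calls to the reachability procedure of Lemma~\ref{lem:reachability-pspace}. Work over ordered atoms and let $S=\supp(G,W,F)$, computable in \PSPACE by Lemma~\ref{lem:pspace-basics}(f). A tree spanning $W$ with finitely many $F$-edges exists iff (1) all vertices of $W$ lie in one connected component of $G$, and (2) $W$ meets only finitely many connected components of $H=(V,E\setminus F)$. We check each condition separately.

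For (1), it suffices to consider pairs $(u,v)\in W^2$ up to $S$-orbits. Connectivity between $u$ and $v$ is an equivariant property over $S$, so we list representatives of the $S$-orbits of $W^2$ (Lemma~\ref{lem:pspace-basics}(d)). For each representative we call the undirected version of Lemma~\ref{lem:reachability-pspace}, obtained by symmetrising $E$, which is a syntactic operation. This is a polynomial number of reuses of polynomial space.

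For (2), we mimic the proof of Theorem~\ref{thm:colorability}. Let $\approx$ denote connectivity in $H$. It is an $S$-supported equivalence relation on $W$, and we need to know whether $W/{\approx}$ is finite. The complement graph, in which $u,w\in W$ are adjacent iff $u\not\approx w$, admits a finite colouring iff $W/{\approx}$ is finite. By the criterion used in the proof of Theorem~\ref{thm:many-pspace} (an equivariant colouring exists iff no edge joins two vertices in the same $S$-orbit), this amounts to checking that every pair $u,w\in W$ in the same $S$-orbit satisfies $u\approx w$. We verify this directly: list representatives of the $S$-orbits of pairs $(u,w)\in W^2$, check in \PSPACE whether $u\sim_S w$ (Lemma~\ref{lem:pspace-basics}(c),(b)), and if so call reachability in $H$ on $(u,w)$. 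Either condition failing is detected by a single witness pair, and all steps compose within polynomial space.

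The main obstacle is justifying that the colourability criterion transfers to this implicitly defined graph. That graph is $S$-definable only through the reachability relation, which we never write down as an expression. I would argue the criterion semantically rather than syntactically. If some $u\sim_S w$ has $u\not\approx w$, then the orbit of $u$ meets infinitely many classes, by applying $S$-automorphisms and an argument in the style of Lemma~\ref{lem:basic-lemma}. Conversely, if $\sim_S$ refines $\approx$ on $W$, then $W/{\approx}$ has at most as many classes as $W$ has $S$-orbits, which is finite.
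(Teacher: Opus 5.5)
Your proposal is correct and follows essentially the paper's route: both reduce the finiteness condition to checking that each $S$-orbit of $W$ lies in a single connected component of $(V,E\setminus F)$, and both verify this (and the connectivity condition) by enumerating orbit representatives and calling Lemma~\ref{lem:reachability-pspace}, with your support/colourability justification replacing the paper's fresh-element argument. Two caveats: since you work over ordered atoms, argue necessity via least supports of the $\approx$-classes rather than Lemma~\ref{lem:basic-lemma}, which is specific to equality atoms (or simply stay over equality atoms, where that lemma gives the dichotomy directly); and your condition (1), that $W$ lies in one component of $G$, is actually more accurate than the paper's check that all of $G$ is connected, while your only slip is calling the number of $S$-orbits of $W^2$ polynomial, since it can be exponential, which is harmless because the enumeration reuses space.
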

\begin{proof} The input consists of a graph $G = (V,E)$, a set $W \subseteq V$ and a set $F \subseteq E$. We argued in Theorem \ref{thm:steinertree-decidable} that we must simply check that $G$ is connected and that the vertices of $W$ fall into finitely many connected components of $(V,E \setminus F)$.

To check that $G$ is connected, we can take representatives of pairs of vertices and look for a path joining each via Lemma \ref{lem:reachability-pspace}. To check that the vertices of $W$ fall into finitely many connected components of $(V,E \setminus F)$, take a joint support $S$ of everything in the input, and note that it suffices to check that each $S$-orbit of vertices in $W$ (every $S$-orbit of vertices is a subset of $W$ or $V \setminus W$) lies in a single connected component of $(V,E \setminus F)$, which we can verify in \PSPACE along the lines of Lemma \ref{lem:reachability-pspace}.

To see that this suffices, consider an $S$-orbit $\mathcal{O}$, and the orbit of pairs $(u,v) \in \mathcal{O}^2$ whose supports are disjoint modulo $S$, i.e. with $\supp(u) \cap \supp(v) \subseteq S$. The transitive closure of $(V, E \setminus F)$ is $S$-supported, so either all these pairs are edges in that graph or none of them are. In the latter case, clearly we have infinitely many connected components, and in the former, take any $u,v \in \mathcal{O}$ and $w \in \mathcal{O}$ fresh with respect to both of them: we see $(u,w)$ and $(w,v)$ are edges in the transitive closure, so $u,v$ lie in the same connected component. 
\end{proof}

And lastly, we have \probref{MaxCut}, which translated to the problem of whether one of the input sets was infinite. For completeness, however:

\begin{thm} \probref{MaxCut} $\in$ \PSPACE.
\end{thm}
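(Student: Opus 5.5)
The plan is to reduce the question to deciding whether the definable set $F$ is infinite, and then decide that in \PSPACE using Lemma~\ref{lem:pspace-basics}. As noted after Problem~\ref{prob:MaxCut}, an instance is positive if and only if $F$ is infinite. This equivalence needs a short justification. If $F$ is finite, clearly no cut contains infinitely many edges of $F$. Conversely, suppose $F$ is infinite. Pick infinitely many pairwise disjoint edges in $F$. This is possible because each vertex lies in an edge of $F$, and an orbit-finite infinite set of edges either contains infinitely many disjoint ones or has a vertex of infinite degree. In the first case, put one endpoint of each chosen edge in $W$ and the other outside $W$. In the second case, take that vertex $v$ with infinitely many $F$-neighbours, and let $W=\{v\}$. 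In either case the cut contains infinitely many edges of $F$.

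For the algorithm, I would compute a finite support $S$ of $(G,F)$ by listing the atoms appearing in the input expression; this takes linear time. Then I would use Lemma~\ref{lem:pspace-basics}(d) to list the $S$-orbits of $F$ one representative at a time. The key observation is that $F$ is infinite if and only if some $S$-orbit in $F$ is infinite. An $S$-orbit of an element $e$ is infinite exactly when it is not a singleton, which holds exactly when $S$ does not support $e$. So for each listed representative $e$, I test with Lemma~\ref{lem:pspace-basics}(e) whether $S$ supports $e$. The algorithm accepts if some representative is not $S$-supported. Alternatively, using (c) and (b), it can check whether the $S$-orbit of $e$ differs from $\{e\}$.

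The step needing the most care is the claim that a non-singleton $S$-orbit is infinite. This holds over equality atoms (and ordered atoms) because $S$-automorphisms can move the atoms of $\supp(e)\setminus S$ to infinitely many fresh positions, and distinct placements give distinct elements since $\supp(e)$ is the least support.

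For the resource bound, enumerating orbits uses only polynomial space per representative. Each test is in \PSPACE, and the tests reuse space from one representative to the next, so the whole procedure is in \PSPACE.
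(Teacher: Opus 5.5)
Your proposal is correct and is essentially the paper's proof: you reduce to deciding whether $F$ is infinite, i.e.\ whether some orbit of $F$ under a support of $F$ has more than one element, and check this with the \PSPACE{} operations of Lemma~\ref{lem:pspace-basics} (using the syntactic support instead of $\supp(F)$ is harmless). Your justification that the instance is positive iff $F$ is infinite, which the paper leaves as ``easy to see'', is also fine, although the clause ``each vertex lies in an edge of $F$'' is false in general and not needed, since the dichotomy ``infinite matching or a vertex of infinite degree'' holds for every infinite graph by a greedy argument.
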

\begin{proof} We must simply check whether $F$, the set of edges in the input, is infinite, i.e. contains a $\supp(F)$-orbit of size at least two. This may be done in \PSPACE.
\end{proof}

An interesting distinction emerges here with the `$k$-' variants of the above problems. We dismissed most of these earlier as uninteresting, since (with the exception of $k$-\probref{Colorability} and $k$-\probref{CliqueCover}) their decidability is immediate from basic principles. 
From a complexity perspective, however, the story is a little different. Take $k$-\probref{VertexCover} as an example. We have membership in \NEXP by the following obvious algorithm:

\begin{thm}\label{thm:pspace-last} $k$-\probref{VertexCover} $\in$ \NEXP.
\end{thm}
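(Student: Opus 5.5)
The plan is the obvious guess-and-check algorithm. The instance consists of a definable graph $G=(V,E)$ given by set-builder expressions, together with a number $k$. The question is whether $G$ has a vertex cover of size at most $k$.

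First I record a structural fact. Any vertex cover $C$ of size at most $k$ is a set of at most $k$ vertices. Each of these vertices is given by an expression of polynomial size applied to a tuple of atoms. Fix a support $S$ of $G$, which can be read off syntactically, so $|S|$ is polynomial. Only the $S$-orbit of the tuple $(v_1,\ldots,v_k)$ matters for whether $C$ is a cover: if $\pi$ is an $S$-automorphism, then $C\cdot\pi$ is a cover of $G\cdot\pi=G$ exactly when $C$ is a cover of $G$. The number of atoms involved is at most $|S|$ plus $k$ times the polynomially bounded support size of a vertex. Here $k$ is given in binary, so this is exponential in the input size. Hence a representative of the orbit can be chosen using exponentially many concrete atoms, for instance rationals with polynomially many bits each, or integers $1,\dots,N$ with $N$ exponential.

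The algorithm proceeds as follows. Nondeterministically guess $k'\le k$. Then guess $k'$ vertices: for each vertex, guess one of the (polynomially many) set-builder components of $V$ and a valuation of its bound variables by concrete atoms. Check membership in $V$ using Lemma~\ref{lem:pspace-basics}(b). Each check is polynomial-space in a polynomial-size object, so it takes exponential time overall, and there are exponentially many of them. Next verify the cover property. Let $S'$ be $S$ together with all atoms in the guessed vertices; it has exponential size. Check that every edge $(x,y)\in E$ has $x\in C$ or $y\in C$. To do this, list the $S'$-orbits of $E$ via Lemma~\ref{lem:pspace-basics}(d) and test each representative. The property ``$x\in C$ or $y\in C$'' is $S'$-invariant, since $C$ is $S'$-supported, so testing representatives suffices.

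The main obstacle is the complexity accounting in this last step. Lemma~\ref{lem:pspace-basics} is stated for polynomial-size inputs, but here $S'$ is exponential. So I would not apply the lemma to $C$ as a whole. Instead I would exploit the fact that a single edge has polynomial-size support. An $S'$-orbit of an edge is determined by the order type of its polynomially many bound atoms relative to $S'$. There are at most $(|S'|+1)^{\mathrm{poly}}$, i.e.\ exponentially many, such order types. For each one, pick a concrete representative edge and compare it against each of the $k'$ guessed vertices with a PSPACE equality test on polynomial-size expressions. This totals exponential time, which places $k$-\probref{VertexCover} in \NEXP. Only an upper bound is claimed here, and it is the coarse one.
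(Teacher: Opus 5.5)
Your proposal is correct and is essentially the paper's argument. The paper likewise uses the renaming argument to restrict to covers supported by a fixed set of $kd$ atoms ($d$ the dimension of $G$), enumerates the exponentially many such vertices, guesses $k$ of them and verifies. Your explicit exponential-time check, enumerating equality types of edge valuations relative to the exponential-size set $S'$, fills in the verification step that the paper leaves implicit.
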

\begin{proof} Let $G$ be an equivariant graph, and let $d$ be the maximum size of the least support of a vertex of $G$ (also known as the dimension of $G$). Let $S$ be a set of $kd$ fresh atoms. Note that $G$ contains a vertex cover of size $k$ exactly if it contains an $S$-supported one.

Yet we may list the $S$-supported elements in $G$ in exponential time\footnote{\ To see this from Lemma \ref{lem:basic-complexity-results}, note each $S$-supported element has some support $T \subseteq S$ of size at most $d$. In particular, we may iterate through subsets of $S$ of size $d$ (there are at most exponentially many in the size of the input), and for each list the elements it supports in exponential time, before removing duplicates, still in exponential time.}, and nondeterministically guess $k$ of them, then verify that they form a vertex cover.
\end{proof}

(There is nothing special about $k$-\probref{VertexCover} here; it is easy to come up with similar \NEXP algorithms for other `$k$-bounded' versions of our problems.)

This is worse than the \PSPACE bound we gave for \probref{VertexCover}. In fact, we will shortly show both problems are hard for these classes.

\subsection{Lower bounds}

It is easy to see that all our problems are \PSPACE-hard. This is because all the operations from Lemma~\ref{lem:pspace-basics} are \PSPACE-hard: for emptiness checking this follows from \PSPACE-hardness of satisfiability for the first-order theory of equality, and the other parts easily reduce to that. As a result, Theorems~\ref{thm:many-pspace}-\ref{thm:pspace-last} settle the complexity of all our decidable problems except \probref{3-Sat} and the $k$-bounded versions mentioned above.

To understand these, let us begin by recapping \emph{succinct problems}, whose study was initiated in \cite{Galp84}.

The idea (following \cite{LB90, BLT92}) is as follows: given a decision problem $A$, whose inputs we see (via a suitable encoding) as binary strings, the `\emph{succinct}' problem $sA$ takes as input \emph{succinct representations} of those binary strings. An instance of $sA$ is positive precisely if the binary string it represents is a positive instance of $A$.

A succinct representation of a binary string $x$ is a Boolean circuit, which on input the binary representation of $i \in \mathbb{Z}^+$ should output (i) whether $i \leq \lvert x \rvert$ (the length of $x$) and (ii) whether the $i$th bit of $x$ is a $1$. If a string is highly regular, its succinct representation can be substantially smaller.

See \cite{PY86,LB90,BLT92,Helmut98} for more on the general theory of succinct problems. For our purposes it is enough to know that the succinct versions of most \NP-complete problems are \NEXP-complete. In particular:

\begin{thm} The succinct versions of the classical problems \probname{3-Sat} and \probname{VertexCover} are \NEXP-hard.
\end{thm}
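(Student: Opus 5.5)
The plan is to follow the standard route of Papadimitriou and Yannakakis~\cite{PY86}, as refined by Lozano and Balc\'azar~\cite{LB90}. First we show that succinct \probname{3-Sat} is \NEXP-hard. Then we obtain succinct \probname{VertexCover} by observing that Karp's classical reduction from \probname{3-Sat} is simple enough to be carried out on succinct representations.

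For \probname{3-Sat}, fix any $L\in\NEXP$, decided by a nondeterministic machine $M$ in time $2^{n^c}$. Given an input $w$, the Cook--Levin tableau of $M$ on $w$ is a $2^{n^c}\times 2^{n^c}$ grid. The Cook--Levin construction produces a 3-CNF formula $\varphi_w$ whose variables are indexed by (cell, symbol/state) triples $(i,j,s)$. The essential point is that this formula is \emph{locally uniform}. A clause is determined by a pair $(i,j)$ of cell coordinates together with a constant-size pattern. Its literals are obtained from $(i,j)$ by adding or subtracting $1$ to the binary coordinates, plus boundary tests involving $w$, which affect only row $0$ and positions $j<|w|$. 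We then write a Boolean circuit $C_w$ of size polynomial in $|w|$ that, given the binary index of a bit of the encoding of $\varphi_w$, decodes which clause and which literal position it refers to and computes that bit. This uses incrementer/decrementer circuits on $n^c$-bit numbers, comparators for the boundary cases, and a hardwired table of the constant-size transition clauses of $M$. Fixing a suitably regular encoding, with clauses stored in fixed-width blocks and every clause padded to exactly $3$ literals, makes the decoding of the bit index a simple bit-slicing. The map $w\mapsto C_w$ is polynomial time, and $w\in L$ iff $\varphi_w$ is satisfiable, which gives \NEXP-hardness.

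For \probname{VertexCover} we apply Karp's reduction: one triangle per clause, one edge per variable between its two literals, connecting edges from triangle corners to the corresponding literal vertices, and the bound $k=n+2m$. The adjacency relation of the resulting graph is computed locally from the clause list, since whether two vertex indices are adjacent depends only on a constant number of bits of the formula. Given a circuit $C$ for $\varphi$, we therefore build a polynomial-size circuit for the adjacency matrix, or edge list, of the graph that queries copies of $C$. The numbers $n$ and $m$, and hence $k$, are computable from the index ranges and fit into polynomially many bits. Composing the two reductions gives \NEXP-hardness of succinct \probname{VertexCover}.

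The main obstacle is the bookkeeping in the first step: choosing an encoding of CNF formulas as bit strings in which the bit-index-to-clause-literal map is computable by small circuits, and handling the boundary rows and columns and the input $w$ uniformly. I would handle this by padding everything to powers of two, so that indices split into fixed bit fields. Alternatively, I would simply cite~\cite{PY86,LB90}, whose conversion lemma states that any polynomial-time reduction computable by projections carries \NP-hardness over to \NEXP-hardness of the succinct versions. This would reduce the whole argument to checking that the Cook--Levin and Karp reductions are projections.
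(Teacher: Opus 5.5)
Your proposal is correct and follows essentially the paper's route. The paper also just invokes the framework of~\cite{PY86,LB90,BLT92}, either checking that standard reductions are weak enough (projections, or more generally polylog-time reductions) or reducing directly between the succinct versions, and your Cook--Levin circuit for succinct \probname{3-Sat} followed by the gadget reduction to succinct \probname{VertexCover} spells out that argument in more detail; one harmless slip is that, with your fixed-width encoding, adjacency depends on a constant number of \emph{literals} (so polynomially many bits and copies of $C$), not a constant number of bits, which still gives a polynomial-size circuit.
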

\begin{proof} The techniques of \cite{PY86, LB90, BLT92} apply to most \NP-hard problems. To verify that they apply to \probname{3-Sat} and \probname{VertexCover}, it suffices to check standard reductions from appropriate formulations of e.g. \probname{Satisfiability}, or \probname{\mbox{$3$}-Colorability}, and verify that these lie within some sufficiently weak class\footnote{\ The most general being poly-logarithmic time reductions (see~\cite{Helmut98, EGM97}).}. This is easy to do. Alternatively, we could directly reduce between the succinct versions; this is also not difficult.
\end{proof}

Finally, we reduce these succinct problems to our corresponding definable ones. It is interesting to note that in fact the reduction goes only through finite instances. 
Here again, there is nothing special about \probname{VertexCover}; very similar reductions apply to most of the $k$-bounded variants of our problems.

\begin{thm}\label{thm:nexphard} \probref{3-Sat} and $k$-\probref{VertexCover} are \NEXP-hard.
\end{thm}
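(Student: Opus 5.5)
We reduce in polynomial time from the succinct versions of \probname{3-Sat} and \probname{VertexCover}, which are \NEXP-hard by the preceding theorem. As remarked above, the reduction produces only \emph{finite} definable instances: a succinct instance describes an exponentially large finite structure, and we encode it by a polynomial-size set-builder expression whose value is isomorphic to that structure.

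\textbf{Encoding of indices.} Fix two distinct atoms $0,1\in\atoms$; they are part of the valuation, so instances are $\{0,1\}$-definable. An $n$-bit index $i$ is represented by a tuple $(x_1,\ldots,x_n)$ of atoms with the constraint $\bigwedge_j (x_j=0\lor x_j=1)$. Such tuples range over exactly $2^n$ values, and with right-associative bracketing each has representation size linear in $n$.

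\textbf{Encoding the circuit.} A Boolean circuit $C$ with $m$ gates becomes a first-order formula over equality with constants $0,1$. Existentially quantify one atom $g_k$ per gate, require $g_k\in\{0,1\}$, and for a gate such as an AND of $g_p,g_q$ write
\[
g_k=1\leftrightarrow(g_p=1\land g_q=1).
\]
The output is read off from the output gate. This gives a formula $\psi_C(\bar{x},b)$, of size polynomial in $|C|$, expressing that $C$ on input $\bar{x}$ outputs $b$. Using it, we write
\begin{itemize}
\item the set of vertices as $\{\bar{x}\mid \bar{x}\in\atoms,\ \bar{x}\in\{0,1\}^n\land \text{``}\bar{x}\text{ is a valid vertex index''}\}$, and
\item the set of edges as $\{\{\bar{x},\bar{y}\}\mid \ldots,\ \psi_C(\bar{x}\bar{y},1)\}$.
\end{itemize}
We assume the standard succinct encoding where an adjacency-matrix bit is indexed by the pair. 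Then the defined graph is isomorphic to the succinctly represented graph. The number $k$ for $k$-\probref{VertexCover} would be exponential, and the first subtlety is how to supply it; I expect $k$ to be given in binary, which has polynomial length. If instead it must be a small parameter, the right source problem is a succinct variant where the bound is encoded accordingly, and I would check which formulation the preceding theorem supports.

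For \probref{3-Sat}, proceed likewise. Variables are indexed by $\{0,1\}^n$-tuples. A clause is a triple of signed variable indices, $\{((\bar{x}_1,s_1),(\bar{x}_2,s_2),(\bar{x}_3,s_3))\mid \ldots\}$, and the circuit formula certifies that this triple is a clause of the encoded formula. Here I would take the succinct representation to be a circuit deciding clause membership for such triples; this is the standard form in \cite{PY86}, and otherwise it can be converted in polynomial time.

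The main obstacle is the bookkeeping needed to ensure the defined set is \emph{literally} the finite structure, not a distorted copy. There are three points to settle:
\begin{itemize}
\item duplicate representations must be avoided, since tuples over $\{0,1\}$ are canonical;
\item ``$i\le |x|$'' must be handled via the circuit's first output;
\item the size blow-up must stay polynomial, via right-associative tuples and a linear-size circuit formula.
\end{itemize}
Once the structure is isomorphic, the problem's answer is preserved trivially, completing both reductions.
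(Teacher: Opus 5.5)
Your proof is correct and has the same overall shape as the paper's. Both reduce from the succinct problems, turn each circuit into an existentially quantified first-order formula over equality with one quantified item per gate, and write a polynomial-size expression whose value is a finite structure isomorphic to the succinctly described one. Like you, the paper takes $k$ in binary, so you can drop the hedge about $k$ being a small parameter.

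Where you differ is the encoding of bits.
\begin{itemize}
\item You fix two distinct atoms $0,1$ in the valuation and let a vertex literally be a tuple over $\{0,1\}$. Each bit string then has a unique representative, and gate values are just atoms constrained to $\{0,1\}$.
\item The paper uses no constants. A bit $v_i$ is encoded as whether $a_i=b_i$ in a pair of tuples $(\bar a,\bar b)$ whose atoms are otherwise pairwise distinct (the formula $\zeta$). Gates become equalities $z_j=z'_j$ between fresh existentially quantified atoms. Since many pairs $(\bar a,\bar b)$ encode the same string, a vertex is the whole orbit of $(\bar a,\bar b)$, cut out by a formula $\xi$ expressing ``same orbit''.
\end{itemize}

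Your version is simpler: it needs neither $\zeta$, $\xi$, nor the quotient by orbits. The paper's extra work buys an \emph{equivariant} instance (empty valuation), so it shows \NEXP-hardness already for equivariant inputs, whereas yours produces $\{0,1\}$-definable instances. Section~\ref{sec:complexity} represents inputs as an expression together with a valuation on its free variables, so the problems accept arbitrary definable inputs. Your reduction therefore suffices for the theorem as stated.
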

\begin{proof} By reduction from the corresponding succinct problems. We argue for $k$-\probref{VertexCover}; \probref{3-Sat} is similar.

Take as input an instance of the succinct version of \probname{VertexCover}. We may assume\footnote{\ We defined the succinct problem slightly differently, but may produce the items above in polynomial time.} this consists of:
\begin{itemize}
\item a set of vertices, represented as a Boolean circuit $\mathcal{C}_V$ with $n$ input gates,
\item a set of edges, represented as Boolean circuit $\mathcal{C}_E$ with $2n$ input gates,
\item a number $k$, represented in binary.
\end{itemize}
The first two components encode a graph $G = (V,E)$ with vertices $V\subseteq \{0,1\}^n$. The content of our reduction will be to construct, in polynomial time, a definable graph isomorphic to $G$. 

First, we transform circuits into formulas: in polynomial time produce from $\mathcal{C}_V$ an existentially quantified Boolean formula 
\[
\varphi_V = \exists q_1 \cdots \exists q_m. \chi_V(p_1, \ldots, p_n, q_1, \ldots, q_m)
\]
(with $\chi_V$ quantifier-free), with $n$ free propositional variables $p_1, \ldots, p_n$ (and variables $q_i$ corresponding to the gates of $\mathcal{C}_V$), such that $\varphi_V$ holds exactly when $\mathcal{C}_V$ returns $1$. Similarly we can produce a formula $\varphi_E$, with $2n$ free variables, from $\mathcal{C}_E$.

Next, we define the vertex set of our graph. The general idea is that a binary sequence $v\in\{0,1\}^n$ can be encoded as a pair of tuples of atoms $(\bar{a},\bar{b})$, both of length $n$, where all the atoms are pairwise distinct except that $a_i=b_i$ if $v_i=1$. In fact, the {\em orbit} of $(\bar{a},\bar{b})$ fully describes the sequence $v$, and so a set of sequences, such as the one defined by the formula $\varphi_V$, will be encoded as a set of orbits.

To this end, first define a formula $\zeta(\bar{x},\bar{y})$, over the language of equality, with $2n$ atom variables, which says that all the values are pairwise distinct except perhaps $x_i$ and $y_i$ for any $i$. This is simply a conjunction of quadratically many inequalities. Then replace the propositional variables in $\phi_V$ with equalities between atom variables like so:
\[
\psi_V(\bar{x},\bar{y}) = \exists z_1\exists z'_1 \cdots \exists z_m\exists z'_m. \chi_V(x_1=y_1,\, \ldots, x_n=y_n,\, z_1=z'_1,\, \ldots, z_m=z'_m).
\]
This is a formula over the language of equality, with $2n$ free variables. Then the conjunction $\zeta(\bar{x},\bar{y})\land\psi_V(\bar{x},\bar{y})$ holds for those $(\bar{a},\bar{b})\in(\atoms^n)^2$ which encode binary sequences for which $\phi_V$ holds.

We now want to quotient this subset of $(\atoms^n)^2$ by the relation of being in the same orbit. For this, it is easy to write a formula $\xi(\bar{x},\bar{y},\bar{x}',\bar{y}')$, with $4n$ variables, such that $\xi(\bar{a},\bar{b},\bar{a}',\bar{b}')$ holds exactly when $(\bar{a},\bar{b})$ and $(\bar{a}',\bar{b}')$ are in the same equivariant orbit of $(\atoms^{n})^2$. This formula is a conjunction of quadratically many equivalences between atom equalities.

We can now express the set of vertices of our definable graph as:
\[
	\big\{ \{ (\bar{x}',\bar{y}') \mid \bar{x}',\bar{y}'\in\atoms^n,\ \xi(\bar{x},\bar{y},\bar{x}',\bar{y}') \} \mid \bar{x},\bar{y}\in\atoms^n,\ \zeta(\bar{x},\bar{y})\land\psi_V(\bar{x},\bar{y}) \big\}.
\]
The expression for the set of edges is similar, with the formula $\phi_V$ replaced by $\phi_E$ and the arities of atom tuples changed accordingly. This produces a definable graph isomorphic to $G$, in time polynomial with respect to the size of the circuits $\mathcal{C}_V$ and $\mathcal{C}_E$.
\end{proof}

This settles the complexity of the decidable versions of Karp's problems on definable instances, under our chosen representation of inputs. It should be said that this picture is rather rough: the \PSPACE-hardness of determining emptiness and equality overshadows any potential finer complexity considerations. One way to avoid this problem is to fix, as a parameter, the maximal size of the least support of any component of the input. This approach has been proposed in~\cite{BT18} to describe the class of `fixed-dimension polynomial time'. We leave it for future work to determine which of Karp's problems, under various input representations, become fixed-dimension \NP.


\section{Conclusion}

We chose to consider Karp's 21 problems simply because they are famous and well-studied. But, in a sense, the choice was principled: by fixing the set in advance, we forced our hand to look at a whole range of possible properties, in the definable setting, of problems which all behave similarly over finite structures. So it is interesting how different these properties turned out to be: some problems (e.g.~\probref{ExactCover}) are undecidable for nontrivial reasons, some (e.g.~\probref{3-Sat}) are decidable by easy algorithms which are correct for deep reasons, some (e.g.~\probref{SetCovering}) are decidable by algorithms which are correct for mundane reasons, and some (e.g.~\probname{Knapsack}) do not seem to generalise to the definable setting at all. 

We may project this picture onto the tree of reductions that Karp originally used for proving~\NP-hardness of the finite versions of these problems. We reproduce that tree in Figure~\ref{fig:karps-tree}, where we mark the problems decidable on definable structures in green, and the undecidable ones in red. The region of weighted problems, whose definable reformulations either are missing or look rather degenerate (see Section~\ref{sec:weighted}), is grayed out. 

\begin{figure}
\centering
\[\resizebox{\textwidth}{!}{\xymatrix{
& & \probref[Red]{CNFSat}\ar[ld]\ar[d]\ar[rd] \\
& \probref[Green]{Clique}\ar[d] & \probref[Red]{0-1-ILP} & \probref[Green]{3-Sat}\ar[d] \\
& \probref[Green]{VertexCover}\ar[ldd]\ar[ld]\ar[d]\ar[rd] & & \probref[Green]{Colorability} \ar[d]\ar[rd] \\
\hyperref[prob:FeedbackVertexSet]{\txt{\probname[Green]{Feedback}\\ \probname[Green]{VertexSet}}} &
\hyperref[prob:DirectedHamiltonicity]{\txt{\probname[Red]{Directed}\\ \probname[Red]{Hamiltonicity}}}\ar[d] &
\probref[Green]{SetCovering} & \probref[Red]{ExactCover}\ar[ldd]\ar[ld]\ar[ld]\ar[d]\ar[rd] &
\hyperref[prob:CliqueCover]{\txt{\probname[Green]{Clique}\\ \probname[Green]{Cover}}} \\
\hyperref[prob:FeedbackArcSet]{\txt{\probname[Green]{Feedback}\\ \probname[Green]{ArcSet}}} &
\hyperref[prob:UndirectedHamiltonicity]{\txt{\probname[Red]{Undirected}\\ \probname[Red]{Hamiltonicity}}} &
\probref[Red]{HittingSet} 
& \probname[lightgray]{Knapsack}\ar[d]\ar[rd] & 
\hyperref[prob:SteinerTree]{\txt{\probname[Green!50]{Steiner}\\ \probname[Green!50]{Tree}}}\\
& & \probref[Red]{3DMatching} & \probname[lightgray]{Sequencing} & \probname[lightgray]{Partition}\ar[d]\\
& & & & \probref[Green!50]{MaxCut}
\save "5,4"+<-35pt,12pt>."7,5"*+<3pt,3pt>[F.:<3pt>]\frm{} \restore
\save "7,5"+<-95pt,-14pt>*\txt{\color{Gray}{\tiny weighted problems}} \restore
}}\]
\caption{Karp's tree of reductions~\cite[Fig.~1]{Karp72}, annotated}
\label{fig:karps-tree}
\end{figure}

It is interesting to see how the problems that become undecidable on definable structures are scattered over the tree. Certainly Karp could have chosen different a different structure of reductions, but it is natural to think that the ones used in~\cite{Karp72} and shown in Figure~\ref{fig:karps-tree} are some of the first, and perhaps the simplest, that come to mind. It is remarkable, then, how few of them remain in force in the definable setting: we were able to reuse a couple of them with no change (e.g.~\probref{ExactCover} to~\probref{HittingSet} in Theorem~\ref{thm:exactcover-hittingset}), but several others (e.g.~\probref{CNFSat} to~\probref{3-Sat}) break down in fundamental ways. Even for those problems that turned out to be undecidable, we felt compelled to create a very different tree of reductions, shown in Figure~\ref{fig:our-tree}.

\begin{figure}
\centering
\[\resizebox{\textwidth}{!}{\xymatrix{
& & \probref[Red]{CNFSat} \\
& \phantom{\probref[Green]{Clique}} & \probref[Red]{0-1-ILP} & \phantom{\probref[Green]{3-Sat}} \\
& \phantom{\probref[Green]{VertexCover}} & & \phantom{\probref[Green]{Colorability}} \\
\phantom{\hyperref[prob:FeedbackVertexSet]{\txt{\probname[Green]{Feedback}\\ \probname[Green]{VertexSet}}}} &
\hyperref[prob:DirectedHamiltonicity]{\txt{\probname[Red]{Directed}\\ \probname[Red]{Hamiltonicity}}}\ar[d]_{\text{Thm.~\ref{thm:undecidability-undirected-hamiltonicity}}} &
\phantom{\probref[Green]{SetCovering}} & \probref[Red]{ExactCover}\ar[ldd]^{\text{Cor~\ref{coro:exact-tuple-cover}, Thm.~\ref{thm:3dmatching}}}\ar[ld]_{\text{Thm~\ref{thm:exactcover-hittingset}}}\ar@/_2pc/[ll]_(.2){\text{\qquad Cor~\ref{coro:exact-tuple-cover}, Thm.~\ref{thm:undecidability-directed-hamiltonicity}}} &
\phantom{\hyperref[prob:CliqueCover]{\txt{\probname[Green]{Clique}\\ \probname[Green]{Cover}}}} \\
\phantom{\hyperref[prob:FeedbackArcSet]{\txt{\probname[Green]{Feedback}\\ \probname[Green]{ArcSet}}}} &
\hyperref[prob:UndirectedHamiltonicity]{\txt{\probname[Red]{Undirected}\\ \probname[Red]{Hamiltonicity}}} &
\probref[Red]{HittingSet}\ar@/^4.5pc/[uuuu]^(.7){\text{Thm.~\ref{thm:undecidability-sat}}}\ar[uuu]_(.8){\text{Thm.~\ref{thm:undecidability-01ILP}}}
& \phantom{\probname[lightgray]{Knapsack}}& 
\phantom{\hyperref[prob:SteinerTree]{\txt{\probname[Green!50]{Steiner}\\ \probname[Green!50]{Tree}}}}\\
& & \probref[Red]{3DMatching} & \phantom{\probname[lightgray]{Sequencing}} & \phantom{\probname[lightgray]{Partition}}
}}\]
\caption{Our tree of reductions for the undecidable problems}
\label{fig:our-tree}
\end{figure}

Together with the analysis of decidable problems in Sections~\ref{sec:amenability}-\ref{sec:other}, and their complexity settled in Section~\ref{sec:complexity}, this paints quite an interesting landscape, and shows that we now have a small arsenal of techniques to analyse simple computational problems in the definable setting.

\subsection*{Acknowledgments} We are grateful to M.~Boja\'{n}czyk, A.~Ghosh, P.~Hofman, S.~Lasota and A.~Mylet for useful discussions and comments.

\bibliographystyle{alphaurl}
\bibliography{bib}

\end{document}